\documentclass{amsart}
\usepackage{graphicx}
\usepackage{amssymb}
\usepackage[hyphens]{url} 
\usepackage{amsmath}
\usepackage{cite}
\usepackage{mathrsfs}
\usepackage{hyperref}
\usepackage{amsthm}
\usepackage{tikz-cd}
\usepackage{caption}
\usepackage{float}
\usepackage[utf8]{inputenc}
\usepackage{graphicx}
\usepackage{stmaryrd}
\usepackage{tikz}
\usepackage{tikz-cd} 
\usepackage[all]{xy}
\usepackage{comment}
\usepackage{bm}
\usepackage{comment}
\usepackage{amsmath,amsfonts,amssymb}
\usepackage{tcolorbox}

\newcommand{\Z}{\mathbb{Z}}
\newcommand{\vs}{\vspace{10pt}}
\DeclareMathOperator{\Aut}{Aut}
\DeclareMathOperator{\Ad}{Ad}

\usepackage[a4paper]{geometry}
\usepackage{enumitem}
\usepackage{xcolor}
\usepackage{nicefrac}
\hypersetup{
	colorlinks=true,
	linkcolor={blue},
	citecolor={blue},
	urlcolor={blue}}

\usepackage{cleveref}

\theoremstyle{plain}

\newtheorem{thm}{Theorem}[section]
\newtheorem{prop}[thm]{Proposition}

\newtheorem{claim}[thm]{Claim}
\newtheorem{qu}[thm]{Question}
\newtheorem{lemma}[thm]{Lemma}

\theoremstyle{definition}

\newtheorem{defn}[thm]{Definition}

\newtheorem{ex}[thm]{Example}

\theoremstyle{remark}
\newtheorem{rmk}[thm]{Remark}
\newlength{\plarg}
\usetikzlibrary{cd}

\title[Conjugacy Problem for Dehn Twists of Free Products of Free Abelian Groups]{Conjugacy Problem for Dehn Twists of Free Products of Finitely Generated Free Abelian Groups}

\author{Amir Y. Weiss Behar \and Chris Karpinski \and Bratati Som}

\begin{document}
\maketitle
\begin{abstract}
     We show solubility of the conjugacy problem for Dehn twist automorphisms of finitely generated free products of free abelian groups.
\end{abstract}

\section{Introduction}

Two automorphisms $\phi_1, \phi_2$ of a group $G$ are said to be \emph{conjugate} if there exists an automorphism $\psi$ of $G$ such that $\phi_2 = \psi \phi_1 \psi^{-1}$. The \emph{conjugacy problem} for automorphisms of a group $G$ is the problem of \emph{deciding}, given a presentation for $G$, whether or not given automorphisms $\phi_1, \phi_2$ of $G$ are conjugate. By deciding, we mean finding an algorithm that takes as input a given presentation of $G$ and given automorphisms $\phi_1, \phi_2$ of $G$ and outputs ``yes'' or ``no'' depending on whether or not $\phi_1, \phi_2$ are conjugate. 

In this paper, we study the conjugacy problem for \emph{Dehn twist} automorphisms of free products of free abelian groups. Dehn twist automorphisms of a group $G$ are algebraic generalizations of Dehn twist homeomorphisms of a surface, and are defined in terms of splittings of $G$ either as an amalgamated free product (these are the \emph{separating} Dehn twists) or splittings of $G$ as an HNN extension (these are the \emph{non-separating} Dehn twists). See Section \ref{section: Dehn twists} for the definition of Dehn twists. The conjugacy problem for Dehn twists of finitely generated free groups was solved in \cite{Cohen_Lustig} (where a more general definition of Dehn twists was considered). We generalize the results of \cite{Cohen_Lustig} from free groups to the broader class of finitely generated free products of free abelian groups. 

\begin{thm}
    \label{thm: main theorem}
    Let $G$ be a free product of finitely many finitely generated free abelian groups. There is an algorithm that, given a finite presentation of $G$ and two Dehn twist automorphisms $\delta_1, \delta_2$ of $G$, decides whether or not $\delta_1$ and $\delta_2$ are conjugate in $\Aut(G)$. 
\end{thm}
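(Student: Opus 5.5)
Following the approach of Cohen--Lustig for free groups, I would reduce the conjugacy problem for Dehn twists to an isomorphism problem for decorated graphs of groups. Let $G = A_1 * \cdots * A_k * F_r$, with each $A_i \cong \Z^{n_i}$ and $n_i \geq 2$. A Dehn twist $\delta$ is given by a graph of groups $\mathcal{G}$ with fundamental group $G$, together with twistors $z_e \in Z(G_e)$ on the edges. Since edge groups are centralizers of elements here, they are cyclic or free abelian.

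\textbf{Step 1: canonical form.} First I would algorithmically put $\delta$ into an \emph{efficient} form. This means collapsing edges with trivial twistor, folding adjacent edges whose twistors are commensurable at a common vertex, and removing invalence. The aim is a normal form in which the graph of groups $\mathcal{G}_\delta$ is canonically determined by $\delta$ up to isomorphism. I expect this to go through by adapting Cohen--Lustig's efficient Dehn twists: one shows that two efficient representatives of the same outer class differ by an isomorphism of graphs of groups. The new feature is abelian vertex groups, in which an edge group $\Z$ can sit inside $\Z^n$ in many ways. I would handle this by recording, for each vertex $v$, the incident edge groups as subgroups of $G_v$ up to the action of $\Aut$ of the vertex data.

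\textbf{Step 2: reduce conjugacy to isomorphism.} Next I would show that $\delta_1,\delta_2$ are conjugate in $\Aut(G)$ if and only if two conditions hold. First, the efficient graphs of groups are isomorphic via an isomorphism carrying twistors to twistors. Second, a residual inner-automorphism or ``correction'' condition is satisfied: an element of the centralizer of some vertex group must solve an equation up to twists. Since the statement is about $\Aut$ rather than $\mathrm{Out}$, I would first decide conjugacy in $\mathrm{Out}(G)$. I would then handle the lift by treating the centralizer $C(\delta_1)$ acting on the cosets of inner automorphisms; this reduces to the conjugacy problem in $G$ plus a twisted-conjugacy check in the vertex stabilizers.

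\textbf{Step 3: decide isomorphism.} Graphs of groups with finitely many edges can be compared by enumerating graph isomorphisms. At each vertex, the existence of an automorphism of $G_v$ mapping one family of edge subgroups and twistors to the other splits into two cases:
\begin{itemize}
\item if $G_v$ is free, this is Whitehead-type and handled as in Cohen--Lustig;
\item if $G_v \cong \Z^n$, it is an orbit problem for $GL_n(\Z)$ acting on tuples of vectors, together with the attaching elements modulo the edge groups. This is decidable by lattice and linear-algebra methods (Smith normal form, or Grunewald--Segal arithmetic orbit decidability).
\end{itemize}

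The main obstacle, I expect, is the coupling of local choices through the edges. Edge isomorphisms allow modification by elements of vertex groups, and these corrections must be consistent around cycles in the graph. This produces a system of equations in abelian groups over the cycle space. I would first try to show that this system is linear over $\Z$, so that it is decidable by integer linear algebra; this should work because all the relevant centralizers are free abelian.
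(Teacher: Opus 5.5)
Your plan has genuine gaps, and they sit at the steps you leave as expectations. You are also attacking arbitrary multi-edge twists, which is more than the theorem asks and which the paper leaves open.

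\textbf{Step~1.} Your claim that edge groups ``are centralizers of elements here'' is false. Moreover, uniqueness of efficient representatives fails for the moves you list. Take the paper's example $G=\Z^2*\Z=(\langle a,b\rangle*\langle d\rangle)*_{\langle d\rangle^e=\langle b\rangle}$ with $\delta(e)=ed$. The edge group $\langle b\rangle$ is not $C_G(b)=\langle a,b\rangle$. Also $e^{-1}ae$ commutes with $d=e^{-1}be$, so it is fixed by $\delta$. Hence the same $\delta$ is a twist of $H*_{D^e=C}$ with $H=\mathrm{Fix}(\delta)=\langle a,b\rangle*e^{-1}\langle a,b\rangle e\cong\Z^2*\Z^2$, $D=e^{-1}\langle a,b\rangle e$ and $C=\langle a,b\rangle$. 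Neither splitting admits a collapse, a fold, or an invalence removal, yet the vertex groups $\Z^2*\Z$ and $\Z^2*\Z^2$ are not isomorphic. The missing operation is enlarging the edge group to its centralizer and the vertex group to $\mathrm{Fix}(\delta)$. Without it, the induced splitting of $\Gamma_\delta$ is not $k$-acylindrical for any $k$ (Claim~\ref{claim: non-acylindrical mapping torus}), so no canonicity argument applies.

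\textbf{Step~3.} The dichotomy ``$G_v$ free or $\Z^n$'' omits the generic vertex groups such as $\Z^2*\Z$; these need Lemma~\ref{lemma: simultaneous whitehead algorithm}. The cross-edge coupling, which you call the main obstacle, is left as a hope that it is linear over $\Z$. It is not obviously so: matching vertex automorphisms on a rank-$m$ edge group is a constraint in $\mathrm{GL}_m(\Z)$. You would also need to know which elements of $\mathrm{GL}_m(\Z)$ extend over each vertex group while preserving the other incident edge groups.

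\textbf{Step~2.} Deciding whether $\Ad(g)\delta_2$ is $\Aut$-conjugate to $\delta_2$ is an orbit problem. The group acting is the full preimage of $C_{\mathrm{Out}}([\delta_2])$, acting by twisted conjugation. Solving it requires generators for that centralizer, which you do not provide; the conjugacy problem in $G$ alone does not suffice.

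The idea that avoids all of this is the mapping torus. By Lemma~\ref{lem:mapping_tori_iso_implies_conjuguate}, $\Aut$-conjugacy is equivalent to an isomorphism $\Gamma_\delta\to\Gamma_{\delta'}$ that preserves $G$ and sends $t\mapsto t'$, so no $\mathrm{Out}$-to-$\Aut$ lift is needed. First enlarge the splitting so that $A=\mathrm{Fix}(\delta)$ and the edge group is maximal abelian. Then the relevant tree is $2$-acylindrical:
\begin{itemize}
\item in the separating case, the Bass--Serre tree of $\tilde A*_{\tilde C}\tilde B$;
\item in the non-separating case, the tree of cylinders of $\tilde A*_{\tilde D^e=\tilde C}$.
\end{itemize}
This tree is characterized up to equivariant isomorphism by $2$-acylindricity and its stabilizer types. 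Therefore any such isomorphism can be conjugated to carry vertex groups to vertex groups, and $t$ is recovered as the centre of $\tilde A=A\oplus\langle t\rangle$. Conjugacy then reduces to isomorphisms of $A$ (and $B$) sending $c_0\mapsto c_0'$, or $w_1\mapsto w_1'$ compatibly with $\sigma$.

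With a single edge there is no coupling. A non-cyclic maximal abelian edge group is a free factor of the vertex group, so any isomorphism of edge groups extends. In the cyclic case, one simultaneous Whitehead problem decides. Three further points must be handled separately: the inner case, the case of abelian $G$ (a $\gcd$ test), and the fact that a non-trivial twist is never both separating and non-separating.
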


The main technique employed in our paper is the study of the \emph{mapping torus} of an automorphism $\phi$, which is the semi-direct product $\Gamma_{\phi} := G \rtimes_{\phi} \Z = \langle G, t \vert tgt^{-1} = \phi(g) \text{ for all } g \in G \rangle$. Given a Dehn twist automorphism $\phi$, we construct a \emph{canonical} tree on which $\Gamma_{\phi}$ acts on, using which we deduce easy to check, necessary and sufficient algebraic conditions for two Dehn twist automorphisms to be conjugate (see Lemma \ref{lem:upgrade_to_fiber_preserving} and Lemma \ref{lem: fibre and orientation preserving iso}). Using these algebraic conditions, we develop our algorithm to decide if two given Dehn twist automorphisms are conjugate. 

The class of free products of free abelian groups sits inside the larger classes of \emph{CSA (conjugacy separated abelian)} groups (which are groups where every maximal abelian subgroup is malnormal) as well as right-angled Artin groups. It remains open whether the solubility of the conjugacy problem for Dehn twist automorphisms extends to the classes of finitely generated CSA groups and right-angled Artin groups. 

\begin{qu}
    Is the conjugacy problem solvable for Dehn twist automorphisms of finitely generated CSA groups? 
\end{qu}
\begin{qu}
    Is the conjugacy problem solvable for Dehn twist automorphisms of finitely generated right-angled Artin groups? 
\end{qu}

Dehn twists can also be defined for any splitting of a group $G$ with abelian edge groups (see \cite[Definition 5.1]{Cohen_Lustig}). Our work also does not address conjugacy of such more general Dehn twists. 
\begin{qu}
    Is the conjugacy problem solvable for general Dehn twist automorphisms (as defined in \cite[Definition 5.1]{Cohen_Lustig}) of finitely generated free products of free abelian groups? 
\end{qu}

\textbf{Acknowledgments.}
We thank François Dahmani and Nicholas Touikan for suggesting the main question addressed in this paper. We are especially grateful to Nicholas Touikan for his guidance and for many insightful discussions. We also thank Antoine Goldsborough and Kuwari Mahanta for helpful discussions. This research was supported in part by the International Centre for Theoretical Sciences (ICTS) for participating in the program: Geometry in Groups (code: ICTS/GIG2024/07). We thank the organizers for facilitating this program.

\section{Preliminaries}

\subsection{Groups acting on trees and Bass--Serre theory}

\begin{defn}
    Given a group $\Gamma$, a \textbf{$\Gamma$-tree} is a tree $T$ with an action of $\Gamma$ on $T$ by isometries.

    A \textbf{morphism} of $\Gamma$-trees $T,S$ is a continuous $\Gamma$-equivariant map $f : T \to S$.

    An \textbf{isomorphism} of $\Gamma$-trees $T,S$ is a morphism $f : T \to S$ of $\Gamma$-trees whose inverse $f^{-1} : S \to T$ is also a morphism of $\Gamma$-trees. 
\end{defn}

\begin{defn}
    A $\Gamma$-tree $T$ is \textbf{non-trivial} if it is not a single vertex and is \textbf{minimal} if the action $\Gamma \curvearrowright T$ is minimal, i.e.\ there is no proper $\Gamma$-invariant subtree. 
\end{defn}

\begin{defn}
    An isometry $\gamma$ of a tree $T$ is \textbf{elliptic} if $\gamma$ fixes a vertex of $T$. Otherwise, $\gamma$ is called \textbf{hyperbolic}. 
    An action of a group $\Gamma$ by isometries on a tree $T$ is \textbf{elliptic} if $\Gamma$ fixes a vertex of $T$.
\end{defn}

By \cite[Theorem 2.1(2)]{Introduction_to_group_theory}, an isometry $\gamma$ of a tree $T$ is hyperbolic if and only if it translates along a bi-infinite geodesic line $l \subset T$, called the \textbf{axis} of $\gamma$. Furthermore, by \cite[Proposition 2.5]{Introduction_to_group_theory}, if a finitely generated group $\Gamma$ acts by isometries on a tree $T$ such that every element is elliptic, then $\Gamma$ acts elliptically on $T$. It follows that if a finitely generated group $\Gamma$ of isometries of $T$ does not act elliptically, then it must have an element which acts as a hyperbolic isometry. 

\subsection{Graphs of groups and Bass--Serre theory}

We assume the reader is familiar with Bass--Serre theory. Here, we simply set notation and definitions that we will use in this paper. For background on Bass--Serre theory, see, for instance \cite{Trees}. 

A \textbf{graph} $X = (V, E, \iota, \tau, \overline{})$ consists of sets $V$ of vertices and $E$ of (oriented) edges, maps $\iota : E \to V$ and $\tau : E \to V$ which we call the \textbf{origin} and \textbf{terminus} maps, respectively, together with a fixed point free involution $\bar{} : E \to E$ that assigns to every edge $e \in E$ the \emph{reverse} edge $\overline{e} \in E$, which satisfies $\iota(\overline{e}) = \tau(e)$ and $\tau(\overline{e}) = \iota(e)$. 

Note that we allow our graphs to have loops (i.e.\ edges $e$ such that $\iota(e) = \tau(e)$) and bigons (i.e.\ pairs of edges $e_1, e_2$ such that $\iota(e_1) = \iota(e_2)$ and $\tau(e_1) = \tau(e_2)$. 

A \textbf{graph of groups} $\mathcal{G} = (X, (G_v)_{v \in V}, (G_e)_{e \in E}, (\iota_e)_{e \in E}, (\tau_e)_{e \in E})$ consists of a graph $X$ (the \emph{underlying graph} of $\mathcal{G}$), groups $G_v$ and $G_e$ for each vertex $v \in V$ and each edge $e \in E$ and injective group homomorphisms $\iota_e : G_e \hookrightarrow G_{\iota(e)}$ and $\tau_e : G_e \hookrightarrow G_{\tau(e)}$ for every edge $e$, satisfying $G_e = G_{\overline{e}}$ and $\iota_e = \tau_{\overline{e}}$ for each edge $e$. We refer to the groups $G_v$ and $G_e$ as the \emph{vertex} and \emph{edge} groups of $\mathcal{G}$, respectively. 

Given a graph of groups $\mathcal{G}$ as above and a spanning (or maximal) tree $\mathcal{T} \subset X$, the \textbf{fundamental group} of $\mathcal{G}$ with respect to $\mathcal{T}$ is the group: 

$$\pi_1(\mathcal{G}, \mathcal{T}) = ((*_{v \in V} G_v) * F(E)) / \langle \langle e\overline{e}: e \in E, e : e \in E(\mathcal{T}), e \tau_e(g) e^{-1}\iota_e(g)^{-1} \text{ for all $g \in G_e$} \rangle \rangle $$

i.e.\ $\pi_1(\mathcal{G}, \mathcal{T})$ is the group generated by the vertex groups $G_v$ for each $v \in V$ and edges $e \in E$ subject to the relations: 

\begin{itemize}
    \item $e^{-1} = \overline{e}$ for each $e \in E$,
    \item $e = 1$ for each $e \in E(\mathcal{T})$,
    \item $e \tau_e(g) e^{-1} = \iota_e(g)$ for each $e \in E$ and each $g \in G_e$
\end{itemize}

By \cite[Proposition 20]{Trees}, $\pi_1(\mathcal{G}, \mathcal{T})$ is independent of the spanning tree $\mathcal{T}$ up to isomorphism, so we can simply write $\pi_1(\mathcal{G})$ for the fundamental group of $\mathcal{G}$. 

Given a graph of groups $\mathcal{G}$, there is a tree $T$, called the \textbf{Bass--Serre tree} of $\mathcal{G}$, with an action of $G = \pi_1 (\mathcal{G})$ such that the vertex and edge stabilizers of $T$ are conjugates of vertex and edge groups in $\mathcal{G}$. Conversely, given a group $G$ acting on a tree $T$ without inversions (i.e.\ if $g \in G$ fixes an edge $e$ of $T$, then $g$ fixes the endpoints of $e$), we have an induced \emph{quotient graph of groups} $\mathcal{G}$ (with underlying graph $ X = T/G$) whose Bass--Serre tree is $T$. The fundamental theorem of Bass--Serre theory gives a correspondence between graphs of groups $\mathcal{G}$ with a given fundamental group $G$ and $G$-trees $T$, where the corresponding tree of $\mathcal{G}$ is its Bass--Serre tree and the corresponding graph of groups of a $G$-tree $T$ is the quotient graph of groups $T/G$. 

In this paper, we will mostly focus on the case when $X$ is a single edge (in which case $\pi_1(\mathcal{G})$ is an \textit{amalgamated free product}) or when $X$ is a loop (in which case $\pi_1(\mathcal{G})$ is an \textit{HNN extension}). 

Recall that an \textbf{amalgamated free product} of two groups $A,B$ along a common subgroup $C$ (with inclusions $\iota_A : C \hookrightarrow A$ and $\iota_B : C \hookrightarrow B$ of $C$ into $A,B$) is the group

$$A *_C B = (A * B) / \langle \langle \iota_A(c)\iota_B(c)^{-1} : c \in C \rangle \rangle$$

i.e.\ $A *_C B$ is the group generated by $A$ and $B$ with the additional relations that every element $c$ in $C$ as an element of $A$ is identified with its copy in $B$. 

Recall that an \textbf{HNN extension} of a group $A$ along isomorphic subgroups $D, C$ inside $A$ (with $\sigma: D \to C$ an isomorphism) is the group 

$$A_{* D^e = C} = (A * \langle e \rangle)/ \langle \langle e d e^{-1} \sigma(d)^{-1} \text{ for all $d \in D$} \rangle \rangle$$

i.e.\ $A_{* D^e = C}$ is the group generated by $A$ and $e$ with the additional relations that every element $d$ in $D$ is conjugated by $e$ to its image $\sigma(d)$ in $C$. 

In the case of when $\mathcal{G}$ is a segment with vertex groups $A$ and $B$, edge group $C$ and $G=\pi_1(\mathcal{G}) = A *_C B$, the Bass--Serre tree of $\mathcal{G}$ is the tree with vertices left cosets $gA, gB$ of $A$ and $B$ in $G$, with vertices $gA, gB$ connected by an edge labeled $gC$. 

In the case of when $\mathcal{G}$ is a loop with vertex group $A$, edge groups $D \cong C$ in $A$ and $G=\pi_1(\mathcal{G}) = A *_{D^e = C}$, the Bass--Serre tree of $\mathcal{G}$ is the tree with vertices left cosets $gA$ and vertices $gA$ and $geA$ connected by an edge $gC$. 

\subsection{Acylindrical actions on trees}

\begin{defn}
    \label{defn: acylindrical action on a tree}
    An action of a torsion-free group $\Gamma$ on a tree $T$ (or the tree $T$ itself) is $k$-\textbf{acylindrical} for $k \geq 0$ if the pointwise stabilizer of every geodesic segment of length $k+1$ in $T$ is trivial. The action is \textbf{acylindrical} if it is $k$-acylindrical for some $k \geq 0$. 
\end{defn}

\begin{defn}
    \label{defn: non-elementary acylindrical}
    An acylindrical action of a group $\Gamma$ on a tree $T$ is \textbf{non-elementary} if the action is not elliptic and $\Gamma$ is not virtually cyclic. 
\end{defn}

Groups admitting non-elementary acylindrical actions on trees (or more generally, on Gromov hyperbolic metric spaces) have many nice properties. See, for instance, \cite{OsinAH_Groups}. 

\begin{prop}
    \label{prop: properties of groups acting acylindrically on trees}
    Let $\Gamma \curvearrowright T$ be a non-elementary acylindrical action of a group $\Gamma$ on a tree $T$. Then the following hold: 

    \begin{itemize}
        \item The centralizer $C_{\Gamma}(\gamma)$ of every hyperbolic isometry $\gamma \in \Gamma$ is cyclic (\cite[Corollary 6.9]{OsinAH_Groups}). 
        \item The centre of $\Gamma$ is finite (\cite[Corollary 7.2]{OsinAH_Groups}).
    \end{itemize}
\end{prop}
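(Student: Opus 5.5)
Both items are cited from Osin, so the plan is to explain how the acylindrical tree action fits Osin's framework and then give a direct tree-theoretic argument for the specific statements, which are elementary in this setting.

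\textbf{Reduction to Osin's setting.} A tree $T$ with the path metric is $0$-hyperbolic. Since $\Gamma$ is torsion-free and pointwise stabilizers of segments of length $k+1$ are trivial, the action is acylindrical in the metric sense. Given $\varepsilon$, take $R = \varepsilon + k + 2$ or so. If $d(x,y)\ge R$ and $g$ moves both $x$ and $y$ by at most $\varepsilon$, then $g$ fixes pointwise a subsegment of $[x,y]$ of length greater than $k$. Hence $g=1$, and the number of such $g$ is $1$. Non-elementarity in Osin's sense requires the limit set to have more than two points. This follows from our definition: the action is not elliptic, so by the remark after the definition of elliptic actions there is a hyperbolic element. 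Since $\Gamma$ is not virtually cyclic, Osin's classification rules out the lineal case. With this in place, both bullets follow from \cite{OsinAH_Groups}.

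\textbf{Direct argument for the first bullet.} Let $\gamma$ be hyperbolic with axis $l$, and let $c \in C_\Gamma(\gamma)$. Then $c\,l$ is the axis of $c\gamma c^{-1}=\gamma$, so $c$ preserves $l$. This gives a homomorphism $\rho: C_\Gamma(\gamma)\to \mathrm{Isom}(l)$. Its kernel fixes $l$ pointwise, so it is trivial by acylindricity. Hence $C_\Gamma(\gamma)$ embeds in $\mathrm{Isom}(\mathbb Z)=D_\infty$, with the translation-by-$\mathbb{Z}$ part having discrete image. Reflections are excluded: a reflection $c$ of $l$ would satisfy $c\gamma c^{-1}=\gamma^{-1}$ on $l$, not $\gamma$. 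So the image lies in the translation subgroup $\mathbb Z$, and $C_\Gamma(\gamma)$ is cyclic.

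\textbf{Second bullet.} Pick a hyperbolic $\gamma$. The centre $Z(\Gamma)$ lies in $C_\Gamma(\gamma)$, which is cyclic. Each central $z$ also preserves the axis of every hyperbolic $\gamma'$. Since $\Gamma$ is not virtually cyclic, I expect to find hyperbolic elements with distinct axes. Given such a $\gamma'$, one can take $g\gamma g^{-1}$ for some $g\notin$ the setwise stabilizer of $l$; that stabilizer is virtually cyclic by the same argument, so such a $g$ exists. Then $z$ translates $l$ and $g l$ by the same amount, since $z$ commutes with $g$. If $z$ translates nontrivially, the two axes would have to coincide, because a hyperbolic element has a unique axis. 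So $z$ acts trivially on $l$ and hence $z=1$ by acylindricity. Therefore the centre is trivial, in particular finite.

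\textbf{Main obstacle.} The delicate step is verifying the non-elementarity hypothesis and finding two hyperbolic elements with different axes. This reduces to showing that the setwise stabilizer of an axis is virtually cyclic, which uses the same embedding into $D_\infty$ as in the first bullet.
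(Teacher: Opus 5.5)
Your direct tree argument is correct, and it takes a different route from the paper, which gives no argument and simply cites Osin's Corollaries 6.9 and 7.2. You restrict $C_\Gamma(\gamma)$, and more generally the setwise stabilizer of the axis $l$, to $l$. Acylindricity makes this restriction injective into $\mathrm{Aut}(l)\cong D_\infty$. Commuting with $\gamma$ rules out reflections. Finally, because $\mathrm{Stab}(l)$ is virtually cyclic, there is a conjugate $g\gamma g^{-1}$ with axis $gl\neq l$, and this forces every central element to translate $l$ by $0$, hence to be trivial.

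Compared with the citation, your route has several advantages. It is self-contained. The first bullet holds for every hyperbolic element of any $k$-acylindrical action, with no non-elementarity needed. You get \emph{cyclic} directly, rather than Osin's \emph{virtually cyclic} plus torsion-freeness, and a trivial rather than merely finite centre. Above all, you never have to match the paper's combinatorial notions of acylindricity and non-elementarity with Osin's metric ones, a step the paper leaves implicit. The citation, in turn, buys brevity and generality (hyperbolic spaces, groups with torsion).

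That matching is exactly where your reduction paragraph goes wrong. It is false that an element moving $x$ and $y$ by at most $\varepsilon$ must fix a long subsegment of $[x,y]$. A hyperbolic element of translation length at most $\varepsilon$ whose axis runs along $[x,y]$ also qualifies. For example, take $F_2=\langle a,b\rangle$ acting on its Cayley tree, with $x=1$, $y=a^R$ and $g=a$: this $g$ moves both points by $1$. So $N=1$ is wrong.

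The claim can be repaired as follows:
\begin{itemize}
\item An elliptic such $g$ has $\mathrm{Fix}(g)$ within $\varepsilon/2$ of both $x$ and $y$. By convexity it fixes a subsegment of length at least $R-\varepsilon$, so it is trivial.
\item A hyperbolic such $g$ has translation length at most $\varepsilon$ and an axis containing the middle of $[x,y]$. Two of them with the same translation length and direction differ by an element fixing a subsegment of length at least $R-2\varepsilon$, so they are equal.
\end{itemize}
Hence $N\le 2\lfloor\varepsilon\rfloor+1$ works once $R\ge 2\varepsilon+k+1$. Your direct argument does not use this paragraph, so your conclusion stands.

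One last small point: the remark you quote to produce a hyperbolic element assumes $\Gamma$ is finitely generated, and the proposition does not. You can either add that hypothesis, which holds in every application in the paper, or argue as follows. If every element were elliptic and there were no global fixed vertex, $\Gamma$ would fix an end. Then every element would fix a ray toward that end and so be trivial by acylindricity, a contradiction.
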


\subsection{Dehn twists}
\label{section: Dehn twists}

\begin{defn}
    \label{defn: Dehn twist}
    Let $G$ be a group. An automorphism $\delta \in \Aut(G)$ is called a \textbf{Dehn twist} of $G$ if either: 
    
    \begin{itemize}
        \item there exists a splitting of $G$ as an amalgam $G = A *_C B$ with $C$ abelian and an element $c_0 \in C$ such that $\delta \vert_A = id_A$ and $\delta(b) = c_0 b c_0^{-1}$ for each $b \in B$, or 
        \item there exists a splitting of $G$ as an HNN extension $G = A_{*D^e = C}$ with $D,C$ abelian and an element $w \in D$ such that $\delta \vert_A = id_A$ and $\delta(e) = ew$. 
    \end{itemize}
    
    In the first case, $\delta$ is called a \textbf{separating Dehn twist} of $G$ and in the second case, $\delta$ is called a \textbf{non-separating Dehn twist} of $G$.
    
\end{defn}

\begin{ex}
    \label{example: Geometric Dehn twist example}
    Let $\Sigma$ be an orientable surface and $\gamma \subset \Sigma$ a simple closed curve. Then the Dehn twist homeomorphism $T_{\gamma} \in \mathrm{Homeo}(\Sigma)$ about $\gamma$ induces a Dehn twist automorphism (in the sense of Definition \ref{defn: Dehn twist}) $\delta_{\gamma}$ of $\pi_1 (\Sigma)$. The Dehn twist automorphism $\delta_{\gamma}$ is separating (respectively, non-separating) if and only if the curve $\gamma$ is as well. 
\end{ex}

\section{Separating Dehn twists}
\label{section: separating Dehn twists}

We assume $G=G_1 * \cdots * G_k$ where $G_i \cong \mathbb Z^{n_i}$ for $n_i \in \mathbb{N}$.

Let $G = A *_C B$ where $A, B, C \leq G$. Let $\delta$ be a separating Dehn twist of this splitting, acting as follows for some $c_0 \in C$:  

$$ \delta : \begin{cases}
    a \to a & \forall a \in A \\
    b \to c_0bc_0^{-1} & \forall b \in B
\end{cases} $$

Throughout this section (and subsequent sections of the paper), we use the following classical theorem on the structure of subgroups of free products:

\begin{thm}[Kurosh's subgroup theorem]\cite[Theorem 3.01]{Kuhn}
\label{thm:Kurosh}

If $H$ is a subgroup of $G=*_{i \in I} G_i$ then there is a subset $X \subseteq G$, an index set $J$, subgroups $H_j$ of some $G_i$ for all $j \in J$ such that $$H=F(X) * (*_{j \in J} H_j^{\dag}).$$ 
where $H_j^{\dag}$ denotes a conjuguate of $H_j$. 
\end{thm}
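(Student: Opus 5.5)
We prove Kurosh's theorem topologically, via covering spaces of a graph of spaces; alternatively one can let $H$ act on the Bass--Serre tree of the free product and apply the fundamental theorem of Bass--Serre theory. We take the tree approach, since it fits the preliminaries above.

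\textbf{Step 1: the tree.} Realize $G=*_{i\in I}G_i$ as the fundamental group of a graph of groups $\mathcal{G}$. Its underlying graph is a star: a central vertex $v_0$ with trivial vertex group, joined by one edge $e_i$ to a vertex $v_i$ with group $G_i$ for each $i\in I$. All edge groups are trivial. Let $T$ be the Bass--Serre tree. Its edge stabilizers are trivial, and its vertex stabilizers are either trivial or conjugates $gG_ig^{-1}$. The action of $G$ on $T$ has no inversions, because the quotient graph has no edge reversed by the action.

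\textbf{Step 2: restrict to $H$.} The subgroup $H$ acts on $T$ without inversions. By the fundamental theorem of Bass--Serre theory, $H\cong\pi_1(\mathcal{H})$, where $\mathcal{H}$ is the quotient graph of groups with underlying graph $Y=T/H$. Its edge groups are $H$-stabilizers of edges of $T$, hence trivial. Its vertex groups are $H\cap \mathrm{Stab}_G(\tilde v)$, which is either trivial or $H\cap gG_ig^{-1}=g(g^{-1}Hg\cap G_i)g^{-1}$. Set $H_j=g^{-1}Hg\cap G_i\le G_i$. Then this vertex group is $H_j^{\dag}$, a conjugate of a subgroup of $G_i$, as the statement requires.

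\textbf{Step 3: compute $\pi_1$.} Choose a spanning tree $\mathcal{T}\subset Y$. By the presentation of $\pi_1(\mathcal{H},\mathcal{T})$ in the preliminaries, the edge relations $e\tau_e(g)e^{-1}=\iota_e(g)$ are vacuous because edge groups are trivial. The generators $e\in E(\mathcal{T})$ are killed, and we identify $\overline e=e^{-1}$. So $\pi_1(\mathcal{H},\mathcal{T})$ is the free product of the vertex groups with the free group on one edge from each pair $\{e,\overline e\}$ not in $\mathcal{T}$.

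We then transport this decomposition back into $G$. The isomorphism $\pi_1(\mathcal{H})\to H$ from Bass--Serre theory sends each vertex group onto the stabilizer of a chosen lift $\tilde v$ in a fundamental domain, and each non-tree edge onto an element $h_e\in H$. Let $X=\{h_e\}\subseteq H\subseteq G$. Then $H=F(X)*(*_{j\in J}H_j^{\dag})$, with $J$ indexing the vertices of $Y$ that have nontrivial groups.

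\textbf{Main obstacle.} The delicate point is Step 3's identification. We must make sure the isomorphism $\pi_1(\mathcal{H},\mathcal{T})\cong H$ realizes the vertex groups as the actual subgroups $H\cap g G_i g^{-1}$, with $g$ determined by a lift of $\mathcal{T}$ to $T$, and the free generators as genuine elements of $H$. This requires the standard construction of a fundamental domain, or lifting of the spanning tree, from Serre's \emph{Trees}; we cite that construction rather than redo it. When $I$ or $Y$ is infinite, Zorn's lemma is needed to pick the spanning tree and its lift.
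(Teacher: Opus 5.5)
The paper gives no proof of this statement: it imports Kurosh's theorem as a black box from \cite[Theorem 3.01]{Kuhn}, where it is derived combinatorially from presentations, by Reidemeister--Schreier rewriting with a well-chosen system of coset representatives. Your Bass--Serre argument is the other standard proof, and it is correct.

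The star-shaped graph of groups has fundamental group $*_{i\in I}G_i$. The action on its Bass--Serre tree $T$ has trivial edge stabilizers and no inversions. Your stated reason for the latter is vague; the clean one is that $T$ is bipartite, with vertices of type $v_0$ and of type $v_i$, and $G$ preserves types. Applying the structure theorem in Serre's \emph{Trees} to $H$ acting on $T$ then finishes the proof. This uses a tree of representatives lifting a spanning tree of $T/H$, which needs Zorn's lemma in general. It identifies $H$ with the free product of two pieces: the stabilizers $H\cap gG_ig^{-1}=g(g^{-1}Hg\cap G_i)g^{-1}$ of the chosen lifts, and the free group on the elements $h_e$ attached to the non-tree edges.

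Compared with the citation, your route uses only machinery the paper already sets up in its preliminaries. It also gives a geometric reading of the decomposition: the factors $H_j^{\dag}$ are exactly the nontrivial $H$-stabilizers of vertices of $T$, one for each $H$-orbit of such vertices, and $F(X)\cong\pi_1(T/H)$. The combinatorial proof in Kuhn is instead self-contained at the level of presentations. It does not rest on the structure theorem for groups acting on trees, which is where all the real work in your argument is hidden.
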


We will also use the following result about amalgamated free products.

\begin{lemma}(\cite[Theorem 2.6]{Lyndon_Schupp})
    \label{lemma: normal form theorem for amalgamated free products}
    Let $G = A *_C B$. 
    
    \begin{enumerate}
        \item Each $g \in G$ can be written uniquely as: 

        $$g = c a_1 b_1 \cdots a_n b_n$$

        with $c \in C$ and $a_i$ in a choice of right coset transversal for $C$ in $A$ and $b_i$ in a choice of a right coset transversal of $C$ in $B$. 
        \item Suppose $n \geq 1$ and $a_1,\ldots,a_n \in A \setminus B$ and $b_1, \ldots, b_n \in B \setminus A$. Then $a_1 b_1 \cdots a_n b_n \neq 1$ in $G$. 
    \end{enumerate}

\end{lemma}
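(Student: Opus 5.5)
The plan is the standard one via the Bass--Serre tree $T$ of the splitting $G = A *_C B$: vertices are the cosets $gA$ and $gB$, and edges are the cosets $gC$. The two parts of the lemma go in order.

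For part (1), existence comes first. By definition $G$ is generated by $A \cup B$, so every $g$ is a product of letters from $A$ and $B$. Fix right transversals $S_A$ for $C$ in $A$ and $S_B$ for $C$ in $B$, both containing $1$. Reading the word from the right, rewrite each letter $x \in A$ as $x = c\, s$ with $c \in C$ and $s \in S_A$, and push the $C$-part leftward into the preceding letter. Whenever the $S$-part is $1$, merge adjacent letters from the same factor. Inserting trivial letters where needed gives the alternating form $c\, a_1 b_1 \cdots a_n b_n$; here $a_1$ or $b_n$ may be $1$, which the intended convention allows. Uniqueness will be deduced from part (2), or more cleanly from a van der Waerden style argument. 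Let $\Omega$ be the set of all reduced sequences $(c, s_1, \dots, s_m)$ with alternating nontrivial transversal entries. Define actions of $A$ and $B$ on $\Omega$ by the obvious left multiplication-and-renormalise rule, and check that these actions agree on $C$. The universal property of the amalgam then gives an action of $G$ on $\Omega$. Since $g$ sends the empty sequence $(1)$ to its normal form, two normal forms of the same $g$ must coincide.

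For part (2), suppose $w = a_1 b_1 \cdots a_n b_n$ with $a_i \in A\setminus C$ and $b_i \in B \setminus C$. (This is how we read "$A \setminus B$", since $A \cap B = C$ inside $G$.) Rewrite each letter in transversal form and push the $C$-parts to the left. Because the letters lie outside $C$, every transversal entry remains nontrivial, so the result is a reduced sequence of length $2n \ge 2$. By the uniqueness in part (1) it differs from the normal form $(1)$ of the identity, hence $w \ne 1$.

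As a geometric cross-check, $w$ acts on $T$ as a hyperbolic isometry. Consecutive letters move the base edge $C$ across the vertices $A$ and $B$ without backtracking, since $a_i \notin C$ means $a_i$ does not fix the edge. So $w$ carries the base edge to an edge at distance at least $2n-1$ from it, and $w \ne 1$.

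The main obstacle is verifying that the actions on $\Omega$ are well defined and agree on $C$; this is the case-by-case bookkeeping of whether a multiplication lowers or preserves the length of the sequence. Since the lemma is cited from Lyndon--Schupp, the paper would simply quote it, but this is the route I would write out.
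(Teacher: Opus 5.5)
Your proof is correct. There is, however, no argument in the paper to compare it with: the lemma is simply quoted from Lyndon--Schupp. So the comparison is between a citation and your self-contained van der Waerden proof.

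The citation buys brevity. Your route gives more than the literal statement, and the extra is useful here:
\begin{itemize}
\item The action of $G$ on $\Omega$ shows at once that $A$ and $B$ embed in $G$ and that $A\cap B=C$. You need the latter even to read ``$A\setminus B$'' as $A\setminus C$.
\item The same length count shows that \emph{every} alternating product of $k\ge 1$ letters from $A\setminus C$ and $B\setminus C$, of either parity and starting in either factor, sends $(1)$ to a sequence of length $k$, so it is nontrivial.
\end{itemize}
That general form is what the paper actually uses, for instance the odd-length word $a_1b_1\cdots a_nb_ng^{-1}$ in the proof of Theorem~\ref{lemma: enlarging the splitting in separating case}. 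Part (2) as stated only covers even-length words beginning in $A$. Also, uniqueness in (1) only holds with your convention: the transversals contain $1$, and only $a_1$ and $b_n$ may equal $1$. Your set $\Omega$ builds this in.

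Three small points.
\begin{enumerate}
\item Commit to proving uniqueness through $\Omega$. You also mention deducing it from part (2), but you then prove (2) from uniqueness, so doing both would be circular.
\item The Bass--Serre ``cross-check'' is not independent evidence. Proving that the coset graph $T$ is a tree uses this very normal form theorem (see \cite{Trees}, Chapter I), so treat it only as an illustration.
\item The bookkeeping you expect to be the main obstacle disappears if you define the $A$-action by transport of structure. Let $Y_A$ be the set of reduced sequences without $C$-coordinate whose first entry, if any, lies in $S_B$. The map $A\times Y_A\to\Omega$ sending $(a,y)$ to $(a;y)$ if $a\in C$, and to $(c;s,y)$ if $a=cs$ with $1\neq s\in S_A$, is a bijection. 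Left multiplication on the first factor is then automatically an action. Doing the same for $B$, an element $c'\in C$ acts by $(c;s_1,\dots,s_m)\mapsto(c'c;s_1,\dots,s_m)$ under both actions, so they agree on $C$ with no case analysis.
\end{enumerate}
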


\begin{thm}
    \label{lemma: enlarging the splitting in separating case}
    Let $G$ be a finitely generated group that splits as a free product of free abelian groups. Suppose that $G = A_0 *_{C_0} B_0$ where $C_0$ is non-trivial abelian. Fix some $c_{0} \in C_0 \setminus 1$ and let $\delta$ be a separating Dehn twist of $G$ associated with this splitting and $c_0 \in C_0$. Then there exists a splitting $G = A *_C B$ with $A = \mathrm{Fix}(\delta)$ (the fixed point set of $\delta$), $C \geq C_0, B \geq B_0$, $C \leq A, B$ and $C$ maximal abelian in $G$ (and hence in $A$ and $B$) and such that $\delta$ still acts as a separating Dehn twist of this splitting. 
\end{thm}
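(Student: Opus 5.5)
The plan is to enlarge the splitting in two stages: first replace $C_0$ by its centralizer, then absorb into $A$ everything that $\delta$ fixes.

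\textbf{Step 1: the maximal abelian subgroup.} Let $C = C_G(c_0)$ be the centralizer of $c_0$ in $G$. In a free product of free abelian groups, every non-trivial element has a centralizer that is maximal abelian and malnormal. This follows from Kurosh's theorem (Theorem \ref{thm:Kurosh}) together with the standard description of centralizers in free products: either $c_0$ is conjugate into some factor $G_i$, in which case $C$ is the corresponding conjugate of $G_i$, or $C$ is infinite cyclic. Since $C_0$ is abelian and contains $c_0$, we have $C_0 \le C$.

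\textbf{Step 2: define the new pieces.} Set $A = \mathrm{Fix}(\delta)$ and $B = \langle B_0, C\rangle$.

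\begin{itemize}
\item $C \le A$: for $g \in C$, write $g$ in normal form (Lemma \ref{lemma: normal form theorem for amalgamated free products}). Then $\delta(g)$ is obtained from $g$ by inserting $c_0^{\pm1}$ around each $B_0$-syllable. Using $c_0 g = g c_0$, one checks that $\delta(g) = g$. More conceptually, $\delta$ is conjugation by $c_0$ on $B_0$ and the identity on $A_0$, so the centralizer of $c_0$ is fixed.
\item $A_0 \le A$, immediately from the definition of $\delta$.
\item $\delta$ restricted to $B$ is conjugation by $c_0$: this holds on $B_0$ by definition, and on $C$ because $c_0$ is central in $C$.
\end{itemize}

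\textbf{Step 3: show $G = A *_C B$.} The map $A * B \to G$ is surjective because $G = \langle A_0, B_0\rangle$. The key claims are $A \cap B = C$ and that alternating words with no syllable in $C$ are non-trivial.

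My approach is to work in the Bass--Serre tree $T_0$ of $A_0 *_{C_0} B_0$. I would first describe $\mathrm{Fix}(\delta)$ concretely. An element $g$ in normal form $a_1 b_1 \cdots a_n b_n$ has
\[
\delta(g) = a_1 c_0 b_1 c_0^{-1} a_2 \cdots .
\]
By the normal form theorem, $\delta(g) = g$ should force each ``middle'' $a_i$ ($1 < i \le n$) to commute with $c_0$ modulo the appropriate adjustments; that is, the interior $A_0$-syllables lie in $C$. This would give
\[
\mathrm{Fix}(\delta) = \langle A_0, C\rangle,
\]
which, by collapsing the edges of $T_0$ whose stabilizers meet $C$, should be exactly $A_0 *_{C_0} C$.

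I would then realize $G$ as the fundamental group of a graph of groups with three vertices, $A_0 - C_0 - C - C_0 - B_0$, obtained by subdividing and folding: first blow up $T_0$ by inserting the vertex group $C$, which is legitimate because $C \cap A_0$ and $C \cap B_0$ both contain $C_0$. Collapsing the two sides then gives
\[
A = A_0 *_{C_0 \cap \cdots} \big(C \cap \ldots\big), \qquad B = C *_{\cdots} B_0, \qquad G = A *_C B.
\]

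\textbf{Main obstacle.} The hard part is justifying that insertion: showing that $C$ meets the tree $T_0$ in a way compatible with the splitting. Concretely, I expect $C$ to fix a vertex of $T_0$, or more generally that $\langle A_0, C\rangle$ is itself the amalgam $A_0 *_{A_0\cap C} C$. I would first try malnormality of $C$ in $G$ together with acylindricity-type arguments on $T_0$.

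Finally, maximality of $C$ in $A$ and in $B$ follows because $C$ is maximal abelian in $G$. The conditions $B \ge B_0$ and $C \ge C_0$ hold by construction, and $\delta$ acts as the separating Dehn twist of $A *_C B$ with the same element $c_0 \in C$.
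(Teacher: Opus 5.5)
Your choice of pieces, $C=C_G(c_0)$, $B=\langle B_0,C\rangle$ and $A=\mathrm{Fix}(\delta)$, is the right one. Step~1 even gives maximality of $C$ in $G$ directly, and the choice covers both cases $C\le A_0$ and $C\le B_0$ at once. But the argument stops exactly where the content of the theorem lies. The ``main obstacle'' you flag is that $C$ fixes an endpoint of the edge $e_0$ of $T_0$, i.e.\ $C\le A_0$ or $C\le B_0$. It is never proved, and Steps~2 and~3 silently depend on it. Several intermediate claims also fail as stated:
\begin{itemize}
\item Your reason for $C\le\mathrm{Fix}(\delta)$ is not an argument. $\delta$ acts syllable by syllable: for $g=ab$ with $a\in A_0$, $b\in B_0$, one has $\delta(g)=g$ iff $b$ commutes with $c_0$. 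But $g\in C_G(c_0)$ only gives $a^{-1}c_0a=bc_0b^{-1}\in C_0$. In a general amalgam this common value need not be $c_0$ (take $C_0\cong\Z^2$ with $a$ and $b$ both swapping a basis), so malnormality must enter.
\item Your normal-form sketch constrains the wrong syllables. Fixed elements have their $B_0$-syllables in $C$; when $C\le B_0$, the interior $A_0$-syllables never lie in $C$.
\item The identification $\mathrm{Fix}(\delta)=A_0*_{C_0}C$ is false when $C_0\subsetneq C\le A_0$; there $\mathrm{Fix}(\delta)=A_0$.
\item The graph of groups with vertex groups $A_0$, $C$, $B_0$ and both edge groups $C_0$ has fundamental group strictly larger than $G$ whenever $C\ne C_0$, since it adjoins a second copy of $C$. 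The edge groups must be $C\cap A_0$ and $C\cap B_0$, and this decomposes $G$ only once $C$ is known to lie in a vertex group.
\end{itemize}

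Your instinct (malnormality plus acylindricity of $T_0$) points the right way, but acylindricity of $T_0$ is not available a priori. What unlocks it is the paper's opening observation. Since $C$ is abelian, $C\cap A_0$ and $C\cap B_0$ cannot both strictly contain $C_0$: an element of $(C\cap A_0)\setminus C_0$ and one of $(C\cap B_0)\setminus C_0$ would commute, yet their commutator is a reduced alternating word of length four. Say $C\cap B_0=C_0$. Then $C_0$ is malnormal in $B_0$, because $C$ is malnormal in $G$. The paper then shows by a normal-form computation that $C_G(c_0)$ is the centraliser of $C_0$ in $A_0$. In tree language: no path of length $2$ centred at a $B_0$-type vertex has non-trivial stabiliser, so $\mathrm{Fix}(c_0)$ is a star centred at the vertex $A_0$ (possibly just $e_0$). $C$ preserves $\mathrm{Fix}(c_0)$ and acts without inversions, so it fixes that vertex.

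Once $C\le A_0$ (or symmetrically $C\le B_0$), the identity $A_0*_{C_0}B_0=\langle A_0,C\rangle*_C\langle B_0,C\rangle$ is a rebracketing of presentations; the paper checks it with normal forms. $\delta$ is the identity on the first factor and conjugation by $c_0$ on the second. Then $\mathrm{Fix}(\delta)=\langle A_0,C\rangle$ follows from malnormality of $C$ in $\langle B_0,C\rangle$. For $g\notin\langle A_0,C\rangle$ in reduced form, $g^{-1}\delta(g)=1$ forces the first $B$-syllable $b_1$ to satisfy $b_1^{-1}c_0b_1\in C$. Hence $b_1\in C$, a contradiction.
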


\begin{proof}
    First, note that $C_0$ is maximal abelian in $A_0$ or $B_0$. Indeed, if not, then taking $a_0 \in A_0 \setminus C_0$ and $b_0 \in B_0 \setminus C_0$ centralizing $C_0$, since $C_0 \neq 1$ and $G$ is commutative transitive (as it is CSA), we have that $a,b$ commute. However, this contradicts the normal form theorem, since there can be no relation between elements of $A_0 \setminus C_0$ and elements of $B_0 \setminus C_0$. 

    First, we will show that we can enlarge the splitting to a splitting $G = A *_C B$ where $C$ is malnormal in $A$ and $B$, and then we will show that in fact $A = \mathrm{Fix}(\delta)$. 

    Without loss of generality, suppose that $C_0$ is maximal abelian in $B_0$ and let $C_A$ be the centralizer of $C_0$ in $A_0$ (which is the maximal abelian subgroup of $A_0$ containing $C_0$). We can assume that $C_A \neq C_0$, since otherwise $C_0$ is already malnormal in both $A_0$ and $B_0$ and we are done. Let $A = A_0, B = \langle B_0, C_A \rangle$ and $C = C_A$. We claim that $G = A *_C B$, that $C$ is maximal abelian in both $A$ and $B$ and $\delta$ still acts as a separating Dehn twist of this splitting with respect to $c_0$. 

\vs

\underline{$C$ is maximal abelian in $A$}: Since $C$ is the centralizer of $C_0$ in $A$, we have that $C$ is maximal abelian in $A$.

\vs

\underline{$C$ is maximal abelian in $B$}: Suppose $g \in B$ is in the maximal abelian subgroup of $B$ containing $C$. Then $g$ commutes with some $c \in C \setminus C_0$. We can write $g = b_1 \cdots a_n$ for some $a_i \in C_A \setminus B_0 \subset A_0 \setminus B_0 = A_0 \setminus C_0$ and $b_i \in B_0 \setminus C_A \subset B_0 \setminus C_0$, with all letters non-trivial except possibly $b_1$ or $a_n$. We have $cgc^{-1}g^{-1} = 1$, which yields: 

$$ c (b_1 a_1 \cdots a_n) c^{-1} (b_1 a_1 \cdots b_n a_n)^{-1} = 1 \iff c b_1 a_1 \cdots b_n a_n c^{-1}a_n^{-1} b_n^{-1} \cdots a_1^{-1} b_1^{-1} = 1$$

Since $a_n \in C$, this becomes:

$$ c b_1 a_1 \cdots b_n c^{-1} b_n^{-1} \cdots a_1^{-1} b_1^{-1} = 1$$

Since $c \in C \setminus C_0$, we have $c \in A_0 \setminus C_0$. If $n \geq 1$ and $g \notin C$, then $b_n \notin C_A$, so $b_n \in B_0 \setminus C_0$, We thus have an alternating word in $A_0 \setminus B_0$ and $B_0 \setminus C_0$ being the trivial element, contradicting the normal form theorem for $G = A_0 *_{C_0} B_0$. Therefore, $n = 1$ and $g = a_1 \in C_A$. We conclude that $C$ is maximal abelian, and hence malnormal, in $B$.

\vs

\underline{$A \cap B = C$}: Note that $C \subseteq A,B$ and hence $C \subseteq A \cap B$. Let $g \in A \cap B$. Since $g \in B = \langle B_0, C_A \rangle$, we can write $g = a_1 b_1 \cdots a_n b_n$ for some $a_i \in C_A \setminus B_0 \subset A_0 \setminus B_0 = A_0 \setminus C_0$ and $b_i \in B_0 \setminus C_A \subset B_0 \setminus C_0$. Suppose for contradiction that $g \notin C$, so there is at least one letter $b_i$. We then have: 

$$1 = a_1 b_1 \cdots a_n b_n g^{-1}$$

Since $g \in A = A_0$, this is an alternating non-trivial word in $A_0 \setminus C_0$ and $B_0 \setminus C_0$, contradicting the normal form theorem for $G = A_0 *_{C_0} B_0$. Therefore, we must have $g \in C_A = C$. 

\vs

\underline{$G = A *_C B$}: Since $G = \langle A_0, B_0 \rangle$ and since $A \geq A_0, B \geq B_0$ we have $G = \langle A, B\rangle$. Since we showed above that $A \cap B = C$, it remains to show the normal form theorem for $A,B,C$. Suppose that: 

$$c a_1 b_1 \cdots a_n b_n = 1$$

for $c \in C, a_i \in A \setminus C, b_i \in B \setminus C$.

Note that $A \setminus C \subset A_0 \setminus C_0$, so each $a_i \in A_0 \setminus C_0$. 

We can write each $b_i$ as $b_i = b_{i1}a_{i1} \cdots b_{im_i} a_{im_i}$ for $b_{ij} \in B_0 \setminus C_A$ and $a_{ij} \in C_A \setminus B_0$.  Substituting these expressions above, we obtain: 

$$ c a_1 (b_{11}a_{11} \cdots b_{1m_1} a_{1m_1})a_2 \cdots a_n (b_{n1}a_{n1} \cdots b_{nm_n} a_{nm_n}) = 1 $$

Note that $B_0 \setminus C_A \subset B_0 \setminus C_0$ and $C_A \setminus B_0 \subset A_0 \setminus C_0$, so each $b_{ij} \in B_0 \setminus C_0$ and $a_{ij} \in A_0 \setminus C_0$. Also, note that $ca_1 \notin C_0$, since otherwise $a_1 \in C$. Thus, we obtain a non-empty word in $A_0 \setminus C_0$ and $B_0 \setminus C_0$ equal to the identity in $G$, contradicting the normal form theorem for $G = A_0 *_{C_0} B_0$. We conclude that the word the $c a_1 b_1 \cdots a_n b_n$ must be empty, and so the normal form theorem holds for $A,B,C$ and thus that $G = A *_C B$. 

\vs 

\underline{$\delta$ still acts as a separating Dehn twist of $G = A *_C B$}: We still have $\delta$ being the identity on $A = A_0$. We need to show that $\delta$ acts by conjugation via $c_0$ on $B$. We know this is true on $B_0$. On $C_A$, $\delta$ fixes $C_A$ pointwise since $C_A \subset A$. Also, since $C_A$ is abelian and contains $c_0$, we have that $\delta$ acts as conjugation by $c_0$ on $C_A$ (which is trivial). Therefore, $\delta$ acts as conjugation by $c_0$ on $B$ as well. We conclude that $\delta$ acts as a separating Dehn twist of $G = A *_C B$. 

\vs 

Now we show that $A = \text{Fix}(\delta)$. Indeed, by definition $A \subseteq \text{Fix}(\delta)$. Let $h \in \text{Fix}(\delta)$. As $h \in A_0 *_{C_0} B_0$, it has a normal form, say, $h = c_h a_1b_1 \cdots a_nb_n$ where $c_h \in C_0$, each $a_i \in A_0 \setminus C_0$ and each $b_i \in B_0 \setminus C_0$. Now 
    \[
    c_h a_1(b_1)^{c_0} \cdots a_n(b_n)^{c_0} = \delta (h) = h = c_h a_1b_1 \cdots a_nb_n
    \implies (b_1)^{c_0} \cdots a_n(b_n)^{c_0} b_n^{-1}a_n^{-1} \cdots b_1^{-1} = 1
    \]

Then there must be cancellations, in particular $(b_n)^{c_0} b_n^{-1} \in C_0$ (otherwise it will contradict the fact that every reduced alternating word in $A_0 *_{C_0} B_0$ is non-trivial). This implies that $b_n c_0^{-1} b_n^{-1} \in C_0 \implies C_0^{b_n} \cap C_0\neq 1$. But $C_0$ is maximal abelian in $B_0$, hence malnormal in $B_0$. Thus, we obtain $b_n \in C_0$. Then $(b_n)^{c_0} b_n^{-1} = 1$ will reduce $(b_1)^{c_0} \cdots a_n(b_n)^{c_0} b_n^{-1}a_n^{-1} \cdots b_1^{-1} = 1$ to $(b_1)^{c_0} \cdots a_{n-1}(b_{n-1})^{c_0} b_{n-1}^{-1}a_{n-1}^{-1} \cdots b_1^{-1} = 1$. By the same argument as before, we can show that $b_{n-1} \in C_0$. Continuing the same way we get each $b_i$ is in $C_0$ implies hence $h \in A$. Hence $\text{Fix} (\delta) = A$.

\vs

We will next show that $C$ is maximal abelian in $G$. Let $C_G$ be the maximal abelian subgroup of $G$ containing $C$. We will show that $C_G \subseteq C$. Let $g \in C_G$. Then $g$ has a unique normal form, say, $g = c' a_1b_1 \cdots a_n b_n$, for $c' \in C_0$, $a_i \in A_0 \setminus C_0$ and $b_i \in B_0 \setminus C_0$. Since $g \in C_G$, we have that $g$ commutes with $c_0$, and hence $c_0 g c_0^{-1} = g$. This yields:

$$ c' a_1^{c_0}b_1^{c_0} \cdots a_n^{c_0} b_n^{c_0} = c' a_1b_1 \cdots a_n b_n \implies a_1b_1 \cdots a_n b_n (b_n^{-1})^{c_0}(a_n^{-1})^{c_0} \cdots (b_1^{-1})^{c_0} (a_1^{-1})^{c_0} = 1$$

By using the same method for proving $\text{Fix} (\delta) = A$ above (which boiled down to centralizers of $C_0$ being maximal abelian and hence malnormal), we can deduce that each $a_i$ must be in the centralizer $C_A$ of $C_0$ in $A_0$ and each $b_i$ must be in the centralizer of $C_0$ in $B_0$, which is $C_0$ since $C_0$ is maximal abelian in $B_0$. This implies that $g \in A_0$. This proves the claim.

Therefore, $\text{Fix}(\delta) = A_0 = A$ and $C = C_A = C_G$ is maximal abelian in $G$ and hence maximal abelian in both $A$ and $B$.

\end{proof}

We will assume from now on that $C$ is maximal abelian in $G$, and that $A$ and $B$ are non-abelian so that the splitting is non-trivial. We will need the following theorem, which follows from \cite[Theorem 1.3]{Finiteness_properties_graphs_of_groups} since $C$ is finitely generated, as it is either cyclic or contained in an abelian free factor of $G$. 

\begin{lemma}
    \label{lemma: A and B are finitely generated in separating case}
    Let $G$ be a free product of free abelian groups. Assume that $G$ is finitely generated. If $G$ splits as an amalgam 

    $$G = A *_C B$$ with $C$ abelian, then $A$ and $B$ are finitely generated.
\end{lemma}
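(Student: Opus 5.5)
We plan to deduce the statement from the general finiteness result for graphs of groups cited just before it, \cite[Theorem 1.3]{Finiteness_properties_graphs_of_groups}. That result says that if a finitely generated group is the fundamental group of a finite graph of groups whose edge groups are finitely generated, then the vertex groups are finitely generated. Alternatively, one can use the elementary fact that if $G = A *_C B$ is finitely generated and $C$ is finitely generated, then $A$ and $B$ are finitely generated. So the whole proof reduces to checking that $C$ is finitely generated.

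\textbf{Step 1: the elementary argument for finite generation of $A$ and $B$.} We record this argument in case one wants to avoid the citation. Take a finite generating set $S$ of $G$ and write each $s \in S$ in normal form as in Lemma \ref{lemma: normal form theorem for amalgamated free products}. Let $A'$ be the subgroup generated by the finitely many $A$-syllables appearing in these normal forms together with a finite generating set of $C$, and define $B'$ analogously. Then $C \leq A' \leq A$ and $C \leq B' \leq B$. The subgroup $\langle A', B' \rangle \cong A' *_C B'$ contains $S$, so it equals $G$. Comparing normal forms in $A' *_C B'$ and $A *_C B$ (an element of $A$ has length zero in $A*_CB$, hence in $A'*_CB'$ it must also be of length zero) then forces $A = A'$ and $B = B'$.

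\textbf{Step 2: $C$ is finitely generated.} This is the only place where the hypothesis on $G$ enters, and it is the step that needs care. Apply Kurosh's subgroup theorem (Theorem \ref{thm:Kurosh}) to the abelian subgroup $C \leq G = G_1 * \cdots * G_k$. The decomposition $C = F(X) * (*_j H_j^{\dag})$ is a free product, and a free product of two nontrivial groups is non-abelian (it contains a free subgroup of rank two). Hence $C$ has at most one nontrivial free factor. So either $C$ is infinite cyclic (the case $|X| = 1$, with no $H_j$), or $C$ is trivial, or $C$ is conjugate into a subgroup $H_j \leq G_i \cong \Z^{n_i}$. In the last case $C$ is a subgroup of a finitely generated free abelian group, hence finitely generated of rank at most $n_i$.

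The main subtlety is that Kurosh is stated for arbitrary subgroups. One must make sure that $C$ being abelian genuinely rules out two factors, or a free factor of rank at least two; the free-product argument above settles this. With $C$ finitely generated, either the cited theorem or Step 1 gives that $A$ and $B$ are finitely generated.
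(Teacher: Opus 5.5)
Your proposal is correct and follows the paper's own justification: the paper observes that $C$ is finitely generated because it is cyclic or conjugate into an abelian free factor of $G$ (exactly what your Kurosh argument in Step 2 establishes), and then invokes \cite[Theorem 1.3]{Finiteness_properties_graphs_of_groups}. Your Step 1 is a valid optional substitute for that citation, with one wording fix: the parenthetical should say that an element of $A\setminus C$ has reduced length one, with its single syllable in $A$ (not length zero), and uniqueness of reduced length and syllable type then forces it into $A'$.
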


\subsection{Step I: Finding a canonical splitting of $\Gamma_{\delta}$}
\label{subsection: Step 1}

We first find a splitting for $\Gamma_\delta$. We let $A=\langle S_A \vert R_1 \rangle$, $B=\langle S_B \vert R_B \rangle$ and $i_A:C \to A$ and $i_B: C\to B$ be the embeddings of the edge group $C$ into $A$ and $B$ respectively.

By definition, we have that 
\begin{align*}
        \Gamma_\delta &= \langle A,B,t \, \vert \, R_A, R_B, tat^{-1}=a, tbt^{-1}=c_0bc_0^{-1}, i_A(c)=i_B(c) \quad \forall a\in A, \forall b \in B, \forall c\in C \rangle \\
        &= \langle A,B,t \, \vert \, R_{A}, R_{B}, tat^{-1}=a, (c_{0}^{-1}t)b(c_{0}^{-1}t)^{-1}=b, i_{A}(c)=i_{B}(c) \quad \forall a \in A, \forall b\in B, \forall c\in C \rangle \\
        &=(A \oplus \langle t\rangle) *_{C \oplus \langle t \rangle =C \oplus \langle t_1 \rangle} (B \oplus \langle t_1 \rangle) \\
        &=(A \oplus \langle t\rangle) *_{\mathbb Z^{m+1}} (B \oplus \langle t_1 \rangle)
\end{align*}

with $t_1=c_0^{-1}t$ and $m$ the rank of the abelian group $C$. We denote $\tilde{A} = A \oplus \langle t \rangle$, $\tilde{B} = B \oplus \langle t_1 \rangle$ and $\tilde{C} = C \oplus \langle t \rangle = C \oplus \langle t_1 \rangle$. Then by above we have the following splitting of $\Gamma_{\delta}$:

$$\Gamma_{\delta} = \tilde{A} *_{\tilde{C}} \tilde{B}$$

We let $T$ be the Bass-Serre tree of $\Gamma_\delta$ with respect to this splitting. 

\begin{lemma}
    \label{lemma: acylindricity of separating mapping torus}
    The action of $\Gamma_\delta$ on $T$ is $2$-acylindrical.
\end{lemma}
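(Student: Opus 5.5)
We show that the pointwise stabilizer of any geodesic segment of length $3$ in $T$ is trivial. Since $T$ is the Bass--Serre tree of $\Gamma_\delta = \tilde{A} *_{\tilde{C}} \tilde{B}$, vertices alternate between translates of $\tilde{A}$ and $\tilde{B}$, and edge stabilizers are conjugates of $\tilde{C}$. After translating by an element of $\Gamma_\delta$, a segment of length $3$ contains a path $g_1\tilde{C}, \tilde{C}, g_2\tilde{C}$ of three consecutive edges whose middle edge is the standard edge $\tilde{C}$ joining $\tilde{A}$ and $\tilde{B}$. Its outer edges are $a\tilde{C}$ with $a \in \tilde{A}\setminus \tilde{C}$ at the vertex $\tilde{A}$, and $b\tilde{C}$ with $b \in \tilde{B}\setminus\tilde{C}$ at the vertex $\tilde{B}$. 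The pointwise stabilizer is therefore $\tilde{C} \cap a\tilde{C}a^{-1} \cap b\tilde{C}b^{-1}$, and it suffices to show that $\tilde{C}\cap a\tilde{C}a^{-1}\cap b\tilde{C}b^{-1}$ is trivial.

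The first step computes $\tilde{C}\cap a\tilde{C}a^{-1}$ inside $\tilde{A} = A\oplus\langle t\rangle$. Write $a = a' t^k$ with $a' \in A\setminus C$. Since $t$ is central in $\tilde{A}$, we get $a\tilde{C}a^{-1} = a'Ca'^{-1}\oplus\langle t\rangle$. By Theorem \ref{lemma: enlarging the splitting in separating case}, $C$ is maximal abelian in $A$. Free products of free abelian groups are CSA, so $C$ is malnormal in $A$ and $C\cap a'Ca'^{-1} = 1$. Projecting to the $A$-factor then gives $\tilde{C}\cap a\tilde{C}a^{-1} = \langle t\rangle$: an element $ct^j$ lies in $a'Ca'^{-1}\oplus\langle t\rangle$ exactly when $c \in a'Ca'^{-1}\cap C = 1$.

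The symmetric computation in $\tilde{B} = B\oplus\langle t_1\rangle$, using that $t_1$ is central in $\tilde{B}$ and that $C$ is malnormal in $B$, gives $\tilde{C}\cap b\tilde{C}b^{-1} = \langle t_1\rangle$. The stabilizer is therefore contained in $\langle t\rangle\cap\langle t_1\rangle$, computed inside $\tilde{C} = C\oplus\langle t\rangle$. Suppose $t_1^j = t^i$. Since $t_1 = c_0^{-1}t$, this reads $c_0^{-j}t^j = t^i$, so $i = j$ and $c_0^{j} = 1$. Because $c_0\neq 1$ and $C$ is torsion-free, $j = 0$, and the stabilizer is trivial.

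The main point needing care is the bookkeeping of which centralizing element, $t$ or $t_1$, belongs to which vertex group. This is what makes the two intersections different cyclic subgroups of $\tilde{C}$, and it is where $c_0 \neq 1$ is used. One should also confirm the reduction to a segment whose middle edge is the standard edge. Any geodesic segment of length $3$ has a middle edge, and translating it to $\tilde{C}$ puts the two outer edges in the stated coset form, with one outer edge adjacent to each of the vertices $\tilde{A}$ and $\tilde{B}$.
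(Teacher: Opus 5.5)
Your proof is correct and follows the paper's argument. You reduce to a length-$3$ segment with the standard edge in the middle. Malnormality of $C$ in $A$ and in $B$ then gives $\tilde{C}\cap a\tilde{C}a^{-1}=\langle t\rangle$ and $\tilde{C}\cap b\tilde{C}b^{-1}=\langle t_1\rangle$, and $\langle t\rangle\cap\langle t_1\rangle=1$ follows from $t_1=c_0^{-1}t$ with $c_0\neq 1$. Your version is somewhat more careful than the paper's: the paper writes $t_1=c_0t$ (a harmless slip), and it leaves implicit the decompositions $a=a't^k$, $b=b't_1^k$ that justify pulling the central factor out of each intersection.
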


\begin{proof}
    Consider a path $[gv, u]\cup[u, v]\cup[v, hu] = ge \cup e \cup he$ for any $g \in \Gamma_u\setminus\Gamma_e$ and $h \in \Gamma_v\setminus\Gamma_e$, where $e = [u, v] = [\tilde{A},\tilde{B}]$ is the fundamental domain in the Bass-Serre tree $T$. We will prove that the three edge stabilizers intersect trivially. Note that this will be sufficient to prove the $2-$acylindricity of the action of $\Gamma_\delta$ on $T$ since any geodesic segment of length 3 is a $\Gamma$-translate of the segment $ge \cup e \cup he$. $\mathrm{Stab}(ge) \cap \mathrm{Stab}(e) = g\mathrm{Stab}(e)g^{-1} \cap \mathrm{Stab}(e) = g(C \oplus \langle t \rangle)g^{-1} \cap (C \oplus \langle t \rangle) = (gCg^{-1} \cap C) \oplus \langle t \rangle = \langle t \rangle$, because $C$ is malnormal in $A$ and $B$. Similarly, $\mathrm{Stab}(he) \cap \mathrm{Stab}(e) = \langle t_1 \rangle$. As $t_1 = c_0t$, $\langle t_1 \rangle \cap \langle t \rangle = 1$.
\end{proof}

\begin{lemma}
    \label{lem: Canonicity lemma for separating Dehn twists}
    Let $T$ be the Bass--Serre tree of the above splitting of $\Gamma$. Let $S$ be a minimal $\Gamma$-tree satisfying the following: 

    \begin{enumerate}
        \item Vertex stabilizers of $S$ are of the form $H \rtimes \Z$ where $H < G$ is non-trivial and where the semi-direct product is a direct product if $H$ is non-abelian. 
        \item Edge stabilizers of $S$ are abelian.
        \item The action of $\Gamma$ on $S$ is 2-acylindrical.
    \end{enumerate}
    Then $T$ and $S$ are $\Gamma$-equivariantly isomorphic.
\end{lemma}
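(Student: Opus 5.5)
The plan is to show that $T$ and $S$ have the same elliptic subgroups, and then use a standard deformation-space/rigidity argument to upgrade this to an equivariant isomorphism.

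\textbf{Step 1: $\tilde{A}$ and $\tilde{B}$ are elliptic in $S$.*} Consider $\tilde A = A\oplus\langle t\rangle$; the case of $\tilde B = B \oplus \langle t_1\rangle$ is the same. Since $\tilde A$ has infinite centre $\langle t\rangle$ and $A$ is non-abelian and finitely generated (Lemma \ref{lemma: A and B are finitely generated in separating case}), Proposition \ref{prop: properties of groups acting acylindrically on trees} rules out a non-elementary acylindrical action of $\tilde A$ on $S$. So the action of $\tilde A$ on $S$ is either elliptic or elementary.
\begin{itemize}
\item Suppose $t$ is hyperbolic. Then $\tilde A$ preserves the axis of $t$. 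By $2$-acylindricity, $A$ would act on that line with a kernel fixing a long segment, hence trivial kernel. So $A$ would embed in $\mathrm{Isom}(\mathbb{R})$ and be virtually abelian, which is a contradiction.
\item So $t$ is elliptic. Its fixed subtree is $\tilde A$-invariant, and by acylindricity it has diameter at most $2$. A bounded invariant subtree yields a fixed point (after passing to a midpoint; there are no inversions, or we subdivide). Hence $\tilde A$ fixes a point of $S$.
\end{itemize}

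\textbf{Step 2: vertex stabilizers of $S$ are elliptic in $T$.} Take $\Gamma_x = H\rtimes\Z$ for a vertex $x$ of $S$.
\begin{itemize}
\item If $H$ is non-abelian, $\Gamma_x = H\times\langle s\rangle$ has centre containing $s$, and the same argument, now using Lemma \ref{lemma: acylindricity of separating mapping torus}, shows that $\Gamma_x$ is elliptic in $T$.
\item If $H$ is abelian, $\Gamma_x$ is a finitely generated solvable group (here one needs $H$ finitely generated, which should follow from Kurosh, Theorem \ref{thm:Kurosh}). By acylindricity its action on $T$ is either elliptic or preserves a line. The line case must be excluded using the structure of $\tilde{C}$ and the stabilizers $\langle t\rangle$, $\langle t_1\rangle$ computed in the proof of Lemma \ref{lemma: acylindricity of separating mapping torus}. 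I expect this abelian, possibly non-direct case to be the most delicate part.
\end{itemize}

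\textbf{Step 3: build the morphisms.} Steps 1 and 2 show that $T$ and $S$ lie in the same deformation space. Equivariant maps $f:T\to S$ and $g:S\to T$ sending vertices to fixed points are then obtained by choosing images of the two vertices $u,v$ of the fundamental edge and extending equivariantly.

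\textbf{Step 4: rigidity.} This is the main obstacle. I would show that $T$ is the unique reduced tree in its deformation space, and that $S$ is reduced. Uniqueness is the crux and I would argue it as follows.
\begin{itemize}
\item The edge group $\tilde{C}$ is a proper subgroup of both $\tilde{A}$ and $\tilde{B}$, since $A$ and $B$ are non-abelian. So $T$ has no collapsible edges, and a single-edge amalgam admits no slide moves.
\item By Levitt/Guirardel–Levitt, a deformation space containing such a reduced tree with no possible slides is rigid, so every reduced tree in it is isomorphic to $T$.
\item Minimality of $S$ together with the hypothesis $H\neq 1$ (no trivial vertex groups) should force $S$ to be reduced. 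If $S$ had a collapsible edge whose stabilizer equals an endpoint stabilizer $H\rtimes\Z$, acylindricity would be violated along the orbit.
\end{itemize}
Alternatively, I would try to show directly that the composite $g\circ f$ is the identity and that $f$ is injective. The argument would be that folding, in the sense of Bestvina–Feighn, two distinct edges at a vertex identifies edge stabilizers whose intersection is trivial by malnormality of $C$.
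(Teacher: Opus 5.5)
Your Step 1 is sound: using the central element $t$ instead of the paper's centralizer argument works. A deformation-space/rigidity route could also work in principle. As written, though, the proposal never proves the one point where the lemma has content.

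\textbf{The gap: reducedness of $S$.} Your argument reduces to the Step 4 claim that $S$ is reduced, and the only support offered is that ``a collapsible edge would violate acylindricity along the orbit''. That is not a general principle. Subdividing every edge of a $1$-acylindrical amalgam produces a minimal $2$-acylindrical tree with nontrivial abelian vertex groups and collapsible edges. Moreover, hypothesis (1) explicitly allows vertex stabilizers such as $\tilde C=C\oplus\langle t\rangle$. The barycentric subdivision of $T$ itself is minimal and satisfies (1) and (2); it fails (3) only because of $t$.

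What rules out such an $S$ is a feature of $T$ you never use: $t\in\tilde C$ is central in $\tilde A$, so $\tilde C\cap\tilde C^{g}\supseteq\langle t\rangle$ for every $g\in\tilde A$. The paper argues as follows.
\begin{enumerate}
\item Let $p,q$ be the vertices of $S$ fixed by $\tilde A,\tilde B$. They are unique, since edge stabilizers are abelian.
\item Since $\tilde C\neq 1$ fixes $[p,q]$, $2$-acylindricity gives $d(p,q)\le 2$.
\item Suppose $d(p,q)=2$ with middle vertex $x$. Pick $g\in\tilde A$ with $gx\neq x$; this is possible because $\tilde A$ fixes only $p$.
\item Then $t$ fixes $q$ (as $t\in\tilde C\subseteq\tilde B$) and $gq$ (as $tg=gt$). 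Hence $t$ fixes the length-$4$ geodesic $q,x,p,gx,gq$, contradicting $2$-acylindricity.
\end{enumerate}
Your fallback folding argument fails for the same reason, in the opposite direction. It assumes edge stabilizers at a vertex of $T$ intersect trivially ``by malnormality of $C$''. But $\tilde C$ is not malnormal in $\tilde A$ (nor in $\tilde B$, because of $t_1$), and the proof relies on exactly this non-triviality.

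\textbf{The abelian case of Step 2.} You leave this open, but it is both easy and avoidable.

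It is easy. Suppose $H$ contains a hyperbolic element. Since $H$ is abelian and normal in $\Gamma_x$, $\Gamma_x$ preserves its axis. $2$-acylindricity of $T$ makes the action on that line faithful, so $\Gamma_x$ embeds in $D_\infty$ and is virtually cyclic, which is impossible since $H\neq 1$. Otherwise $H$ is finitely generated (by Kurosh) with all elements elliptic, so it fixes a subtree of diameter at most $2$. That subtree is $\Gamma_x$-invariant, so $\Gamma_x$ fixes a vertex.

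It is avoidable. The paper only needs ellipticity in $T$ of $\Gamma_p\supseteq\tilde A$ and $\Gamma_q\supseteq\tilde B$, where your central-element argument applies. This gives $\Gamma_p=\tilde A$, $\Gamma_q=\tilde B$, and injectivity of the equivariant vertex map $T\to S$. Once $d(p,q)=1$ is established, the image of $T$ is a $\Gamma$-invariant subtree, hence all of $S$ by minimality. So no other vertex stabilizers of $S$ need analysis, and no rigidity theorem for deformation spaces is required. If you keep the rigidity route, you still have to supply the $d(p,q)=2$ exclusion above to get reducedness of $S$, at which point the citation of Levitt's theorem becomes superfluous.
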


\begin{proof}
    We will first construct a $\Gamma$-equivariant injection between the vertex sets $V(T)$ and $V(S)$ of $T$ and $S$. 

    We first show that $\tilde{A}$ is elliptic on $S$. 
    
    \begin{lemma}
        \label{lem: tilde A elliptic on S separating}
        Let $T$ and $S$ be as in the statement of Lemma \ref{lem: Canonicity lemma for separating Dehn twists}. Then $\tilde{A}$ and $\tilde{B}$ fix unique respective vertices $v_{\tilde{A}}$ and $v_{\tilde{B}}$ of $S$. 
    \end{lemma}

    \begin{proof}[Proof of Lemma \ref{lem: tilde A elliptic on S separating}]
        Suppose for contradiction that $\tilde{A}$ had some $g \in \tilde{A}$ hyperbolic on $S$. We will show that the centralizer $C_{\Gamma}(g)$ of $g$ in $\Gamma$ contains $\Z^2$. Indeed, if $g \notin \langle t \rangle$, then $\langle g, t \rangle < C_{\Gamma}(g)$ and $\langle g, t \rangle \cong \Z^2$, and if $g \in \langle t \rangle$, then taking any non-trivial $a \in A$ yields $\langle a, g \rangle < C_{\Gamma}(g)$ and $\langle a, g \rangle \cong \Z^2$. Therefore, $C_{\Gamma}(g)$ contains a $\Z^2$ subgroup. This contradicts Proposition \ref{prop: properties of groups acting acylindrically on trees} (1), since by minimality of $\Gamma \curvearrowright S$ and the fact that $\Gamma$ is not virtually cyclic, we have that $\Gamma \curvearrowright S$ is non-elementary acylindrical. 

    Thus, $\tilde{A}$ fixes a vertex $v_{\tilde{A}}$ of $S$. This vertex is unique since otherwise $\tilde{A}$ fixes an edge of $S$, which implies that $\tilde{A}$ is abelian, a contradiction.

    Similarly, $\tilde{B}$ fixes a unique vertex $v_{\tilde{B}}$ of $S$. 
    \end{proof}

By Lemma \ref{lem: tilde A elliptic on S separating}, it follows that for each $g \in \Gamma$, the stabilizer of the vertex $g \tilde{A}$ (respectively, $g \tilde{B}$) fixes the unique vertex $g v_{\tilde{A}}$ (respectively, $g v_{\tilde{B}}$) of $S$. This yields a $\Gamma$-equivariant map $f: V(T) \to V(S)$ defined by a sending a vertex $v$ of $T$ to the unique of $S$ stabilized by $\mathrm{Stab}_{\Gamma}(v)$

    We now show that $f$ is injective. It suffices to show that the stabilizer $\Gamma_{v_{\tilde{A}}}$ of $v_{\tilde{A}}$ has unique fixed vertex $\tilde{A}$. It will follow similarly that $\Gamma_{v_{\tilde{B}}}$ fixes the unique vertex $\tilde{B}$ of $T$. 
    
    We have that the stabilizer $\Gamma_{v_{\tilde{A}}} \cong H \rtimes \Z$ with $H$ non-abelian. Indeed, we have $\tilde{A} < \Gamma_{v_{\tilde{A}}}$ by Lemma \ref{lem: tilde A elliptic on S separating}, and $A$ is a free product of free abelian groups with at least 2 free factors, hence $A$ contains the free group $F_2$, and thus so does $\tilde{A}$. Therefore, $H$ cannot be abelian, since otherwise $H \rtimes \Z$ would be solvable and contain $F_2$, a contradiction. Since $H$ is non-abelian, we have that $\Gamma_{v_{\tilde{A}}} \cong H \oplus \Z$, and so arguing as in Lemma \ref{lem: tilde A elliptic on S separating}, we obtain that $\Gamma_{v_{\tilde{A}}}$ fixes a unique vertex of $T$. This vertex must be $\tilde{A}$, since otherwise $\tilde{A}$ would fix more than one vertex, thus it would fix an edge of $T$, and hence would be abelian. Similarly, it follows that $gv_{\tilde{A}}$ fixes the unique vertex $g\tilde{A}$ of $T$, and similarly for $g v_{\tilde{B}}$. This gives a $\Gamma$-equivariant left inverse $h$ of $f$ defined by $h(gv_{\tilde{A}}) = \tilde{A}$ and $h(gv_{\tilde{B}}) = \tilde{B}$. Therefore, $f$ is injective, and hence a $\Gamma$-equivariant bijection onto its image. 
    
    We now show that $f$ extends to a map of trees. To do this, we need to show that $[v_{\tilde{A}}, v_{\tilde{B}}]$ is an edge of $S$.
    \begin{lemma}
        \label{lem: map on vertices is tree map}
        Let $f: V(T) \to V(S)$ be the map obtained from Lemma \ref{lem: tilde A elliptic on S separating}. Then $[f(\tilde{A}), f(\tilde{B})] = [v_{\tilde{A}}, v_{\tilde{B}}]$ is an edge in $S$.  
    \end{lemma}

    \begin{proof}
        To show that $[v_{\tilde{A}}, v_{\tilde{B}}]$ is an edge in $S$, we need to show that $d_S(v_{\tilde{A}}, v_{\tilde{B}}) = 1$. We cannot have $d_S(v_{\tilde{A}}, v_{\tilde{B}}) = 0$ since $f$ is injective on $V(T)$ and we cannot have $d_S(v_{\tilde{A}}, v_{\tilde{B}}) > 2$ by 2-acylindricity of $S$. Thus, it remains to rule out the case $d_S(v_{\tilde{A}}, v_{\tilde{B}}) = 2$. 

    If $d_S(v_{\tilde{A}}, v_{\tilde{B}}) = 2$, let $x$ be the middle vertex of the segment $[v_{\tilde{A}}, v_{\tilde{B}}]$. Since $x \neq v_{\tilde{A}}$ and $v_{\tilde{A}}$ is the only vertex of $S$ that $\tilde{A}$ fixes, we have that $\tilde{A}$ does not fix $x$, so there exists $g \in \tilde{A}$ such that $gx \neq x$. Then the segment $[v_{\tilde{B}}, gv_{\tilde{B}}]$ of $S$ has length at least 4, and hence has trivial pointwise stabilizer by 2-acylindricity of $S$. However, by $\Gamma$-equivariance of $f$, for each $g \in \tilde{A}$, we have 

    $$\mathrm{Stab}([v_{\tilde{B}}, gv_{\tilde{B}}]) = \Gamma_{v_{\tilde{B}}} \cap \Gamma_{v_{\tilde{B}}}^g = \tilde{B} \cap \tilde{B}^g = \tilde{C} \cap \tilde{C}^g$$

    Since $C$ is malnormal in $A$, we have that $\tilde{C} \cap \tilde{C}^g = \langle t \rangle \neq 1$. Thus, we have a contradiction. We conclude that $[v_{\tilde{A}}, v_{\tilde{B}}]$ is an edge of $S$, i.e.\ that $d_S(v_{\tilde{A}}, v_{\tilde{B}}) = 1$. 
    \end{proof}
    
    Hence, $f$ is an isomorphism of $\Gamma$-trees onto its image $f(T)$. But since $S$ is minimal, it follows that $f(T) = S$. Hence, $f$ is a $\Gamma$-equivariant isomorphism from $T$ to $S$.

\end{proof}

\subsection{Step II: Finding a fibre and splitting preserving isomorphism.}
The following result is well-known but we record its proof for the convenience of the reader. 

\begin{lemma}
\label{lem:mapping_tori_iso_implies_conjuguate}
Let $G$ be any group and $\psi, \phi$ be automorphisms of $G$. Then $\psi$ and $\phi$ are conjugate in $\Aut(G)$ if and only if there is an isomorphism $f: G \rtimes_{\phi} \langle t \rangle \to G \rtimes_{\psi} \langle s \rangle$ such that $f(G) \subseteq G$ and $f(t)=s$.
\end{lemma}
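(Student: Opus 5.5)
We prove the two implications separately, writing $\Gamma_\phi = G \rtimes_\phi \langle t\rangle$ (so $tgt^{-1} = \phi(g)$) and $\Gamma_\psi = G\rtimes_\psi \langle s\rangle$ (so $sgs^{-1}=\psi(g)$).

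For the forward direction, suppose $\psi = \alpha\phi\alpha^{-1}$ for some $\alpha \in \Aut(G)$. Define $f$ on generators by $f(g) = \alpha(g)$ for $g\in G$ and $f(t)=s$. To see that $f$ is a homomorphism, we check the defining relations. The relations of $G$ are preserved because $\alpha$ is a homomorphism. The conjugation relation is preserved because
$$f(t)f(g)f(t)^{-1} = s\,\alpha(g)\,s^{-1} = \psi(\alpha(g)) = \alpha(\phi(g)) = f(\phi(g)).$$
The same recipe applied to $\alpha^{-1}$, together with the relation $\phi = \alpha^{-1}\psi\alpha$, gives a homomorphism $\Gamma_\psi\to\Gamma_\phi$. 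It is inverse to $f$ on generators, so $f$ is an isomorphism. By construction $f(G)=\alpha(G) = G$ and $f(t)=s$.

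For the converse, let $f$ be an isomorphism with $f(G)\subseteq G$ and $f(t)=s$. The first step is to upgrade the inclusion $f(G)\subseteq G$ to an equality; this is the only point where anything beyond bookkeeping is needed. Every element of $\Gamma_\psi$ can be written uniquely as $h s^n$ with $h\in G$ and $n\in\Z$. Since $f$ is surjective, each $h\in G$ equals $f(x)$ for some $x\in\Gamma_\phi$, and we may write $x = g t^n$ with $g \in G$. Then
$$h = f(g)\,s^n \quad\text{with } f(g)\in G,$$
so uniqueness of the normal form, equivalently projection to the quotient $\Gamma_\psi/G\cong\Z$, forces $n=0$. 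Thus $h=f(g)\in f(G)$, and hence $f(G)=G$.

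It follows that $\alpha := f|_G$ is an injective surjection $G\to G$, that is, $\alpha\in\Aut(G)$. Applying $f$ to the relation $tgt^{-1}=\phi(g)$ gives
$$s\,\alpha(g)\,s^{-1} = \alpha(\phi(g)),$$
that is, $\psi(\alpha(g)) = \alpha(\phi(g))$ for every $g\in G$. Therefore $\psi\alpha=\alpha\phi$, so $\psi=\alpha\phi\alpha^{-1}$ and $\phi$, $\psi$ are conjugate in $\Aut(G)$.
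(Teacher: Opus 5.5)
Your proof is correct and follows the same approach as the paper: in one direction you build the isomorphism on generators from a conjugating automorphism, and in the other you restrict $f$ to $G$ and apply it to the relation $tgt^{-1}=\phi(g)$. Your argument is slightly more complete than the paper's, since you use the projection $\Gamma_\psi \to \Z$ to upgrade $f(G)\subseteq G$ to $f(G)=G$, a step the paper takes for granted when it writes $f|_G\in\Aut(G)$.
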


\begin{proof}
    Suppose first that there exists such an isomorphism $f$. Write $F=f|_{G}\in \Aut(G)$, then for all $\gamma \in G$,
    \[
    F^{-1} \circ \phi \circ F (\gamma) = F^{-1}(\phi(F(\gamma)) =f^{-1}(\phi(F(\gamma))=
    f^{-1}(tF(\gamma)t^{-1})= f^{-1}(t) \gamma f(t^{-1}) =s\gamma s^{-1} = \psi(\gamma)
    \]

    Now suppose that $\psi$ and $\phi$ are conjugate in $\Aut(G)$ by an automorphism $F \in \Aut(G)$, i.e.\ that $\psi = F \phi F^{-1}$. Define a map $f: G \rtimes_{\phi} \langle t \rangle \to G \rtimes_{\psi} \langle s \rangle$ by putting $f \vert_G = F$ and $f(t) = s$. We claim that $f$ extends to an isomorphism $G \rtimes_{\phi} \langle t \rangle \to G \rtimes_{\psi} \langle s \rangle$. Indeed, to first check that $f$ is a homomorphism we need to check that the relation 

    $$tgt^{-1} = \phi(g)$$

    is preserved by $f$, i.e.\ that $f(t)f(g)f(t)^{-1} = f(\phi(g))$. We have: 

    \begin{align*}
        f(t)f(g)f(t)^{-1} &= s F(g) s^{-1} \\
        &= \psi(F(g)) \\
        &= F(\phi(g)) \text{, since $\psi = F\phi F^{-1}$}\\
        &= f(\phi(g))
    \end{align*}
    Therefore, $f$ extends to a homomorphism. 

    We have that $f$ is invertible since $f^{-1}$ is given by $f^{-1} \vert_G = F^{-1}$ and $f^{-1}(s) = t$ (that $f^{-1}$ is a homomorphism follows exactly as the argument to show that $f$ is a homomorphism). Therefore, $f$ is an isomorphism such that $f(G) \subseteq G$ and $f(t)=s$.
\end{proof}

We record another algebraic lemma that we will need. 

\begin{lemma}
    \label{lem:normalizer of A tilde}
    Let $G = A *_C B$ be a maximal splitting of $G$ with respect to some Dehn twist $\delta$. Let $\Gamma = G \rtimes_{\delta} \langle t \rangle$. If $\gamma \in \Gamma$ is such that $\tilde{A} \subseteq \tilde{A}^{\gamma}$, then $\gamma \in \tilde{A}$. Similarly if $\gamma \in \Gamma$ is such that $\tilde{B} \subseteq \tilde{B}^{\gamma}$, then $\gamma \in \tilde{B}$. In particular, the normalizers in $\Gamma$, $N_{\Gamma}(\tilde{A}) = \{\gamma \in \Gamma \, | \, \tilde{A}^{\gamma} = \tilde{A}\} =  \tilde{A}$ and $N_{\Gamma}(\tilde{B}) = \{\gamma \in \Gamma \, | \, \tilde{B}^{\gamma} = \tilde{B}\} = \tilde{B}$. 
\end{lemma}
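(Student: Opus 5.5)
We argue through the action of $\Gamma$ on the Bass--Serre tree $T$ of the splitting $\Gamma = \tilde{A} *_{\tilde{C}} \tilde{B}$. Let $u$ be the vertex fixed by $\tilde{A}$ (the coset $\tilde{A}$) and $v$ the vertex fixed by $\tilde{B}$. Write $\tilde{A}^{\gamma} = \gamma^{-1}\tilde{A}\gamma$, the stabilizer of $\gamma^{-1}u$; with the other convention we simply replace $\gamma$ by $\gamma^{-1}$ throughout. The hypothesis $\tilde{A} \subseteq \tilde{A}^{\gamma}$ then says that $\tilde{A}$ fixes the vertex $\gamma^{-1}u$. The goal is to show $\gamma^{-1}u = u$, since then $\gamma \in \mathrm{Stab}(u) = \tilde{A}$.

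\textbf{Step 1: $\tilde{A}$ fixes a unique vertex.} Suppose $\tilde{A}$ fixes two distinct vertices. Then it fixes the geodesic between them, and in particular an edge. Edge stabilizers are conjugates of $\tilde{C} \cong \Z^{m+1}$, so $\tilde{A}$ would be abelian. This is impossible because $A$ is non-abelian (a standing assumption), and indeed contains $F_2$, as noted in the proof of Lemma \ref{lem: Canonicity lemma for separating Dehn twists}. Hence $u$ is the only vertex fixed by $\tilde{A}$, so $\gamma^{-1}u = u$ and $\gamma \in \tilde{A}$.

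\textbf{Step 2: the same for $\tilde{B}$.} The identical argument applies, since $B$ is also assumed non-abelian.

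\textbf{Step 3: normalizers.} The statement about normalizers follows immediately. If $\tilde{A}^{\gamma} = \tilde{A}$ then in particular $\tilde{A} \subseteq \tilde{A}^{\gamma}$, so $\gamma \in \tilde{A}$. Conversely, $\tilde{A}$ clearly normalizes itself.

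The only point requiring care, and the likely obstacle, is the uniqueness of the fixed vertex. This hinges on edge stabilizers being abelian while $\tilde{A}$ is not, so the non-abelian standing assumption on $A$ and $B$ is essential.

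As a more hands-on alternative, one can use normal forms. Write $\gamma$ in normal form for the amalgam. If $\gamma \notin \tilde{A}$, pick non-commuting elements of $\tilde{A}$ outside $\tilde{C}$ to produce a contradiction with Lemma \ref{lemma: normal form theorem for amalgamated free products}. This is messier, so the tree argument is preferred.
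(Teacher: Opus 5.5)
Your proof is correct and is essentially the paper's argument: if $\gamma\notin\tilde{A}$, then $\tilde{A}$ fixes two distinct vertices of the Bass--Serre tree $T$ of $\Gamma=\tilde{A}*_{\tilde{C}}\tilde{B}$, hence fixes an edge and lies in an abelian edge stabilizer, contradicting that $A$ is non-abelian; the same argument works for $\tilde{B}$. The paper's convention is $\tilde{A}^{\gamma}=\gamma\tilde{A}\gamma^{-1}$, but your remark that the argument works under either convention is accurate, and the normal-form alternative is not needed.
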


\begin{proof}
    
    Let $\gamma$ be such that $\tilde{A} \subseteq \tilde{A}^{\gamma}$. Then $\tilde{A}$ fixes the vertices $\tilde{A}$ and $\gamma \tilde{A}$ in the Bass--Serre tree $T$ of the induced splitting of $\Gamma$. If $\gamma \notin \tilde{A}$, then $\gamma \tilde{A} \neq \tilde{A}$, and so $d_T(\tilde{A}, \gamma \tilde{A}) \geq 2$. Then the geodesic path in $T$ from $\tilde{A}$ to $\gamma \tilde{A}$ contains an edge of $T$, which implies that $\mathrm{Stab}_{\Gamma}([\tilde{A}, \gamma \tilde{A}])$ is abelian (since edge stabilizers of $T$ are abelian). However, $\mathrm{Stab}_{\Gamma}([\tilde{A}, \gamma \tilde{A}]) = \tilde{A} \cap \tilde{A}^{\gamma} = \tilde{A}$. Therefore, we obtain that $\tilde{A}$ is abelian, a contradiction. 
   Therefore, we must have that $\gamma \in \tilde{A}$. 

   The proof for $\tilde{B}$ is analogous, since $\tilde{B}$ is non-abelian. 
\end{proof}

\begin{lemma}
\label{lem:upgrade_to_fiber_preserving}
    Let $\delta, \delta' \in \Aut(G)$ be separating Dehn twists. As in Section \ref{subsection: Step 1}, decompose their mapping tori as $\Gamma_{\delta} =(A \oplus \langle t\rangle) *_{C \oplus \langle t \rangle} (B \oplus \langle t_1 \rangle)$ and $\Gamma_{\delta'} =(A' \oplus \langle t'\rangle) *_{C' \oplus \langle t' \rangle} (B' \oplus \langle t_1' \rangle)$. Then $\delta$ and $\delta'$ are conjugate in $\Aut(G)$ if and only if there are isomorphisms $\phi_A : A \to A'$ and $\phi_B : B \to B'$ such that: 
    
    \begin{enumerate}
        \item $\phi_A(C) = C'$ 
        \item $\phi_B(C) = C'$ 
        \item $\phi_A \vert_C = \phi_B \vert_C$ 
        \item $\phi_A(c_0) = \phi_B(c_0) = c_0'$
    \end{enumerate}

\end{lemma}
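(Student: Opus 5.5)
The plan is to prove the two directions separately. The backward direction is a direct construction. For the forward direction I will use Lemma \ref{lem:mapping_tori_iso_implies_conjuguate} together with the canonicity Lemma \ref{lem: Canonicity lemma for separating Dehn twists}, so that a fibre-preserving isomorphism of mapping tori is forced to respect the splittings.

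\emph{($\Leftarrow$)} Suppose $\phi_A,\phi_B$ are given with (1)--(4).
\begin{itemize}
\item Since $\phi_A|_C=\phi_B|_C$, the universal property of $G=A*_CB$ gives a homomorphism $F:G\to A'*_{C'}B'=G$ extending both maps.
\item Applying the same construction to $\phi_A^{-1},\phi_B^{-1}$ gives an inverse, so $F\in\Aut(G)$.
\item On $A'$ we have $F\delta F^{-1}=\mathrm{id}=\delta'$. For $b\in B$,
\[
F(\delta(b))=F(c_0bc_0^{-1})=\phi_B(c_0)F(b)\phi_B(c_0)^{-1}=c_0'F(b)c_0'^{-1}=\delta'(F(b)).
\]
\end{itemize}
Hence $F\delta F^{-1}=\delta'$.

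\emph{($\Rightarrow$)} Suppose $\delta'=\psi\delta\psi^{-1}$. By Lemma \ref{lem:mapping_tori_iso_implies_conjuguate} there is an isomorphism $f:\Gamma_\delta\to\Gamma_{\delta'}$ with $f|_G=\psi$ and $f(t)=t'$.

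\begin{enumerate}
\item \textbf{$\psi$ maps $A$ onto $A'$.} Since $\psi$ conjugates $\delta$ to $\delta'$, it carries $\mathrm{Fix}(\delta)$ onto $\mathrm{Fix}(\delta')$. By Theorem \ref{lemma: enlarging the splitting in separating case}, $A=\mathrm{Fix}(\delta)$ and $A'=\mathrm{Fix}(\delta')$, so $\psi(A)=A'$ and $f(\tilde A)=\tilde A'$.

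\item \textbf{Pull back the tree.} Let $T'$ be the Bass--Serre tree of $\Gamma_{\delta'}$, and let $S$ be $T'$ with $\Gamma_\delta$ acting through $f$. Then:
\begin{itemize}
\item $S$ is minimal and $2$-acylindrical (Lemma \ref{lemma: acylindricity of separating mapping torus} for $\Gamma_{\delta'}$).
\item Its edge stabilizers $f^{-1}(\tilde C')$ are abelian.
\item Its vertex stabilizers are conjugates of $f^{-1}(\tilde A')$ or $f^{-1}(\tilde B')$. Because $f(G)=G$ and $f(t)=t'$, these have the form $H\rtimes\langle t\rangle$ with $H=\psi^{-1}(A')$ or $\psi^{-1}(B')$ non-trivial. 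When $H$ is non-abelian the product is direct, as in $\Gamma_{\delta'}$: $t'$ is central in $\tilde A'$, and $t_1'$ is central in $\tilde B'$, where $t_1'$ differs from $t'$ by an element of $G$.
\end{itemize}
Hence Lemma \ref{lem: Canonicity lemma for separating Dehn twists} gives a $\Gamma_\delta$-isomorphism $T\cong S$. So $f$ carries vertex and edge stabilizers of $T$ to those of $T'$, preserving adjacency.

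\item \textbf{Normalize $f$.} Since $f(\tilde A)=\tilde A'$, the stabilizer $f(\tilde B)$ belongs to a neighbour of the vertex $\tilde A'$. Thus $f(\tilde B)=\gamma\tilde B'\gamma^{-1}$ for some $\gamma=a t'^k\in\tilde A'$ with $a\in A'$. Replace $f$ by $\mathrm{ad}_{\gamma^{-1}}\circ f$.
\begin{itemize}
\item $\mathrm{ad}_{a^{-1}}$ and $\mathrm{ad}_{t'^{-k}}$ fix $t'$. On $G$ they restrict to $\mathrm{ad}_{a^{-1}}$ and $\delta'^{-k}$, both of which commute with $\delta'$ (since $\delta'(a)=a$).
\item So the new map is still fibre-preserving, still sends $t\mapsto t'$, and still has $f(\tilde A)=\tilde A'$.
\item In addition $f(\tilde B)=\tilde B'$, hence $f(\tilde C)=\tilde C'$.
\end{itemize}
Intersecting with $G$ gives $\psi(B)=B'$ and $\psi(C)=C'$. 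Set $\phi_A=\psi|_A$ and $\phi_B=\psi|_B$; then (1)--(3) hold.

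\item \textbf{Condition (4).} Here $t_1=c_0^{-1}t$ is central in $\tilde B$, so $f(t_1)=\psi(c_0)^{-1}t'$ is central in $\tilde B'=B'\oplus\langle t_1'\rangle$. Since $B'$ is a non-abelian free product, it has trivial centre, so the centre of $\tilde B'$ is $\langle t_1'\rangle$. Therefore $\psi(c_0)^{-1}t'=t_1'^{\,n}=(c_0'^{-1}t')^n$. Comparing $t'$-exponents in $G\rtimes\langle t'\rangle$ gives $n=1$, and hence $\psi(c_0)=c_0'$.
\end{enumerate}

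The main obstacle is step 2: verifying honestly that the pulled-back tree $S$ satisfies hypothesis (1) of the canonicity lemma, in particular the direct-product requirement, and that the resulting isomorphism lets us normalize $f$ without losing $f(t)=t'$. If the hypothesis check on vertex stabilizers is delicate, I would instead argue directly: $\psi(B)$ is non-abelian and centralized by $f(t_1)$, which forces $\tilde B$ to be elliptic in $T'$ by the centralizer argument of Lemma \ref{lem: tilde A elliptic on S separating}, and Lemma \ref{lem:normalizer of A tilde} then pins down the fixed vertex.
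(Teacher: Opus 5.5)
Your proof is correct and follows the paper's skeleton. You pass to an isomorphism $f$ of mapping tori with $f|_G=\psi$ and $f(t)=t'$, feed the pulled-back tree $T'$ into Lemma \ref{lem: Canonicity lemma for separating Dehn twists}, conjugate to align the splittings, and restrict to $A$ and $B$.

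The main difference is how you get $f(\tilde A)=\tilde A'$. You obtain $\psi(A)=A'$ in one line from $A=\mathrm{Fix}(\delta)$ and $A'=\mathrm{Fix}(\delta')$ (Theorem \ref{lemma: enlarging the splitting in separating case}). The paper instead derives $f(\tilde A)=(\tilde A')^{\gamma}$ from ellipticity, the inclusions $f(\tilde A)\subseteq(\tilde A')^{\gamma}$ and $f^{-1}(\tilde A')\subseteq\tilde A^{\lambda}$, and Lemma \ref{lem:normalizer of A tilde}. Your shortcut also settles that $\tilde A$ lands in the $\tilde A'$-orbit of vertices rather than the $\tilde B'$-orbit. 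The paper passes over this point when it names the fixed vertex $\gamma\tilde A'$.

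You also choose the conjugator in $\tilde A'$, where conjugation fixes $t'$ and restricts on $G$ to maps commuting with $\delta'$. This keeps $f(t)=t'$ for free. The paper conjugates by $\gamma\in G$ and then has to recover $\phi(t)=t'$ from $Z(\tilde A')=\langle t'\rangle$.

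Your other departures are equally valid. For (4) you use $Z(\tilde B')=\langle t_1'\rangle$ and the $t'$-exponent, where the paper uses $C_G(B')=1$. For the converse you check $F\delta F^{-1}=\delta'$ directly, where the paper extends to the mapping tori.

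The paper's route uses only the canonical tree and treats $\tilde A$ and $\tilde B$ by one mechanism. Yours is shorter for $\tilde A$ because it exploits $A=\mathrm{Fix}(\delta)$, though the tree is still what places $\tilde B$.

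Two small points are left implicit. First, $T'$ is minimal. Second, $\tilde A'$ fixes only the vertex $\tilde A'$, since it is non-abelian and edge stabilizers are abelian. This second point is what guarantees that the isomorphism $T\to S$ sends the vertex $\tilde A$ to the vertex $\tilde A'$.
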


\begin{proof}
    As in \ref{subsection: Step 1}, denote $\tilde{A} = A \oplus \langle t \rangle$, $\tilde{B} = B \oplus \langle t_1 \rangle$, $\tilde{C} = C \oplus \langle t \rangle = C \oplus \langle t_1 \rangle$, etc. 
    
    Suppose first that $\delta$ and $\delta'$ are conjugate in $\Aut(G)$. Then by Lemma \ref{lem:mapping_tori_iso_implies_conjuguate}, since $\delta, \delta'$ are conjugate in $\Aut(G)$, there exists an isomorphism $f: \Gamma_{\delta} \to \Gamma_{\delta'}$ inducing an automorphism of $G$ and mapping $t$ to $t'$. Let $T$ and $T'$ be the Bass-Serre trees corresponding to the given splittings of $\Gamma_{\delta}$ and $\Gamma_{\delta'}$.

     The isomorphism $f$ induces an action $\cdot_f$ of $\Gamma_{\delta}$ on $T'$ via, for all $g \in \Gamma_{\delta}$ and all $x \in T'$, $g \cdot_f x = f(g) \cdot x$, where the $\cdot$ on the right hand side is the action of $\Gamma_{\delta'}$ on $T'$. Since $T'$ is 2-acylindrical with respect to the $\Gamma_{\delta'}$-action, it is 2-acylindrical with respect to the $\Gamma_{\delta}$-action. The vertex stabilizers of $T'$ with respect to the $\Gamma_{\delta}$ action are isomorphic to $f^{-1}(A' \oplus \Z) \cong f^{-1}(A') \oplus \Z$ or $f^{-1}(B' \oplus \Z) \cong f^{-1}(B') \oplus \Z$. Since $f$ is an isomorphism preserving $G$, we have that $f^{-1}(A') < G$ and $f^{-1}(B') < G$ and both $f^{-1}(A')$ and $f^{-1}(B')$ are non-abelian since $A',B'$ are non-abelian. Also, the edge stabilizers of $T'$ with respect to the $\Gamma_{\delta}$ action are of isomorphic to $f^{-1}(C' \oplus \Z)$, which is abelian. Therefore, $T'$ satisfies the conditions of Lemma \ref{lem: Canonicity lemma for separating Dehn twists} as a $\Gamma_{\delta}$-tree and hence $T$ and $T'$ are isomorphic as $\Gamma_{\delta}$-trees. 
     
     By Lemma \ref{lem: tilde A elliptic on S separating}, we have that $\tilde{A}$ fixes a unique vertex $\gamma \tilde{A}'$ of $T'$ for some $\gamma \in \Gamma'$. We can assume that $\gamma \in G$ since we can absorb the $t'$-component of $\gamma$ into $\tilde{A}'$. We then have $f(\tilde{A}) \subseteq \mathrm{Stab}_{\Gamma'}(\gamma \tilde{A}') = (\tilde{A}')^{\gamma}$. Similarly, we have $f^{-1}(\tilde{A}') \subseteq \tilde{A}^{\lambda}$ for some $\lambda \in G$.
     
     We will show that $f(\tilde{A}) = (\tilde{A}')^{\gamma}$. From the inclusions $f(\tilde{A}) \subseteq (\tilde{A}')^{\gamma}$ and $f^{-1}(\tilde{A}') \subseteq \tilde{A}^{\lambda}$, we have $f(\tilde{A}) \subseteq (\tilde{A}')^{\gamma} \subseteq f(\tilde{A}^{\lambda})^{\gamma} = f(\tilde{A}^{f^{-1}(\gamma)\lambda})$. Hence, we obtain $\tilde{A} \subseteq \tilde{A}^{f^{-1}(\gamma) \lambda }$, which implies by Lemma \ref{lem:normalizer of A tilde} that $f^{-1}(\gamma) \lambda \in \tilde{A}$ and hence that $\tilde{A}^{f^{-1}(\gamma) \lambda } = \tilde{A}$. Therefore, we have $f(\tilde{A}) \subseteq (\tilde{A}')^{\gamma} \subseteq f(\tilde{A}^{f^{-1}(\gamma)\lambda}) = f(\tilde{A})$. Thus, $f(\tilde{A}) = (\tilde{A}')^{\gamma}$.

     We similarly obtain that $f(\tilde{B)} = (\tilde{B}')^{\eta}$ for some $\eta \in G$. By Lemma \ref{lem: tilde A elliptic on S separating}, we obtain that $[\gamma \tilde{A}', \eta \tilde{B}']$ is an edge in $T'$ (because the isomorphism $T \to T'$ maps $\tilde{A}$ to $\gamma \tilde{A}$ and $\tilde{B}$ to $\eta \tilde{B}$, and $[\tilde{A}, \tilde{B}]$ is an edge in $T$, hence $[\gamma \tilde{A}', \eta \tilde{B}']$ is an edge in $T'$). Thus, we have $\eta \tilde{B}' = \alpha \gamma \tilde{B}'$ for some $\alpha \in \mathrm{Stab}_{\Gamma'}(\gamma \tilde{A}') = (\tilde{A}')^{\gamma}$, so we can assume $\eta = \alpha\gamma$, and in fact that $\eta = \gamma$ by replacing $\gamma$ with $\alpha^{-1} \gamma$, since $(\tilde{A'})^{\alpha \gamma} = (\tilde{A}')^{\gamma}$. We then have $f(\tilde{C}) = f(\tilde{A} \cap \tilde{B}) = f(\tilde{A}) \cap f(\tilde{B}) = (\tilde{A}')^{\gamma} \cap (\tilde{B}')^{\gamma} = (\tilde{C}')^{\gamma}$.
     
     Let $\phi = \mathrm{Ad}(\gamma^{-1})f$. Since $f \vert_{\tilde{A}}$ is an isomorphism from $\tilde{A}$ to $(\tilde{A}')^{\gamma}$, we have that $\phi \vert_{\tilde{A}}$ is an isomorphism from $\tilde{A}$ to $\tilde{A}'$. Similarly, $\phi \vert_{\tilde{B}}$ is an isomorphism from $\tilde{B}$ to $\tilde{B}'$ and $\phi \vert_{\tilde{A}}(\tilde{C}) = \tilde{C}' = \phi \vert_{\tilde{B}}(\tilde{C})$. Furthermore, $\phi \vert_{\tilde{A}}, \phi \vert_{\tilde{B}}$ agree on $\tilde{C}$ since they are restrictions of the same map $\phi$. Next, since $\phi \vert_{\tilde{A}}$ is an isomorphism from $\tilde{A}$ to $\tilde{A}'$ and since $\langle t \rangle = Z(\tilde{A})$ (respectively, $\langle t' \rangle = Z(\tilde{A}')$), we can assume up to changing the sign of $\phi$ on $\langle t \rangle$ that $\phi(t) = t'$. Therefore, $\phi$ is an isomorphism $\Gamma \to \Gamma'$ satisfying $\phi(G) = G$ and $\phi(t) = t'$, which yields that $\delta' = \delta^{\phi}$. We then have $\phi(A) \subseteq \tilde{A}' \cap G = A'$ and similarly $\phi^{-1}(A') \subseteq \tilde{A} \cap G = A$, so $\phi(A) = A'$. Similarly, $\phi(B) = B'$ and $\phi(C)=C'$.
     
     We now claim that $\phi(c_0) = c_0'$. Indeed, since $\delta' = \delta^{\phi}$, we have that $\delta'$ acts on $\phi(B) = B'$ as conjugation by $\phi(c_0)$. Also, by definition, $\delta'$ acts on $B'$ as conjugation by $c_0'$. Thus, $(c_0')^{-1}\phi(c_0) \in C_G(B') = 1$ since $B'$ is a non-abelian subgroup of $G$. Thus, we obtain $\phi(c_0) = c_0'$.

     Now let $\phi_A = \phi \vert_A, \phi_B = \phi \vert_B$. We then have that $\phi_A : A \to A'$ and $\phi_B : B \to B'$ are isomorphisms that map $C$ to $C'$, $c_0$ to $c_0'$ and agree on $C$, and hence are the required isomorphisms.

     \vspace{0.2in}
     
    Conversely, suppose there exist such an isomorphisms $\phi_A,\phi_B$. Since $\phi_A, \phi_B$ agree on $C$ and map $C$ to $C'$, the free product isomorphism $\phi_A * \phi_B$ descends to an isomorphism $\Phi : G \to G$ preserving the splittings $G = A *_C B = A' *_{C'} B'$. Since $\phi_A(c_0) = \phi_B(c_0) = c_0'$, mapping $t$ to $t'$ extends $\Phi$ to an isomorphism $\tilde{\Phi} : \Gamma_{\delta} \to \Gamma_{\delta'}$. By Lemma \ref{lem:mapping_tori_iso_implies_conjuguate}, we have that $\delta$ and $\delta'$ are conjugate in $\Aut(G)$.

\end{proof}

\begin{rmk}
    \label{remark: simplified conjugacy conditions for separating}
    In Lemma \ref{lem:upgrade_to_fiber_preserving}, the four conditions on $\phi_A$ are equivalent to the last two, since by Theorem \ref{lemma: enlarging the splitting in separating case}, $C$ is the maximal abelian subgroup of $G$ containing $c_0$ and hence is the centralizer of $c_0$ in $G$ (and thus in $A$ and $B$). Thus, conditions (1) and (2) follow from (3) and (4) since $\phi_A, \phi_B$ are homomorphisms.
\end{rmk}

\subsection{Step III: An algorithm to decide if $\delta$ and $\delta'$ are conjugate.}\label{section: algorithm for separating Dehn twists}

\begin{lemma}
\label{lem:Tietze_algo}
    Given any separating Dehn twist $\delta \in \Aut(G)$ and a starting presentation $G = \langle S \vert R \rangle$ of $G$, there is an algorithm to compute the splitting $$\Gamma_{\delta} = (A \oplus \langle t \rangle) *_{C \oplus \langle t \rangle = C \oplus \langle t_1 \rangle} (B \oplus \langle t_1 \rangle)$$
\end{lemma}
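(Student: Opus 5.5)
The plan is to compute the maximal splitting $G = A *_C B$ of Theorem \ref{lemma: enlarging the splitting in separating case} directly from the data of $\delta$. The splitting of $\Gamma_\delta$ then follows formally from the presentation computation at the start of Section \ref{subsection: Step 1}.

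We interpret ``given $\delta$'' as being given $\delta$ by the images $\delta(s)$ of the generators $s\in S$, written as words in $S$. We are \emph{not} assumed to know the splitting with respect to which $\delta$ is a Dehn twist. We only know that some such splitting $A_0 *_{C_0} B_0$ and element $c_0$ exist.

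\textbf{Step 1: find $c_0$, $A$, $B$ and $C$ by enumeration.} We enumerate, in parallel over all candidates, the following data:
\begin{itemize}
\item an element $c_0\in G$;
\item finite tuples of words $a_1,\dots,a_p$ and $b_1,\dots,b_q$;
\item a finite tuple of words generating a candidate $C$.
\end{itemize}
For each candidate we check certificates that can be verified effectively in $G$:
\begin{itemize}
\item $\delta(a_i)=a_i$ for all $i$;
\item $\delta(b_j)=c_0 b_j c_0^{-1}$ for all $j$;
\item the $a_i,b_j$ together generate $G$, certified by writing each $s\in S$ as a word in them;
\item the candidate $C$ lies in $\langle a_i\rangle\cap\langle b_j\rangle$, certified by words;
\item $C$ is abelian and contains $c_0$.
\end{itemize}
The word problem is solvable in $G$ because free products of free abelian groups have solvable word problem via normal forms. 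Lemma \ref{lemma: A and B are finitely generated in separating case} guarantees that the true $A$ and $B$ are finitely generated, and the true $C$ is finitely generated. Hence a correct tuple exists and the search terminates.

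The difficulty is certifying that the tuple we find really gives an amalgam decomposition $G=A*_CB$ with $C=A\cap B$. For this, I would exploit the structure rather than verify the amalgam directly.

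First, $C$ is the centralizer of $c_0$ in $G$ (Remark \ref{remark: simplified conjugacy conditions for separating}). This centralizer is computable in a free product of free abelian groups: write $c_0$ in cyclically reduced normal form. If it is conjugate into a factor $G_i$, the centralizer is the corresponding conjugate of $G_i$. Otherwise it is the maximal cyclic subgroup containing $c_0$, found by extracting roots.

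Second, $A=\mathrm{Fix}(\delta)$ by Theorem \ref{lemma: enlarging the splitting in separating case}. Fixed subgroups of automorphisms of such free products can be computed via Kurosh-type and Stallings-folding methods on the graph-of-groups. Alternatively, for the Dehn twist the fixed subgroup can be recognized through the Bass--Serre tree once $c_0$ is known.

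Third, we take $B$ to be generated by the found $b_j$ together with $C$. The amalgam relations can then be verified using Kurosh's theorem (Theorem \ref{thm:Kurosh}) applied to finitely generated subgroups, which yields algorithmic presentations of $A$, $B$ and $C$.

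\textbf{Step 2: write down the splitting of $\Gamma_\delta$.} Given presentations $\langle S_A\mid R_A\rangle$ and $\langle S_B\mid R_B\rangle$ and generators of $C$ in both, we apply Tietze transformations to the presentation $\langle S,t\mid R,\ tst^{-1}=\delta(s)\rangle$ of $\Gamma_\delta$. Exactly as in the displayed computation of Section \ref{subsection: Step 1}, this yields the presentation
\[
(A\oplus\langle t\rangle) *_{C\oplus\langle t\rangle = C\oplus\langle t_1\rangle} (B\oplus\langle t_1\rangle), \qquad t_1=c_0^{-1}t .
\]

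\textbf{Main obstacle.} The hard step is the certification in Step 1: proving that the enumerated candidate subgroups are exactly $\mathrm{Fix}(\delta)$ and the complementary vertex group, with intersection $C$, rather than just some subgroups compatible with $\delta$. I would first try to reduce everything to computing $\mathrm{Fix}(\delta)$ and the centralizer $C$ by the normal-form and Kurosh methods above. The amalgam structure would then be verified via the normal form criterion of Lemma \ref{lemma: normal form theorem for amalgamated free products}, which can be checked on a finite folded graph-of-groups representation.
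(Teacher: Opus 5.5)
\textbf{Gap: the certification step.} Your enumeration only certifies necessary conditions. A tuple passing your checks gives $\langle a_i\rangle\le\mathrm{Fix}(\delta)$ and $\langle b_j\rangle\le\{g:\delta(g)=c_0gc_0^{-1}\}$, and you rightly observe that this alone does not show the tuple yields $G=A*_CB$ with $A=\mathrm{Fix}(\delta)$. The tools you propose to close this are not available as stated:
\begin{itemize}
\item Kurosh's theorem and Stallings folding do not, by themselves, compute $\mathrm{Fix}(\delta)$ for an automorphism of a free product of free abelian groups. Already for free groups, computing a basis of a fixed subgroup is a deep theorem.
\item Your alternative, reading $\mathrm{Fix}(\delta)$ off the Bass--Serre tree, is circular, since that tree is exactly the splitting being computed.
\item The normal form criterion of Lemma \ref{lemma: normal form theorem for amalgamated free products} quantifies over all reduced words. ``Checking it on a finite folded graph-of-groups'' is not an algorithm until you say what is checked and why it suffices.
\end{itemize}
Likewise, Kurosh's theorem is existential, so producing the presentations $\langle S_A\mid R_A\rangle$, $\langle S_B\mid R_B\rangle$ for your Step 2 needs an algorithmic version or a further search. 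As written, nothing guarantees that your procedure outputs the splitting in the statement.

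\textbf{The missing idea is that no certification is needed.} Because $\delta$ is known to be a Dehn twist, every tuple passing your positive checks is automatically correct.

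Let $G=A*_CB$ be the maximal splitting of Theorem \ref{lemma: enlarging the splitting in separating case} with twisting element $c_0$, so $A=\mathrm{Fix}(\delta)$ and $C=C_G(c_0)$. Suppose your search returns $c$, $a_i$, $b_j$; here $c\neq 1$ automatically if $\delta\neq\mathrm{id}$. Then $c\in\langle a_i\rangle\le A$, and $\langle b_j\rangle\le F:=C_{\Gamma_\delta}(c^{-1}t)\cap G$, with $c^{-1}t\in\tilde A$.

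By the computation in the proof of Lemma \ref{lemma: acylindricity of separating mapping torus}, distinct edges at $\tilde A$ (resp.\ $\tilde B$) have stabilizers meeting in $\langle t\rangle$ (resp.\ $\langle t_1\rangle$). Hence the fixed subtree of $c^{-1}t$ is one of three things:
\begin{itemize}
\item the vertex $\tilde A$;
\item a single edge at $\tilde A$;
\item the star of the vertex $a\tilde B$, which happens precisely when $c=ac_0a^{-1}$ for some $a\in A$.
\end{itemize}
The centralizer of $c^{-1}t$ preserves this subtree. In the first two cases this gives $F\le A$, forcing $G=\langle a_i,b_j\rangle\le A$, a contradiction. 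In the third case it gives $F\le aBa^{-1}$.

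Now $G=A*_{C'}aBa^{-1}$ with $C'=aCa^{-1}=C_G(c)$. Set $A'=\langle a_i,C'\rangle\le A$ and $B'=\langle b_j,C'\rangle\le aBa^{-1}$. Any reduced alternating word in $A'\setminus C'$ and $B'\setminus C'$ is also reduced in $A*_{C'}aBa^{-1}$, hence nontrivial. So $G=A'*_{C'}B'$, and uniqueness of normal forms gives $A'=A=\mathrm{Fix}(\delta)$.

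Thus, once you replace your candidate $C$ by the computable centralizer $C_G(c)$, the first tuple found is already the maximal splitting, and you never need to compute $\mathrm{Fix}(\delta)$.

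For comparison, the paper's proof is a blind enumeration of Tietze transformations of $\langle S,t\mid R,\ tst^{-1}=\delta(s)\rangle$ until a presentation of the amalgam form appears. Its termination is guaranteed only by the existence of the splitting. With the observation above, your search becomes a complete version of the same existence-driven idea, in which the recognized object (generators satisfying checkable identities) is explicit.
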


\begin{proof}

As shown in the beginning of Section \ref{subsection: Step 1}, given that $\delta$ is a Dehn twist, there exists a splitting of its mapping torus as an amalgamated free product over a free abelian subgroup: 

$$\Gamma_{\delta} = (A \oplus \langle t \rangle) *_{C \oplus \langle t \rangle = C \oplus \langle t_1 \rangle} (B \oplus \langle t_1 \rangle)$$

Thus, starting from the standard presentation $\Gamma_{\delta} = \langle S, t \vert R, tst^{-1} = \delta(s), \text{ for each } s \in S \rangle$, there is a finite sequence of Tietze transformations that will take this given presentation to the presentation associated with the above splitting of $\Gamma_{\delta}$ as an amalgamated free product. 

Thus, we obtain an algorithm to compute the splitting as follows. We enumerate all presentations of $\Gamma_{\delta}$ obtained from the standard presentation by applying finitely many Tietze transformation, checking at each step whether the current presentation is equal to the presentation associated with the above splitting of $\Gamma_{\delta}$. Since $\Gamma_{\delta}$ has a presentation associated with the above splitting (since we know that $\delta$ is a Dehn twist), the algorithm will terminate at this presentation in finite time. 

\end{proof}

\subsubsection{The algorithm}

We will need the following result about the existence of an automorphism carrying a list of elements to another in finitely generated free products of free abelian groups. The following result holds for more general toral relatively hyperbolic groups. 

\begin{lemma} [\cite{Whitehead_for_toral_relatively_hyperbolic_groups}]
    \label{lemma: simultaneous whitehead algorithm}
    Let $G$ be a free product of finitely many finitely generated free abelian groups. There is an algorithm that decides, given two finite lists $(u_1,\ldots,u_n)$ and $(v_1,\ldots,v_n)$ of elements of $G$, whether there exists an automorphism $\phi \in \Aut(G)$ such that $\phi(u_i) = v_i$ for all $i = 1,\ldots,n$. 
\end{lemma}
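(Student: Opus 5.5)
The plan is to treat this as a special case of the result cited from \cite{Whitehead_for_toral_relatively_hyperbolic_groups}, once we check that $G$ satisfies its hypotheses, and to outline why that result holds for our $G$. First, $G = G_1 * \cdots * G_k$ with $G_i \cong \Z^{n_i}$ is torsion-free and hyperbolic relative to the collection of its non-cyclic free factors $G_i$. These peripheral subgroups are finitely generated free abelian, so $G$ is toral relatively hyperbolic. Everything needed as input (a finite presentation, and solutions to the word and conjugacy problems in $G$) is computable from the given presentation using normal forms in free products.

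The decision procedure runs two searches in parallel. \emph{Semi-deciding ``yes''}: $\Aut(G)$ is finitely generated by an explicit, computable list of Fouxe-Rabinovitch generators (factor automorphisms, permutations of isomorphic factors, partial conjugations and transvections). We enumerate words in these generators and, using the solvable word problem, test whether $\phi(u_i) = v_i$ for all $i$. If an automorphism exists, this search finds it. \emph{Semi-deciding ``no''}: this is the substantive half. We rephrase existence of $\phi$ as a problem about solutions of equations over $G$. A homomorphism $\phi : G \to G$ with $\phi(u_i)=v_i$ is a solution of the finite system consisting of the relators of $G$, with coefficients and constants $v_i$, plus the equations $\phi(u_i)=v_i$. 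Following Groves' Makanin--Razborov theory for toral relatively hyperbolic groups, the set of all such solutions is described by a finite, computable diagram of limit groups together with modular automorphism groups. From that diagram one then decides whether some solution is bijective. The cited paper does this via the JSJ decomposition relative to the tuple, which reduces the orbit question to finitely many checks in abelian vertex groups (linear algebra over $\Z$, i.e.\ orbit problems for $GL_n(\Z)$ acting on tuples of vectors) and in rigid vertex groups, where the relevant automorphism groups are finite modulo conjugation.

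An alternative for the ``no'' side is to reduce to the isomorphism problem with marked peripheral structure. Form the toral relatively hyperbolic group $G$ together with the marked tuple (for instance by amalgamating a free group along $\langle u_i\rangle$ resp.\ $\langle v_i\rangle$, after passing to roots). Then apply Dahmani--Groves/Dahmani--Guirardel, which decides isomorphism preserving the markings.

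I expect the main obstacle to be the ``no'' direction, and specifically the step controlling automorphisms twisting along abelian edge groups and acting on the abelian factors. The orbit of the tuple there is infinite, so the question must be reduced to effective linear-algebraic orbit problems for arithmetic groups. This is exactly where toral relative hyperbolicity, rather than mere hyperbolicity, is used, and for this step I would rely on \cite{Whitehead_for_toral_relatively_hyperbolic_groups} rather than reprove it.
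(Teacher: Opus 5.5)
Your proposal matches the paper, which gives no argument of its own and simply invokes \cite{Whitehead_for_toral_relatively_hyperbolic_groups}, remarking that the result holds for toral relatively hyperbolic groups; your check that $G$ is torsion-free and hyperbolic relative to its non-cyclic free factors is exactly the hypothesis verification this requires. Your extra sketches of the ``no'' direction are not needed and would not stand on their own: one is deciding bijectivity from a Makanin--Razborov diagram, and the other is the amalgamation-with-markings reduction, where an isomorphism of the amalgams need not restrict to an automorphism of $G$ sending each $u_i$ exactly to $v_i$. Since you ultimately defer to the citation, this does not affect correctness.
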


Given two separating Dehn twists $\delta, \delta'$ we go through the following steps in order to determine if they are conjuguate.

\textbf{Step 1:} Compute the splittings of $\Gamma_\delta$ and $\Gamma_{\delta'}$ using Lemma \ref{lem:Tietze_algo}. 

We now have two splittings of the form $\Gamma_\delta= (A\oplus \langle t \rangle)*_{C \oplus \langle t \rangle} (B\oplus \langle tc_0^{-1} \rangle$ and $\Gamma_{\delta'}= (A'\oplus \langle t' \rangle)*_{C' \oplus \langle t' \rangle} (B'\oplus \langle t'c_0'^{-1} \rangle)$. 

\textbf{Step 2:} We decide if $A \cong A'$, $B \cong B'$ and  $C \cong C'$.

To decide if $A \cong A'$ and $B \cong B'$, we use Kurosh's subgroup theorem (Theorem \ref{thm:Kurosh}) as follows.

As $A,B,A',B' \leq A_1 * \cdots *A_k$, there are subsets $X_{A}, X_B, X_{A'}, X_{B'} \subset G$, index sets $J_A, J_{A'}, J_B, J_{B'}$, subgroups $A_j, A'_j, B_j, B'_j$ of the free factors $G_i$ of $ G$ such that
\begin{align*} 
A=F(X_A) * (*_{j \in J_A} A_j^{\dag}),\quad & B=F(X_B) * (*_{j \in J_B} B_j^{\dag}) \\ 
   A'=F(X_{A'}) * (*_{j \in J_{A'}} A_j'^{\dag}),\quad & B'=F(X_{B'}) * (*_{j \in J_{B'}} B_j'^{\dag}).
\end{align*}

Note that the index sets $J_A,J_{A'}, J_B, J_{B'}$ and the sets $X_A, X_{A'}, X_B, X_{B'}$ are all finite by Lemma \ref{lemma: A and B are finitely generated in separating case}. Also, by the discussion at the beginning of Section \ref{subsection: Step 1}, we can assume that $C$ is maximal abelian both $A$ and $B$ and that $C'$ is maximal abelian in both $A'$ and $B'$. With these decompositions, we can now check whether $A \cong A'$, $B \cong B'$ and $C \cong C'$ as follows. For $C$ and $C'$, we simply check if their ranks are the same. If the ranks are different, output NO and stop. To check if $A \cong A'$, we check if $X_A$ and $X_{A'}$ have the same cardinality and if sets of subgroups $\{A_j\}_{j \in J_A}$ and $\{A_j'\}_{j \in J_{A'}}$ are the same up to isomorphism, that is, if for each $j \in J_A$, there exists $j' \in J_{A'}$ such that $rank (A_j) = rank (A'_{j'})$ and if for each $j' \in J_{A'}$ there exists $j \in J_A$ such that $rank (A_j) = rank (A'_{j'})$. We do the same to determine if $B \cong B'$. If $X_A$ and $X_{A'}$ have different cardinalities and if the sets of subgroups $\{A_j\}_{j \in J_A}$ and $\{A_j'\}_{j \in J_{A'}}$ are different up to isomorphism, output NO and stop. Do the same for the $B$ and $B'$ subgroups. If $A \cong A'$, $B \cong B'$ and $C \cong C'$, proceed to Step 3. 

\textbf{Step 3:} Identifying $A$ with $A'$, $B$ with $B'$ and $C$ with $C'$, using Lemma \ref{lemma: simultaneous whitehead algorithm}, decide if there exists an automorphism $\psi \in \Aut(C)$ such that $\psi(c_0) = c_0'$ and automorphisms $\psi_A \in \Aut(A)$ and $\psi_B \in \Aut(B)$ such that $\psi_A(c_0) = c_0'$ and $\psi_B(c_0) = c_0'$. If not, output NO and stop, otherwise output YES and stop. 

We now justify why the algorithm gives the correct output. First, when the algorithm outputs NO in Step 2, then one of the pairs of subgroups $(A,A')$, $(B,B')$ or $(C,C')$ are not isomorphic. By Lemma \ref{lem:upgrade_to_fiber_preserving}, it follows that $\delta$ and $\delta'$ are not conjugate. When the algorithm outputs NO in Step 3, the there cannot exist isomorphisms $\phi_A, \phi_B$ as in Lemma \ref{lem:upgrade_to_fiber_preserving}, since otherwise they would restrict to an isomorphism $C \to C'$ mapping $c_0 \mapsto c_0'$. Therefore, by Lemma \ref{lem:upgrade_to_fiber_preserving} $\delta$ and $\delta'$ are not conjugate in $\Aut(G)$. When the algorithm outputs YES in Step 3, this means that $A \cong A'$, $B \cong B'$ and there is an isomorphism $\phi_C:C \to C'$ mapping $c_0$ to $c_0'$. As we assumed at the beginning of Section \ref{subsection: Step 1}, $C$ and $C'$ are maximal abelian both $A \cong A'$ and $B \cong B'$ (where we now identify $A$ with $A'$ and $B$ with $B'$ since they are isomorphic). If $C$ and $C'$ are both cyclic, then the existence of the automorphisms $\psi_A, \psi_B$ implies that $\delta$ and $\delta'$ are conjugate by Lemma \ref{lem:upgrade_to_fiber_preserving}. Otherwise, if $C$ and $C'$ are not cyclic, then they are free factors of $A,B$ (respectively, $A',B'$) and hence writing $A = C * D = C' * D'$ for free factors $D,D'$ of $A$ (which must be isomorphic since $C$ and $C'$ are isomorphic), we obtain an automorphism $\phi_A$ of $A$ taking $C$ to $C'$ by taking the free product of $\phi_C$ with any isomorphism $\phi_D:D \to D'$. Similarly, decomposing $B = C * E = C' * E'$ and defining $\phi_B = \phi_C * \psi_E$ for any isomorphism $\psi_E : E \to E'$, we get that $\phi_A$ and $\phi_B$ both map $C$ to $C'$, agree on $C$, and take $c_0$ to $c_0'$. By Lemma \ref{lem:upgrade_to_fiber_preserving}, we have that $\delta$ and $\delta'$ are conjugate in $\Aut(G)$.

\section{Non-separating Dehn twists}\label{section: non-separating Dehn twists}

In this section, we fix $G$ to be a finitely generated free product of free abelian groups.

Let $G=A*_{D^{e}=C}$ be a splitting of $G$ as an HNN extension over an abelian subgroup. A \textit{non-separating Dehn-twist} $\delta$ for $G$ corresponding to this splitting is given by

$$ \delta : \begin{cases}
    a \mapsto a & \forall a \in A \\
    e \mapsto ew_{1} 
\end{cases} $$
where $w_{1}\in D$ and $ew_{1}e^{-1}=:w_{2}\in C$.

\subsection{Step I: Finding a canonical splitting of the mapping torus}
First, as in the case of separating Dehn twists, we find an associated splitting of the mapping torus $\Gamma_{\delta} = G \rtimes_{\delta} \langle t \rangle$ of $G$ with respect to $\delta$. 
We have: 
\begin{gather*}
    \Gamma_{\delta}
    = G\rtimes_{\delta} \langle t \rangle  \\
    = \langle A,e,t \vert  D^{e}=C, \, [t,a]=1 \space \forall a\in A,\, tet^{-1}=ew_{1} \rangle \\
    = \langle A , e, t \vert  D^{e}=C,\, [t,a]=1 \space\forall a\in A, \, ete^{-1}=tw_{2}^{-1} \rangle \\
    = (A\oplus \langle t\rangle) *_{(D\oplus \langle t \rangle)^{e} = (C\oplus \langle tw_{2}^{-1} \rangle)}
\end{gather*}

Denoting $B$ the maximal abelian subgroup of $G$ containing $D$, we note that $\delta$ acts on $B$ since $\delta$ is an automorphism of $G$ that fixes $D$ pointwise. 

We denote $\tilde{A} = A \oplus \langle t \rangle, \tilde{D} = D \oplus \langle t \rangle, \tilde{C} = C \oplus \langle tw_2^{-1} \rangle = C \oplus \langle t \rangle$ and $\tilde{B} = B \rtimes \langle t \rangle$. The following lemma is a consequence of \cite[Theorem 1.3]{Finiteness_properties_graphs_of_groups}.

\begin{lemma}
    \label{lemma: A is finitely generated}
    Let $G$ be a free product of free abelian groups. Assume that $G$ is finitely generated. Suppose that $G$ splits as an HNN extension $G = A*_{D^e = C}$ with $D$ and $C$ abelian. Then $A$ is finitely generated.
\end{lemma}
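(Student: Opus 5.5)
We will deduce the statement from \cite[Theorem 1.3]{Finiteness_properties_graphs_of_groups}, exactly as for Lemma \ref{lemma: A and B are finitely generated in separating case}. The point is that the fundamental group of a finite graph of groups is finitely generated, and all of its edge groups are finitely generated, then every vertex group is finitely generated. The HNN extension $G = A*_{D^e=C}$ is the fundamental group of a graph of groups with one vertex (group $A$) and one loop (group $D \cong C$). Since $G$ is finitely generated by hypothesis, it only remains to check that the edge group $D$ is finitely generated.

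To show that $D$ is finitely generated, we apply Kurosh's subgroup theorem (Theorem \ref{thm:Kurosh}) to the abelian subgroup $D \le G = G_1 * \cdots * G_k$. In the decomposition $D = F(X) * (*_{j} D_j^{\dag})$, abelianness forces exactly one of the following:
\begin{itemize}
\item $D$ is trivial;
\item $D \cong \Z$ is infinite cyclic;
\item $D$ is a conjugate of a subgroup of some factor $G_i \cong \Z^{n_i}$.
\end{itemize}
A free product of two nontrivial groups is non-abelian, and $F(X)$ is abelian only if $|X| \le 1$, which is why no other configuration can occur. In the last case $D$ is a subgroup of a finitely generated free abelian group, hence finitely generated free abelian of rank at most $n_i$. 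In every case $D$, and therefore $C = eDe^{-1}$, is finitely generated.

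Finally, we invoke the cited theorem for the one-vertex, one-edge graph of groups, which gives that $A$ is finitely generated. If one prefers to avoid the general theorem, there is a direct argument. Choose a finite generating set of $G$ and write each generator as a word in $A \cup \{e^{\pm1}\}$. Let $A_0 \le A$ be generated by the finitely many $A$-letters appearing in these words together with finite generating sets of $C$ and $D$. Then $G = A_0 *_{D^e=C}$ as well, by the normal form theorem for HNN extensions (Britton's lemma), which identifies $A$ with $A_0$.

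The only step that is not bookkeeping is this last comparison of $A_0$ with $A$ via Britton's lemma, which is precisely the content of \cite[Theorem 1.3]{Finiteness_properties_graphs_of_groups}. Citing that theorem disposes of it, so the real work reduces to verifying finite generation of the edge groups via Kurosh.
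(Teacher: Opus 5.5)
Your proposal is correct and follows the paper's route: the paper deduces the lemma directly from \cite[Theorem 1.3]{Finiteness_properties_graphs_of_groups}, using (as it states explicitly in the separating case) that abelian subgroups of $G$ are cyclic or conjugate into a free abelian factor and hence finitely generated, which is exactly the Kurosh check you carry out. Your optional Britton's lemma argument is also valid and simply unpacks the cited theorem in this one-edge case: take a finitely generated $A_0 \le A$ containing $C$, $D$ and the $A$-letters of a finite generating set of $G$, then show that $A = A_0$.
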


When $\delta$ is a separating Dehn twist, the action of mapping torus $\Gamma_{\delta}$ on the Bass--Serre tree of the its associated splitting is 2-acylindrical. However, as we illustrate in the next section, this fails to be the case when $\delta$ is non-separating, forcing us to pass to a new construction called the \textit{tree of cylinders} of the splitting.

\subsection{Failure of acylindricity for the splitting of $\Gamma$ in the non-separating case}

Unlike in the separating Dehn twist case, the Bass-Serre tree corresponding to the above splitting of $\Gamma_{\delta}$ might not be $2$-acylindrical. Here is an example.

Let $G = \langle a,b \rangle * \langle e \rangle \cong \Z^2 * \Z$. We can express this an HNN extension over a cyclic edge group as follows: 

$$G = (\langle a,b \rangle * \langle d \rangle)*_{\langle d \rangle^e = \langle b \rangle}$$

Denote $A = \langle a,b \rangle * \langle d \rangle$, $C = \langle b \rangle$, $D = \langle d \rangle$ in our above notation. 

Consider the Dehn twist $\delta$ defined by $\delta(e) = ed$ (so that $w_1 = d$ and $w_2 = ede^{-1} = b$ in our above notation). 

\begin{claim}
    The above splitting of $G$ is 2-acylindrical.
\end{claim}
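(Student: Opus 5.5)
The plan is to verify $2$-acylindricity straight from the definition. Let $T_G$ be the Bass--Serre tree of the HNN splitting $G = A_{*D^e = C}$, with $A = \langle a,b\rangle * \langle d\rangle$, $D = \langle d\rangle$ and $C = \langle b\rangle$. We must show that every geodesic segment of length $3$ in $T_G$ has trivial pointwise stabilizer. Every edge of $T_G$ is a translate $g\epsilon$ of the edge $\epsilon = [A, eA]$, whose stabilizer is $A \cap eAe^{-1} = C$. Seen from the endpoint $A$, this edge is labelled by the coset $C$ of $A$. Seen from the endpoint $eA$, its stabilizer is $e D e^{-1}$, so after translating by $e^{-1}$ it is labelled by a coset of $D$. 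So at each vertex $gA$ the incident edges split into two types. \emph{$C$-type} edges correspond to cosets $g a C$ and have stabilizers $g a C a^{-1} g^{-1}$. \emph{$D$-type} edges correspond to cosets $g a D$ and have stabilizers $g a D a^{-1} g^{-1}$. The key bookkeeping fact is that every edge is $C$-type at exactly one endpoint and $D$-type at the other.

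Next I will compute the pointwise stabilizer of a length-$2$ path through a vertex; by equivariance we may take that vertex to be $A$. The stabilizer is an intersection of two $A$-conjugates of $C$ or $D$ attached to distinct cosets. I will use three facts about $A = \Z^2 * \Z$, each following from the Kurosh theorem (Theorem~\ref{thm:Kurosh}) and the normal form theorem for free products.
\begin{enumerate}
\item $D = \langle d\rangle$ is a free factor isomorphic to $\Z$, hence malnormal in $A$. So $a_1 D a_1^{-1} \cap a_2 D a_2^{-1} = 1$ whenever $a_1 D \neq a_2 D$.
\item Conjugates of $\langle b\rangle \subset \langle a,b\rangle$ and conjugates of $\langle d\rangle$ meet trivially, since nontrivial elements of conjugates of distinct free factors are never equal.
\item $a_1 C a_1^{-1} \cap a_2 C a_2^{-1}$ can be nontrivial for distinct cosets $a_1 C \neq a_2 C$. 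This happens exactly when $a_1^{-1}a_2 \in \langle a,b\rangle$, because $\langle a,b\rangle$ is malnormal in $A$ but $C$ is not.
\end{enumerate}
Consequently, a length-$2$ path through a vertex has nontrivial pointwise stabilizer only if both of its edges are $C$-type at that middle vertex.

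Now take a length-$3$ geodesic $v_0, v_1, v_2, v_3$ with middle edge $[v_1, v_2]$. By the bookkeeping fact, $[v_1, v_2]$ is $D$-type at one of $v_1$ or $v_2$; say at $v_2$. Then the length-$2$ path $[v_1, v_2] \cup [v_2, v_3]$ passes through $v_2$ using a $D$-type edge. Its stabilizer is an intersection of a conjugate of $D$ with either a conjugate of $C$ or a conjugate of $D$ from a different coset, and by facts (1) and (2) this is trivial. Since the pointwise stabilizer of the whole segment is contained in that of this sub-path, the segment has trivial pointwise stabilizer, which proves the claim.

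The main obstacle is the orientation bookkeeping: justifying carefully that the two ends of each edge carry different labels, and that at the $eA$ end the stabilizer really is the $D$-conjugate. This needs the explicit description of the Bass--Serre tree given earlier in the paper. The malnormality facts (1) and (2) are routine consequences of the free product structure of $A$.
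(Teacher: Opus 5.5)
Your proof is correct, and it takes a genuinely different route from the paper's.

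\textbf{The paper's route.} The paper translates so that the middle edge is the fundamental edge $[e^{-1}A, A]$ with stabilizer $D$, and writes the segment as $(gD, D, hD)$. It then splits according to whether $g,h$ are elliptic (rotations about the endpoints) or hyperbolic (moving one endpoint of the middle edge to the other). In the elliptic case it uses, exactly as you do, that $D$ is malnormal in $A$: $D \cap D^h \neq 1$ forces $h \in D$, and the segment collapses. In the hyperbolic case it argues globally, using that $G$ is CSA: $D^k \cap D \neq 1$ forces $k$ into the maximal abelian subgroup $B = e^{-1}\langle a,b\rangle e$ containing $D$, and $B$ is elliptic.

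\textbf{Your route.} You replace the hyperbolic case by the purely local fact (2) inside $A = \langle a,b\rangle * \langle d\rangle$. If the other edge at the $D$-type end is $C$-type there, its stabilizer is an $A$-conjugate of $C$, which meets the $A$-conjugate of $D$ trivially.

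\textbf{What each buys.} Your version never needs to know which elements of $G$ are hyperbolic, or that $G$ is CSA. It isolates a general criterion: an HNN extension $A*_{D^e=C}$ in which $D$ is malnormal in $A$ and $D \cap aCa^{-1} = 1$ for all $a \in A$ has a $2$-acylindrical Bass--Serre tree. Your fact (3) moreover shows the constant is sharp, since the length-$2$ path $eA, A, aeA$ has pointwise stabilizer $C$. The paper's version instead reuses the maximal-abelian/malnormality toolkit that drives the rest of its non-separating analysis.

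The bookkeeping you flag as the main obstacle is routine. The edges are the translates $g\epsilon = [gA, geA]$, and this orientation is $G$-invariant (there are no inversions). So every edge is $C$-type at its origin and $D$-type at its terminus.
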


\begin{proof}
    Consider a geodesic segment of length 3 in the associated Bass--Serre tree: $(gD, D, hD)$ (it suffices to consider this particular segment because the action of $G$ on edges in the Bass--Serre tree is transitive). We have to show that $D^g \cap D \cap D^h = 1$. We either have that $g,h$ are rotations about the vertices $e^{-1}A, A$, respectively or $g,h$ are translations respectively taking the vertices $A$ to $e^{-1}A$ and $e^{-1}A$ to $A$, so that $h \in eA \setminus D$ and $g \in e^{-1}A \setminus D$. 

    First, suppose that $g,h$ are both elliptic. Then $g \in A^{e^{-1}} = \langle a^{e^{-1}}, d \rangle * \langle d^{e^{-1}} \rangle$ and $h \in A$. We then have $D \cap D^g$ is either equal to $D$ or is trivial, being non-trivial if and only if $g \in \langle a^{e^{-1}}, d \rangle$. Similarly, $D \cap D^h$ is either $D$ or is trivial, being non-trivial if and only if $h \in D$ since $D$ is malnormal in $A$. Therefore, $D^g \cap D \cap D^h \neq 1$ if and only if $g \in \langle a^{e^{-1}}, d \rangle$ and $h \in D$. But then since $h \in D$, we have $hD = D$, so the segment $(gD, D, hD)$ has length at most 2, a contradiction. Therefore, we can assume that at least one of $g,h$ is hyperbolic. 

    However, if $k \in G$ is hyperbolic, then if $D^k \cap D \neq 1$, letting $B$ denote the maximal abelian subgroup of $G$ containing $D$, we would obtain $B^k \cap B \neq 1$, which implies that $k \in B$ since $B$ is malnormal (as $G$ is CSA). In this case, we have that $B = \langle a,b \rangle^{e^{-1}} < A^{e^{-1}}$, so $k \in A^{e^{-1}}$ and hence is elliptic, a contradiction. Therefore, if $g$ or $h$ is hyperbolic, we must have $D^g \cap D = 1$ or $D^h \cap D = 1$. 

    Thus, in all cases, we obtain that $D^g \cap D \cap D^h = 1$, so the splitting is 2-acylindrical.
    
\end{proof}

\begin{claim}
    \label{claim: non-acylindrical mapping torus}
    The associated splitting of the mapping torus $\Gamma = G \rtimes_{\delta} \langle t \rangle$ given by: 

    $$\Gamma = (A \oplus \langle t \rangle)*_{(D \oplus \langle t \rangle)^e = C \oplus \langle tw_2^{-1} \rangle?}$$

    is not 2-acylindrical. In fact it is not $k$-acylindrical for any $k \geq 0$.
\end{claim}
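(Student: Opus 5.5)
The plan is to exhibit, for every $k$, a geodesic segment of length $k+1$ in the Bass--Serre tree $T$ of the splitting $\Gamma = \tilde{A}*_{\tilde{D}^e = \tilde{C}}$ whose pointwise stabilizer contains a fixed non-trivial element. The natural candidate is the abelian subgroup $\tilde{B} = B \rtimes \langle t\rangle$, where $B = \langle a,b\rangle^{e^{-1}}$ is the maximal abelian subgroup of $G$ containing $D=\langle d\rangle$ (indeed $d = e^{-1}be$). Since $\delta$ fixes $B$ pointwise (it fixes $D$, and $B = C_G(D)$), $\tilde{B} = B\oplus\langle t\rangle \cong \Z^3$. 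The idea is that some element of $\tilde{B}$ acts hyperbolically on $T$, while some non-trivial element of $\tilde{B}$ is elliptic and commutes with it. The elliptic element then fixes the whole axis of the hyperbolic one, which gives arbitrarily long fixed segments.

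\textbf{Step 1.} I first check that $e^{-1}te \in \tilde{B}$ and that it is hyperbolic. From $ete^{-1} = tw_2^{-1} = tb^{-1}$ we get $t = e^{-1}(tb^{-1})e$, hence $e^{-1}te = e^{-2}(tb^{-1})e^2$. It is cleaner to work directly with $s := e^{-1}t e$. Since $t$ commutes with $A \ni a,b$, conjugating by $e^{-1}$ shows that $s$ commutes with $a^{e^{-1}}$ and $b^{e^{-1}} = d$. So $s$ centralizes $B$, and $\langle B, s\rangle$ is abelian.

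To see that $s$ is hyperbolic, I would use the normal form for the HNN extension. The relation $tet^{-1} = ed$ gives $e^{-1}te = d\,t$ after rearranging: indeed $te = edt$, so $e^{-1}te = dt$. Thus $s = dt \in \tilde{A}$, which is elliptic, and this candidate fails.

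\textbf{Step 1 (revised).} Instead I look for a hyperbolic element in the centralizer of $t$ or of $d$. The element $t$ fixes $\tilde{A}$ and $e^{-1}\tilde{A}$: indeed $e t e^{-1} = tb^{-1} \in \tilde{A}$, so $t \in \tilde{A}^{e^{-1}}$, and similarly $d = e^{-1}be$ lies in $\tilde{A}\cap\tilde{A}^{e^{-1}}$. The key point is that the edge stabilizer $\tilde{D} = \langle d, t\rangle$ is conjugated by $e$ to $\tilde{C} = \langle b, tb^{-1}\rangle = \langle b,t\rangle$, which contains $t$ again. Consequently $t \in \tilde{D}\cap \tilde{D}^{e^{-1}}$, and also $t \in \tilde{D}^{e^{-k}}$ for all $k$, because $e^{k} t e^{-k}$ lies in $\langle b,t\rangle\subset \tilde{A}$, computed by induction using $e^{-1}(\text{elements of } \langle b, t\rangle)e$. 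I will verify the induction $e^{k} t e^{-k} \in \langle b,t\rangle$ carefully. The step is: $ete^{-1} = tb^{-1}$, and $ebe^{-1}$ must be shown to lie in $\langle b,t\rangle$. Here lies the problem: $e b e^{-1}$ is generally not in $A$, so the induction should run with $e^{-1}$ instead, using $e^{-1}be = d$ and $e^{-1}te = dt$, both in $\tilde{D}$. Repeating, $e^{-2}te^{2} = e^{-1}dte = e^{-1}d e\, dt$, and $e^{-1}de$ need not be in $A$ either.

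\textbf{Step 2, the main obstacle.} I therefore choose the element to be $g := t^{n}$ combined with powers of $d$ and $b$. The cleanest approach is to find $x \in \tilde{A}$, commuting with $e$ up to the twist, such that $x$ is fixed by conjugation by a hyperbolic element. The likely choice is to use the rank-3 abelian subgroup $\tilde{B} = \langle a^{e^{-1}}, d, t\rangle$ together with the hyperbolic element $h = e\,a\,e^{-1}\cdots$. Concretely, my plan is to find any $h\in\Gamma$ hyperbolic on $T$ commuting with a non-trivial $z$. By Proposition \ref{prop: properties of groups acting acylindrically on trees}, an acylindrical non-elementary action forces $C_\Gamma(h)$ to be cyclic. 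So it suffices to exhibit a hyperbolic $h$ with $\Z^2 \le C_\Gamma(h)$; any non-trivial $z\in C_\Gamma(h)\setminus\langle h\rangle$ with a power that is elliptic then fixes $\mathrm{Axis}(h)$ pointwise, since $z$ preserves the axis and an elliptic isometry preserving a line fixes it. I would try $h = e\,t$, or $h = e a$ with $z = t$-twisted elements, computing $C_\Gamma(et)$ via $t e t^{-1} = ed$. The verification that $h$ is hyperbolic comes from its reduced HNN normal form having non-zero exponent sum in $e$. Finding the right commuting pair, and proving it commutes, is the hard step. Once it is found, non-$k$-acylindricity for all $k$ follows immediately, since the axis is infinite.
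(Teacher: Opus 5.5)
\textbf{There is a genuine gap.} Your overall strategy is sound. If a non-trivial elliptic $z$ commutes with a hyperbolic $h$, then $z$ fixes some vertex $v$ and hence every $h^n v$, so it fixes the axis of $h$ pointwise and no $k$ can work. But the proposal stops exactly where the proof has to happen: you never exhibit such a pair, and the candidates you suggest do not work.
\begin{itemize}
\item $\tilde{B}$ contains no hyperbolic element at all. Indeed $\tilde{B} = \langle e^{-1}ae, d, t\rangle = e^{-1}(\langle a,b\rangle \oplus \langle t\rangle)e \le e^{-1}\tilde{A}e$, which is the stabilizer of the vertex $e^{-1}\tilde{A}$. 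So your opening idea cannot succeed.
\item The natural choice $z = t$ commutes with neither $et$ nor $ea$, since $t(et)t^{-1} = edt$ and $t(ea)t^{-1} = eda$.
\end{itemize}
Two smaller points. First, $\delta(D)=D$ and $B = C_G(D)$ only give $\delta(B)=B$, not that $\delta$ fixes $B$ pointwise. That needs the computation $\delta(e^{-1}ae) = d^{-1}e^{-1}aed = e^{-1}b^{-1}abe = e^{-1}ae$. Second, an elliptic isometry preserving a line need not fix it, since it may act on it as a reflection. What you actually need is that it commutes with $h$, as in the orbit argument above.

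\textbf{The missing idea.} Build $h$ from two elliptic elements that both centralize $t$ but fix the two different endpoints of the edge with stabilizer $\tilde{D}$. Take $z = t$ and $h = a\cdot a^{e^{-1}}$, where $a^{e^{-1}} = e^{-1}ae$.
\begin{itemize}
\item Commutation: $a \in A$ commutes with $t$, and $e^{-1}ae$ does by the computation just given.
\item Hyperbolicity: $a$ fixes $\tilde{A}$ and $e^{-1}ae$ fixes $e^{-1}\tilde{A}$. Neither lies in $\tilde{D} = \tilde{A} \cap e^{-1}\tilde{A}e = \langle d,t\rangle$, the stabilizer of the edge joining these two vertices. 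Hence their fixed subtrees lie on opposite sides of this edge, and $h$ is hyperbolic with translation length $2$. Equivalently, $ae^{-1}ae$ is cyclically reduced in the HNN normal form, since $a \notin \tilde{C} = \langle b,t\rangle$ and $a \notin \tilde{D}$.
\end{itemize}
Then $t$ fixes the whole axis of $h$, a bi-infinite line, and the claim follows.

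This is essentially the paper's proof. The paper writes down explicitly a bi-infinite geodesic of edges $g_i\tilde{D}$, $h_i\tilde{D}$, with $g_i,h_i$ alternating products of $a$ and $a^{e^{-1}}$ (the $h_i\tilde{D}$ run along the axis of $a\,a^{e^{-1}}$). It checks geodesicity by induction and concludes because $t$ commutes with both $a$ and $a^{e^{-1}}$. Your axis argument would replace that inductive check, but only once the pair $(h,t)$ has actually been found.
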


\begin{proof}
    Denote $\tilde{A} = A \oplus \langle t \rangle$ and $\tilde{D} = D \oplus \langle t \rangle$. Consider $g = e^{-1}ae$ and $h = a$. Then $g \in \mathrm{Stab}_{\Gamma}(e^{-1}\tilde{A}) \setminus \mathrm{Stab}_{\Gamma}(\tilde{D})$ and $h \in \mathrm{Stab}_{\Gamma}(\tilde{A}) \setminus \mathrm{Stab}_{\Gamma}(\tilde{D})$, so that $(g\tilde{D}, \tilde{D}, h\tilde{D})$ is a geodesic edge path of length 3 in the associated Bass--Serre tree. We have: 

    \begin{align*}
        \tilde{D} \cap \tilde{D}^g &= (D \oplus \langle t \rangle) \cap (D^g \oplus \langle t^g \rangle) \\
        &= (D \oplus \langle t \rangle) \cap (D^g \oplus \langle t^{e^{-1}ae} \rangle) \\
        &= (D \oplus \langle t \rangle) \cap (D^g \oplus \langle (tb^{-1})^{e^{-1}a} \rangle) \text{, since $t^e = tw_2^{-1} = tb^{-1}$}\\
        &= (D \oplus \langle t \rangle) \cap (D^g \oplus \langle t^{e^{-1}a}(b^{-1})^{e^{-1}a} \rangle) \\
        &=(D \oplus \langle t \rangle) \cap (D^g \oplus \langle (td)d^{-1}\rangle) \text{, since $t^{e^{-1}} = tw_1 = td$ and $b^{e^{-1}} = d$} \\
        &= (D \oplus \langle t \rangle) \cap (D^g \oplus \langle t\rangle) \\
        &\supseteq \langle t \rangle 
    \end{align*}

    Similarly, we have $\langle t \rangle \cap \langle t \rangle^h = \langle t \rangle \cap \langle t \rangle^a = \langle t \rangle$ since $t$ centralizes $A$. Thus, $\tilde{D} \cap \tilde{D}^h \supseteq \langle t \rangle$. We conclude that $\tilde{D}^g \cap \tilde{D} \cap \tilde{D}^h \supseteq \langle t \rangle \neq 1$, so the above splitting of $\Gamma$ is not 2-acylindrical. 

    In fact, the splitting is not $k$-acylindrical for any $k$. Indeed, consider the sequence of group elements $g_1 = a^{e^{-1}}, g_2 = a^{e^{-1}}a, g_3 = a^{e^{-1}}aa^{e^{-1}}, \ldots$ and $h_1 = a, h_2 = aa^{e^{-1}}, h_3=aa^{e^{-1}}a, \ldots$ (the $g_i,h_i$ are an alternating product of $a$ and $a^{e^{-1}}$ where we alternate multiplying on the right by $a$ and $a^{e^{-1}}$). Then the sequence of edges $\gamma = (\ldots,g_2 \tilde{D}, g_1 \tilde{D}, \tilde{D}, h_1 \tilde{D}, h_2 \tilde{D}, \ldots)$ is a geodesic. Indeed, it's enough to prove that $(\tilde{D}, h_1 \tilde{D}, h_2 \tilde{D}, \ldots)$ is a geodesic, since $(\ldots,g_2 \tilde{D}, g_1 \tilde{D}, \tilde{D}) = \tilde{D} \cup a^{e^{-1}}(\tilde{D}, h_1 \tilde{D}, h_2 \tilde{D}, \ldots)$, and $\tilde{D}$ and $a^{e^{-1}}(\tilde{D}, h_1 \tilde{D}, h_2 \tilde{D}, \ldots)$ meet at a vertex, so their concatenation is a geodesic. We proceed by induction on $n$. 

    For $n=1$, since $a \notin \tilde{D}$, we have that $(\tilde{D}, h_1\tilde{D}) = (\tilde{D}, a\tilde{D})$ is a geodesic. 

    Suppose that $(\tilde{D}, h_1 \tilde{D}, h_2 \tilde{D}, \ldots, h_n \tilde{D})$ is a geodesic. Then $(h_n \tilde{D}, h_{n+1} \tilde{D}) = h_n(\tilde{D}, a\tilde{D})$ or $(h_n \tilde{D}, h_{n+1} \tilde{D}) = h_n(\tilde{D}, a^{e^{-1}}\tilde{D})$, which are geodesics in each case. Since $(\tilde{D}, h_1 \tilde{D}, h_2 \tilde{D}, \ldots, h_{n+1} \tilde{D}) = (\tilde{D}, h_1 \tilde{D}, h_2 \tilde{D}, \ldots, h_n \tilde{D}) \cup (h_n \tilde{D}, h_{n+1}\tilde{D})$ and since $(\tilde{D}, h_1 \tilde{D}, h_2 \tilde{D}, \ldots, h_n \tilde{D}), (h_n \tilde{D}, h_{n+1}\tilde{D})$ meet at a single vertex (since we do not have $h_{n+1}\tilde{D} = h_{n-1} \tilde{D}$, as otherwise $a a^{e^{-1}}\tilde{D} = \tilde{D}$ or $a^{e^{-1}} a \tilde{D} = \tilde{D}$, a contradiction since neither $aa^{e^{-1}}$ nor $a^{e^{-1}}a$ are in $\tilde{D}$), we have that $(\tilde{D}, h_1 \tilde{D}, h_2 \tilde{D}, \ldots, h_{n+1} \tilde{D})$ is a geodesic. 

    However, since $a$ and $a^{e^{-1}}$ both commute with $t$, we have that $t \in P\mathrm{Stab}_{\Gamma}(\gamma)$. Hence, the splitting is not $k$-acylindrical for any $k$. 
\end{proof}

That is why the non-separating Dehn twist case requires a modified approach to find a canonical tree for solving the conjugacy problem. We do that by enlarging the vertex group in the HNN splitting of $G$ first and then use the \textit{tree of cylinders} construction as explained below.

\subsection{Tree of cylinders of mapping torus}

\begin{thm}
    \label{thm: Tree of cylinders of mapping torus}
    Let $G$ be a finitely generated torsion-free CSA group which splits over a free abelian subgroup as $G = A*_{D^e=C}$. Denote $B$ the maximal abelian subgroup of $G$ containing $D$. Let $\delta$ be a non-separating Dehn twist of this HNN extension and let $\Gamma = G \rtimes \langle t \rangle$ be the associated mapping torus. Let $T$ be the Bass--Serre tree associated with the $\Gamma$-splitting $\Gamma = \tilde{A}*_{\tilde{D}^e=\tilde{C}}$. The relation on edges of $T$ defined by: 
    
    $$E \sim F \iff \langle \Gamma_E, \Gamma_F \rangle \text{ is abelian}$$

    is an equivalence relation. 
\end{thm}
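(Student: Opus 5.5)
Reflexivity and symmetry are immediate: each edge stabilizer $\Gamma_E$ is a conjugate of $\tilde{D} = D \oplus \langle t \rangle$, which is abelian, and the condition is symmetric in $E,F$. The whole content is transitivity. The plan is to show that \emph{commutative transitivity} holds for non-central elements of edge stabilizers. Concretely, for each edge $E$ we attach a canonical maximal abelian subgroup $M_E \leq \Gamma$ containing $\Gamma_E$, and then show that $E \sim F$ if and only if $M_E = M_F$; an equality relation is transitive.

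To build $M_E$, translate by $\Gamma$ so that $\Gamma_E = \tilde{D}^g$ for some $g \in \Gamma$, and work with $\tilde{D}$ itself. In $G$, let $B$ be the maximal abelian subgroup containing $D$. Then $B$ is malnormal, since $G$ is CSA, and $\delta$ preserves $B$, since $\delta$ fixes $D \neq 1$ pointwise. So the normalizer of $B$ inside $\tilde{B} = B \rtimes \langle t \rangle$ is all of $\tilde{B}$. Now take the centralizer $Z_\Gamma(\tilde{D})$ and compute it with the normal form $\Gamma = G \rtimes \langle t \rangle$. Write an element as $x = g t^k$ with $g \in G$. If $x$ commutes with a nontrivial $d \in D$, then $g\,\delta^k(d)\,g^{-1} = d$, i.e.\ $g d g^{-1} = d$, because $\delta^k(d) = d$. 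Commutative transitivity in $G$ then forces $g \in B$. So
\[
Z_\Gamma(D) = \{ b t^k : b \in B,\ k \in \Z \} = \tilde{B}.
\]
The same argument applied to any nontrivial abelian subgroup $H \leq \tilde{B}$ with $H \cap G \neq 1$ gives $Z_\Gamma(H) \leq \tilde{B}$. Set $M_{\tilde{D}\text{-edge}} := \tilde{B}$, and in general $M_E := \tilde{B}^g$ when $\Gamma_E = \tilde{D}^g$.

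For transitivity, suppose $E \sim F$ with $\Gamma_E = \tilde{D}^g$ and $\Gamma_F = \tilde{D}^h$. Since $\langle \Gamma_E, \Gamma_F \rangle$ is abelian, $D^h$ commutes with $D^g \neq 1$. These subgroups lie in $G$, so commutative transitivity in $G$ gives $D^h \leq B^g$, and malnormality of $B^g$ then gives $B^h = B^g$. Hence $\tilde{B}^h = Z_\Gamma(D^h) = Z_\Gamma(D^g) = \tilde{B}^g$, so $E \sim F$ implies $M_E = M_F$. The converse needs care, and here I would only use the forward direction. Suppose $E \sim F$ and $F \sim K$. Then $M_E = M_F = M_K =: M$, and $\Gamma_E, \Gamma_K \leq M$. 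It remains to show that $\Gamma_E$ and $\Gamma_K$ commute inside $M$.

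This is the main obstacle, because $M = \tilde{B}^g$ is $B \rtimes_\delta \langle t \rangle$, which need not be abelian: $\delta$ may act on $B$ by a nontrivial unipotent map. My first attempt is to show that $\delta|_B = \mathrm{id}$ always holds. In the HNN setting, $\delta$ restricted to $B$ is determined by $D$: it fixes $D$, and any $b \in B$ is either in $A$, where it is fixed, or conjugated into $A$ via $e$, where $\delta$ acts as conjugation by $w_1 \in D \leq B$, which commutes with $b$. With this, $\tilde{B} = B \oplus \langle t \rangle$ is abelian.

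Should that fail in some configuration, the fallback is to show that every edge stabilizer inside $M$ lies in the fixed subgroup $\mathrm{Fix}(\delta|_B) \oplus \langle t \rangle$, which is abelian. This works because each edge stabilizer is a conjugate of $\tilde{D}$ by an element of $M$, and $D$ is fixed by $\delta$. Either way, $\Gamma_E$ and $\Gamma_K$ lie in a common abelian subgroup, so $\langle \Gamma_E, \Gamma_K \rangle$ is abelian and $E \sim K$.
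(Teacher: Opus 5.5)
The genuine gap is in your final step. Up to there the setup is fine, with one small point: $Z_\Gamma(D^h)=Z_\Gamma(D^g)$ does not follow merely from $B^g=B^h$, because different subgroups of $B$ can have different centralizers in $\Gamma$. Use instead $N_\Gamma(B)=\tilde B$, which gives $\tilde B^g=N_\Gamma(B^g)=N_\Gamma(B^h)=\tilde B^h$.

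Your first attempt, $\delta|_B=\mathrm{id}$, is false. Write $G=\langle x,y\rangle*\langle z\rangle$ as $A*_{D^e=C}$ with $A=\langle x\rangle*\langle z\rangle$, $D=C=\langle x\rangle$, $e=y$, and let $\delta(y)=yx$. Then $B=\langle x,y\rangle$ and $\delta(y)\neq y$, so $\tilde B$ is non-abelian. The faulty step is the dichotomy ``$b\in A$, or $b$ is conjugated into $A$ by $e$''. $B$ may contain elements acting hyperbolically on the Bass--Serre tree of $G$, for instance $e$ itself whenever $e$ centralizes $D$.

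The fallback does not close the gap either. For $g=bt^k\in\tilde B$ one computes $\tilde D^g=D\oplus\langle b\,\delta(b)^{-1}t\rangle$. So the containment $\tilde D^g\le\mathrm{Fix}(\delta|_B)\oplus\langle t\rangle$ is equivalent to $b\,\delta(b)^{-1}\in\mathrm{Fix}(\delta)$. The fact that $\delta$ fixes $D$ says nothing about this, and if $(\delta|_B-1)^2\neq 0$ the containment would be false. In effect, the fallback is exactly the converse implication $M_E=M_F\Rightarrow E\sim F$ that you set aside as needing care.

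The missing ingredient in the paper is Lemma \ref{lemma: action of delta on B}: $\delta(b)=bw_1^m$ for every $b\in B$. It is proved by cutting $b$ along a geodesic in the Bass--Serre tree of $G$ into factors lying in $A\cup Ae$ (or in $A^{e^{-1}}\cup e^{-1}A$). It gives $t^b=w_1^{-m}t$, so $\tilde D^g=\tilde D$ for all $g\in\tilde B$. The paper then shows that $\langle\tilde D^g,\tilde D\rangle$ is abelian if and only if $g\in\tilde B$, and transitivity follows because $\tilde B$ is a subgroup.

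Alternatively, you could simply have kept the hypothesis you discarded. Translate so that $\Gamma_F=\tilde D$. Then $E\sim F$ and $F\sim K$ put both $\Gamma_E$ and $\Gamma_K$ inside
\[
Z_\Gamma(\tilde D)=Z_\Gamma(D)\cap Z_\Gamma(t)=\tilde B\cap\bigl(\mathrm{Fix}(\delta)\oplus\langle t\rangle\bigr)=\mathrm{Fix}(\delta|_B)\oplus\langle t\rangle ,
\]
which is abelian. This proves transitivity directly from your own computation $Z_\Gamma(D)=\tilde B$, without Lemma \ref{lemma: action of delta on B} and without the subgroups $M_E$.
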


To prove Theorem \ref{thm: Tree of cylinders of mapping torus}, we will need the following lemma. We use the notation from the beginning of Section \ref{section: non-separating Dehn twists}.

\begin{lemma}
    \label{lemma: action of delta on B}
    For each $b \in B$, there exists $m \in \Z$ such that $\delta(b) = bw_1^m$. 
\end{lemma}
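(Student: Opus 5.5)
Since $\delta$ fixes $D$ pointwise, $\delta(B)$ is an abelian subgroup containing $D$; it is maximal abelian because $\delta$ is an automorphism, so $\delta(B)=B$. The plan is to compute $\delta$ on $B$ through the Bass--Serre tree $T_G$ of the HNN splitting $G = A*_{D^e=C}$, case by case according to how $B$ acts there.

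\textbf{Case 1: $B$ is elliptic on $T_G$.} Since $B$ contains $D$, which fixes the edge $[e^{-1}A, A]$ (after choosing the orientation), $B$ fixes a vertex of the form $gA$.
\begin{itemize}
\item If $B \le A$, then $\delta|_B = \mathrm{id}$, and $m=0$ works.
\item If $B \le A^{e^{-1}}$, write $b = e^{-1}ae$ with $a \in A$. Since $\delta(e)=ew_1$, we get $\delta(b)=w_1^{-1}e^{-1}aew_1 = w_1^{-1}bw_1 = b$, because $w_1\in D\le B$ and $B$ is abelian. Again $m=0$ works.
\end{itemize}
More generally, $B$ fixes the whole subtree $\mathrm{Fix}(D)$ if it is elliptic. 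Using that $B$ is the centraliser of any nontrivial element of $D$ (CSA), I would argue that the vertex fixed by $B$ is adjacent to an edge fixed by $D$, which reduces to the two subcases above.

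\textbf{Case 2: $B$ is not elliptic.} Then some $b_0\in B$ is hyperbolic. Since $B$ is abelian, finitely generated, and contains the edge-fixing subgroup $D$, there is a $B$-invariant axis $L$ on which $D$ acts trivially. Hence $D$ fixes $L$ pointwise, and $B/\mathrm{Stab}(L)$ acts on $L$ by translations, giving a homomorphism $\ell: B\to\Z$. The edges of $L$ alternate in type, and crossing the edge through the stable letter accounts for the factor $w_1$. Concretely, $B$ should be generated by $B\cap A^g$ (the kernel of $\ell$, fixed by $\delta$ up to conjugation by a power of $w_1$ as in Case 1) together with an element of the form $b_1 = g e a g^{-1}$-type, which crosses one $e$-edge. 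For the latter I would compute $\delta(b_1)=b_1w_1^{\pm1}$ directly from $\delta(e)=ew_1$, using that the conjugating elements lie in $A$ and are fixed, and that $w_1$ commutes with $B$. By the homomorphism property, $\delta(b)=bw_1^{\ell(b)}$ for all $b \in B$.

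A cleaner route for Case 2 is to note that $b\mapsto b^{-1}\delta(b)$ is a homomorphism $B\to B$ (because $B$ is abelian and $\delta$-invariant), so it suffices to check the claim on generators. Even better, I would write $b$ in HNN normal form $b=a_0e^{\epsilon_1}a_1\cdots e^{\epsilon_n}a_n$. Applying $\delta$ inserts $w_1$ or $w_1^{-1}$ after each $e$ (respectively before each $e^{-1}$), which gives
\[
\delta(b)=a_0(e w_1)^{\epsilon_1}\cdots .
\]
Reorganising this expression using the commutation of $w_1$ with the relevant conjugates is exactly the delicate part.

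\textbf{Main obstacle.} The hard step is showing that these inserted copies of $w_1$ collapse to a single power $w_1^m$ sitting at the end. I expect to handle this via malnormality (CSA): the element $b^{-1}\delta(b)$ lies in $B$, and I would show it lies in $\langle w_1\rangle$ by proving $b^{-1}\delta(b)$ is a product of conjugates of $w_1^{\pm1}$ by elements which (by the normal form, Lemma \ref{lemma: normal form theorem for amalgamated free products} analog for HNN) must lie in $B$ itself. Those conjugates then equal $w_1^{\pm1}$ since $B$ is abelian, so the product is $w_1^m$.
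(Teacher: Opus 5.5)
\textbf{There is a genuine gap: the step you call the main obstacle is never proved.} Your preliminary observations are fine: $\delta(B)=B$, the map $b\mapsto b^{-1}\delta(b)$ is a homomorphism $B\to B$, and applying $\delta$ to a normal form inserts copies of $w_1^{\pm1}$ conjugated by prefixes of $b$. But you never show that these conjugating prefixes lie in $B$. You point at malnormality, but the normal form theorem alone does not put the prefixes in $B$.

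The same missing fact undermines your case analysis.
\begin{itemize}
\item In Case~1, the claim that an elliptic $B$ fixes all of $\mathrm{Fix}(D)$ is false whenever $B\neq D$. Indeed, $\mathrm{Fix}(D)$ contains $E_0=[e^{-1}A,A]$, whose stabilizer is $D$. Knowing that $B$ fixes a vertex of some edge fixed by $D$ does not place $B$ in $A$ or $A^{e^{-1}}$ unless that edge is a $B$-translate of $E_0$.
\item In Case~2 you give no reason why $\ell(B)=\Z$ rather than $k\Z$. Nor do you explain why $\ker\ell$ is fixed by $\delta$: on $gAg^{-1}$ the twist is conjugation by $\delta(g)g^{-1}$, which in general is not a power of $w_1$. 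Nor do you explain why the generator crossing one edge has fixed conjugators.
\end{itemize}

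\textbf{The missing idea is short, and it is the heart of the paper's proof.} For $b\in B$ we have $bDb^{-1}=D$. So $D$ fixes both $E_0$ and $bE_0$ in $T_G$, and hence fixes pointwise the smallest subtree containing them. Any edge $qE_0$ in that subtree satisfies $D\le qDq^{-1}$, so $1\neq D\le B\cap qBq^{-1}$. Malnormality of maximal abelian subgroups in the CSA group $G$ then gives $q\in B$.

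With this, your normal-form route closes immediately. Take a reduced normal form $b=a_0e^{\epsilon_1}a_1\cdots e^{\epsilon_n}a_n$. Then $\delta(b)=\bigl(\prod_i q_iw_1^{\epsilon_i}q_i^{-1}\bigr)b$, where each $q_i$ is a prefix of $b$ and $q_iE_0$ is an edge of the geodesic $[A,bA]$. That geodesic lies in the subtree fixed by $D$, so $q_i\in B$. Each factor is therefore $w_1^{\epsilon_i}$, and $\delta(b)=w_1^mb=bw_1^m$.

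The paper argues slightly differently. It takes the geodesic edge path $g_0E_0,\dots,g_nE_0$ from $E_0$ to $bE_0$ and gets $g_i\in B$ by the argument above. It then telescopes $b=b_1\cdots b_n$ with $b_i=g_{i-1}^{-1}g_i\in B$. Each $b_i$ moves $E_0$ to an adjacent edge, so $\delta(b_i)=b_iw_1^{0,\pm1}$ can be read off directly, using commutativity of $B$ in the $e^{-1}A$ case. Either way, the elliptic/hyperbolic case split is unnecessary.
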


\begin{proof}
    Let $b \in B$. We first show that there exist $b_1,\ldots,b_n \in B$ such that either $b_i \in A \cup Ae$ for all $i$ or $b_i \in A^{e^{-1}} \cup e^{-1}A$ for all $i$ and $b = b_1 b_2 \cdots b_n$.
    
    In the tree $T_G$, connect the edges $D$ and $bD$ by a geodesic path $\gamma$ (where $D$ here refers to the edge in the fundamental domain of $T_G$ having vertices $e^{-1}A$ and $A$). Write $\gamma = (g_0D = D, g_1D, \ldots, g_nD = bD)$ ($g_0 = 1$ and $g_n = b$). Since the extremal edges of $\gamma$ are $D$ and $bD$, we have that $P\mathrm{Stab}(\gamma) = D \cap D^b = D$ since $b \in B$. Thus, for all $i$, we have $D \cap D^{g_i} = P\mathrm{Stab}([D, g_i D]) \geq P\mathrm{Stab}(\gamma) = D$, so that $g_i \in B$ for all $i$. Set $b_i = g_{i-1}^{-1}g_{i}$ for all $i = 1,\ldots,n$. Note that $b_i \in B$ for all $i$ since $g_i \in B$ for all $i$. Since $g_{i-1}D$ and $g_iD$ are adjacent edges in $T_G$, we have that $g_{i-1}^{-1}g_iD$ is adjacent to $D$. Therefore, either $b_i = g_{i-1}^{-1}g_i \in A \cup Ae$ or $b_i = g_{i-1}^{-1}g_i \in A^{e^{-1}} \cup e^{-1}A$. Since the edges $g_iD$ are at increasing distance from $D$ and since $\gamma$ is a geodesic edge path, we must have either $b_i = g_{i-1}^{-1}g_i \in A \cup Ae$ or $b_i = g_{i-1}^{-1}g_i \in A^{e^{-1}} \cup e^{-1}A$ for all $i$. 

    Given such $b_i$, we have $\delta(b_i) = b_i d_i$ for $d_i \in \{1, w_1, w_1^{-1}\}$, since if $b_i \in A \cup Ae$, then $\delta(b_i) = b_i$ (if $b_i \in A$) or $\delta(b_i) = b_i w_1$ (if $b_i \in Ae$), and if $b_i \in A^{e^{-1}} \cup e^{-1}A$, then $\delta(b_i) = b_i$ (if $b_i \in A^{e^{-1}}$) or $\delta(b_i) = b_i w_1^{-1}$ (if $b_i \in e^{-1}A$). Therefore, if $b \in B$, then we have that $\delta(b) = b w_1^m$ for some $m \in \Z$.
\end{proof}

\begin{proof} [Proof of Theorem \ref{thm: Tree of cylinders of mapping torus}]
    Since edge stabilizers of $T$ are abelian, it is clear that $\sim$ is reflexive and symmetric, so it remains to prove that it is transitive. Recall that edge stabilizers of $T$ are of the form $\tilde{D}^g = D^g \oplus \langle t^g \rangle$ for $g \in \Gamma$. Note that $\sim$ descends to a conjugation invariant relation on edge stabilizers of $T$, so it suffices to prove that if $\langle \tilde{D}^g, \tilde{D}\rangle$ and $\langle \tilde{D}^h, \tilde{D}\rangle$ are abelian then so is $\langle \tilde{D}^{g^{-1}h}, \tilde{D}\rangle$. 

    We begin by finding a necessary and sufficient condition on $g$ to ensure that $\langle \tilde{D}^g, \tilde{D}\rangle$  is abelian. 

    For $\langle \tilde{D}^g, \tilde{D}\rangle$ to be abelian, we need that each element of $D^g$ as well as $t^g$ commute with each element of $\tilde{D}$. Writing $g = g_0 t^n$ for some $n \in \Z$ and $g_0 \in G$, we see that $\tilde{D}^g = \tilde{D}^{g_0}$, so we can assume that $g \in G$. 

    For $D^g$ to centralize $\tilde{D}$, we need $D^g$ to centralize $D$ and $t$. We have that $D^g$ centralizes $D$ if and only if $g \in B$, since if $D^g$ centralizes $D$, then for any $d \in D \setminus 1$ we have $d^g \in B$ and so $1 \neq d \in B \cap B^{g^{-1}}$, giving that $g \in B$ by malnormality of $B$ in $G$ (since $G$ is CSA). Conversely, if $g \in B$, then $D^g = D$ which centralizes $D$ and $t$. Therefore, $D^g$ centralizes $\tilde{D}$ if and only if $g \in B$. We now check that for each $g \in B$, $t^g$ also centralizes $\tilde{D}$. We have $t^g = gtg^{-1} = g(g^{-1})^t t$. If $g \in B$, by Lemma \ref{lemma: action of delta on B}, we have that $g^t = \delta(g) = g w_1^n$ for some $n \in \Z$. Thus, $g (g^{-1})^t = g (gw_1^n)^{-1} = w_1^{-n}$, so that $t^g = w_1^{-n} t$, which centralizes both $D$ and $t$. 

    Therefore, for $g \in G$, we have that $\tilde{D}^g$ centralizes $\tilde{D}$ if and only if $g \in B$. Thus, for $g \in \Gamma$, $\tilde{D}^g$ centralizes $\tilde{D}$ if and only if $g \in \tilde{B} = B \rtimes \langle t \rangle$ (this shows, in fact, that $\langle \tilde{D}^g, \tilde{D} \rangle$ being abelian implies that $\tilde{D}^g=\tilde{D}$). 

    If $\tilde{D}^g$ and $\tilde{D}^h$ both centralize $\tilde{D}$, then $g,h \in \tilde{B}$ and hence $g^{-1}h \in \tilde{B}$, therefore, $\tilde{D}^{g^{-1}h}$ centralizes $\tilde{D}$ and so $\langle \tilde{D}^{g^{-1}h}, \tilde{D} \rangle$ is abelian. We conclude that $\sim$ is an equivalence relation.
\end{proof}

We define a \textit{cylinder} of $T$ to be an equivalence class of an edge. Since no two distinct cylinders of $T$ can contain an edge, two distinct cylinders of $T$ meet in at most one vertex. By \cite[Section 4.5]{Limit_groups_groups_acting_freely}, it follows that the \textit{dual graph} $T_C$ of $T$, defined as the bipartite graph having white vertices the vertices of $T$ contained in at least two cylinders and the black vertices the cylinders of $T$, with a white vertex $v$ adjacent to a black vertex $b$ if and only if $v \in b$ in $T$, is a tree. We refer to $T_C$ as the \textbf{tree of cylinders of $T$}. 

Since $\sim$ is invariant under translation by $\Gamma$, we have that $T_C$ is a $\Gamma$-tree. Since $T$ has only one orbit of vertices and edges, $T_C$ has one orbit of white vertices and one orbit of black vertices. 

Note that the stabilizer of the cylinder containing $\tilde{D}$ is $\tilde{B} := B \rtimes \langle t \rangle$, since as we showed above, for $g \in \Gamma$ we have $\langle \tilde{D}^g, \tilde{D} \rangle$ abelian if and only if $g \in \tilde{B}$. Since there is only one orbit of edges in $T$, there is only one orbit of cylinders in $T_C$ and the stabilizer of the cylinder containing $\tilde{D}$ is $\tilde{B}$. Thus, by the orbit stabilizer theorem, we can identify cylinders with left cosets of $\tilde{B}$ in $\Gamma$. 

We have that $T_C$ has at most two orbits of edges, since each edge $E$ of $T_C$ has a translate $gE$ that is adjacent to $\tilde{B}$. Then the white vertex $v$ of $gE$ is in the cylinder containing $\tilde{D}$, so $v = he^{-1} \tilde{A}$ or $v = h \tilde{A}$ for some $h \in \tilde{B}$, so that $h^{-1}gE = [e^{-1}\tilde{A}, \tilde{B}]$ or $[\tilde{B}, \tilde{A}]$. 

\subsection{Tree of cylinders is acylindrical for splittings with maximal vertex group}
\label{section: tree of cylinders for non-separating Dehn twist}

\begin{lemma}
    \label{lem: Fixed point set of powers}
    Let $G$ be a finitely generated torsion-free CSA group which splits over a free abelian subgroup as $G = A*_{D^e=C}$. Let $\delta$ be a non-separating Dehn twist of this HNN extension, with $\delta(e) = ew$ for some $w \in D \setminus 1$. Let $B$ be the maximal abelian subgroup of $G$ containing $D$. Then for each $n \neq 0$, we have $\mathrm{Fix}(\delta \vert_B^n) = \mathrm{Fix}(\delta \vert_B)$.
\end{lemma}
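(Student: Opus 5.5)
Our plan rests on Lemma \ref{lemma: action of delta on B}. First, $\delta$ restricts to an automorphism of $B$: it fixes $D$ pointwise, and $B$ is the centralizer of any nontrivial element of $D$, since $G$ is CSA. By Lemma \ref{lemma: action of delta on B}, for every $b \in B$ there is an integer $m(b)$ with $\delta(b) = b w^{m(b)}$.

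The first step is to show that $m$ is well defined and is a homomorphism. Well-definedness follows because $B$ is free abelian, hence torsion-free, and $w \neq 1$, so the exponent is unique. For the homomorphism property, note that $B$ is abelian and $\delta$ is a homomorphism, so
\[
\delta(bb') = \delta(b)\delta(b') = bb' w^{m(b)+m(b')}.
\]
Hence $m : B \to \Z$ is a homomorphism, and $\mathrm{Fix}(\delta\vert_B) = \ker m$.

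The second step is to compute the iterates. Since $w \in D$ is fixed by $\delta$, induction gives
\[
\delta^n(b) = b w^{n m(b)}
\]
for all $n \in \Z$; negative $n$ are handled either by applying the formula to $\delta^{-1}$ or by checking directly that $\delta^{-1}(b) = b w^{-m(b)}$. Therefore $\delta^n(b) = b$ holds if and only if $w^{n m(b)} = 1$. Because $w \neq 1$ and $B$ is torsion-free, this is equivalent to $n m(b) = 0$, which for $n \neq 0$ means $m(b) = 0$. So $\mathrm{Fix}(\delta\vert_B^n) = \ker m = \mathrm{Fix}(\delta\vert_B)$.

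The only delicate point is that Lemma \ref{lemma: action of delta on B} was stated with the notation $w_1$ and in the free-product-of-free-abelian setting, but its proof uses only the CSA property (malnormality of $B$) and the tree $T_G$. It therefore applies verbatim here with $w = w_1$. Beyond that, torsion-freeness of $B$ and $w \neq 1$ are the key inputs.
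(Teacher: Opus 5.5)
Your proof is correct and is essentially the paper's argument. Both invoke Lemma~\ref{lemma: action of delta on B} to write $\delta(b) = bw^{m}$, iterate using $\delta(w)=w$ to get $\delta^n(b) = bw^{nm}$, and use that $w$ has infinite order to force $m=0$ when $n\neq 0$. Your extra details are harmless refinements: that $m$ is a homomorphism, the treatment of negative $n$, and concluding $m=0$ rather than the paper's slip ``$m=n=0$''. You also only need torsion-freeness of $B$, which is immediate from $G$ being torsion-free, rather than the claim that $B$ is free abelian.
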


\begin{proof}
    By Lemma \ref{lemma: action of delta on B}, for each $b \in B$ we have that $\delta(b) = bw^m$ for some $m \in \Z$. Thus, for $n \neq 0$, we have $\delta^n(b) = bw^{mn}$. If $\delta^n(b) = b$, then $bw^{mn} = b$, which implies $w^{mn} = 1$, so that $m=n=0$ since $w$ has infinite order. Thus, $\delta(b) = b$.
\end{proof}

\begin{thm}
    \label{thm: Enlarging the splitting}
    Let $G$ be a finitely generated torsion-free CSA group and let $\delta$ be a non-trivial, non-separating Dehn twist of $G$. Then $G$ splits as an HNN extension $G = A*_{D^e=C}$ with $A = \mathrm{Fix}(\delta)$, $D$ free abelian and maximal abelian in $A$ and such that $\delta(e) = ew_1$ for some $w_1 \in D \setminus 1$. 
\end{thm}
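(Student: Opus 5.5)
We start from a given splitting $G = A_0 *_{D_0^{e}=C_0}$ with $\delta|_{A_0}=\mathrm{id}$ and $\delta(e)=ew_1$, $w_1\in D_0\setminus 1$ (non-triviality of $\delta$ forces $w_1\neq 1$). The plan follows the separating case, Theorem \ref{lemma: enlarging the splitting in separating case}. First we enlarge $A_0$ so that the edge group becomes maximal abelian in the vertex group. Then we identify the enlarged vertex group with $\mathrm{Fix}(\delta)$.

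\textbf{Step 1 (enlarging).} Let $B$ be the maximal abelian subgroup of $G$ containing $D_0$. Since $G$ is CSA, $B$ is the centralizer of $w_1$ and is malnormal. Let $B' = B\cap \mathrm{Fix}(\delta)$. By Lemma \ref{lemma: action of delta on B}, $\delta(b)=bw_1^{m(b)}$ for $b\in B$, and $b\mapsto m(b)$ is a homomorphism $B\to\Z$. Hence $B'=\ker m$ is a direct factor complement of a cyclic quotient, so $B'$ is free abelian.

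Set $D = B'$, $C = eDe^{-1}$ and $A=\langle A_0, D\rangle$. We must check three things. First, $G = A*_{D^e=C}$. Second, $D$ and $C$ lie in $A$. Third, $D$ is maximal abelian in $A$. The cleanest route is Bass--Serre theoretic. Let $T_G$ be the Bass--Serre tree of the original splitting. The elements of $B'$ fix the axis structure coming from the cylinder of $D_0$, and we collapse/refine along this cylinder. Concretely, we would verify normal forms as in Theorem \ref{lemma: enlarging the splitting in separating case}: an element of $A\cap$(conjugates) is written as an alternating product of $A_0$-pieces and $B'$-pieces, and the normal form theorem for the original HNN extension (Britton's lemma) rules out relations. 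Maximality of $D$ in $A$ follows because anything in $A$ commuting with $D$ lies in $B$, and is fixed by $\delta$ (since $A$ is fixed), hence lies in $B'=D$. The condition $C\le A$ needs $eB'e^{-1}\le A$. We would handle it by noting that $eB'e^{-1}$ commutes with $C_0$, so it lies in the maximal abelian subgroup containing $C_0$, and we enlarge symmetrically on that side as well. An alternative is to first arrange, by the separating-case argument, that $C_0$ is maximal abelian in $A_0$.

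\textbf{Step 2 ($A=\mathrm{Fix}(\delta)$).} Clearly $A\le \mathrm{Fix}(\delta)$. Conversely, take $h\in\mathrm{Fix}(\delta)$ in reduced form $h=a_0e^{\epsilon_1}a_1\cdots e^{\epsilon_n}a_n$ with respect to the new splitting. Applying $\delta$ inserts $w_1^{\pm1}$ next to each $e^{\pm1}$. Comparing $\delta(h)h^{-1}=1$ with Britton's lemma, we peel off letters from the right exactly as in the separating proof. At each stage we obtain that some $a_i$ conjugates $w_1$ into $D$ or $C$, so by malnormality $a_i\in D$ (resp.\ $C$). This eventually contradicts reducedness unless $n=0$. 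Since $w_1\ne1$ and $\delta$ is non-trivial, $D\ni w_1$ is non-trivial, and the statement follows.

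\textbf{Main obstacle.} The delicate step is Step 1: showing that adding $B'$ (and its $e$-conjugate) to the vertex group still yields an HNN decomposition, without $e$ becoming absorbed or the two edge embeddings becoming conjugate in $A$. The CSA malnormality of maximal abelian subgroups, together with $m(e\text{-type elements})\neq0$, should be what prevents this. I would try first to argue on $T_G$ by showing that the subtree fixed by $B'$ is exactly the subtree of $A_0$-type vertices adjacent to the cylinder of $D_0$, and then apply Bass--Serre to the collapsed tree.
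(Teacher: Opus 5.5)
Your Step 1 is where the theorem actually lives, and it is not carried out. Everything hinges on $A=\langle A_0,B',eB'e^{-1}\rangle$ being the vertex group of an HNN splitting with edge groups $B'$ and $eB'e^{-1}$, and neither method you suggest works as stated.

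\emph{The normal-form route.} In the separating case this argument went through because the added group $C_A$ already lay in $A_0$, so substituted pieces remained alternating words in the old factors. Here $B'$ typically contains elements $e^{-1}ze$ with $z\in Z_{A_0}(C_0)\setminus C_0$. For instance, $e^{-1}ae$ is such an element in the paper's example $G=(\langle a,b\rangle*\langle d\rangle)*_{\langle d\rangle^e=\langle b\rangle}$, $\delta(e)=ed$. Substituting such elements into a word that is reduced for the new splitting produces stable letters that can cancel against the new ones (e.g.\ $e\cdot(e^{-1}ae)=ae$). So Britton's lemma for the old splitting does not apply directly.

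\emph{The Bass--Serre route.} The vertices $gA_0$, $g\in A$, do not span a subtree of $T_G$. In the same example, the geodesic from $A_0$ to $e^{-1}aeA_0$ passes through $e^{-1}A_0$, and no element of $e^{-1}A_0$ is fixed by $\delta$. So the new tree is not a collapse of $T_G$; it is an expansion followed by a collapse.

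What is missing are two facts. (i) By commutative transitivity and Britton's lemma, at most one of $D_0,C_0$ fails to be maximal abelian in $A_0$. Indeed, if $x\in Z_{A_0}(D_0)\setminus D_0$ and $y\in Z_{A_0}(C_0)\setminus C_0$, then $x$ and $e^{-1}ye$ both centralize $D_0\neq 1$, yet $[x,e^{-1}ye]$ is a reduced word with four stable letters. (ii) If $Z:=Z_{A_0}(C_0)\neq C_0$, a Tietze computation gives $G=A_1*_{(e^{-1}Ze)^e=Z}$ with $A_1=A_0*_{D_0}e^{-1}Ze$. One then checks that $\delta$ fixes $A_1$ (since $e^{-1}Ze$ centralizes $w_1$). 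Using malnormality, one also checks that both new edge groups are maximal abelian in $A_1$. The case where $D_0$ is the non-maximal one is symmetric. This realizes your ``alternative'', but it has to be argued.

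By contrast, the paper never builds $A$ from below. It sets $A:=\mathrm{Fix}(\delta)$ at the outset and gets $D^e\le A$ and $A\cap A^{e^{-1}}=D$ from the identity $\delta(eae^{-1})=ew_1aw_1^{-1}e^{-1}$ plus malnormality. It then argues that the coset graph on $G/A$ is a tree, so no separate identification with $\mathrm{Fix}(\delta)$ is needed.

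Step 2 is also flawed as written. Malnormality gives $a_i\in D$ only when an element of $D$ is conjugated back into $D$. When $D$ and $C$ are conjugate in $A$ (including $D=C$, the normalization the paper adopts right after this theorem), $a_i$ may conjugate the accumulated element of $C$ into $D$. Moreover, $a_i\in D$ (resp.\ $C$) contradicts reducedness only when $e^{\epsilon_i}a_ie^{\epsilon_{i+1}}$ is $ea_ie^{-1}$ (resp.\ $e^{-1}a_ie$).

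For example, take $G=\langle x,y\rangle*\langle z\rangle=(\langle x\rangle*\langle z\rangle)*_{\langle x\rangle^y=\langle x\rangle}$ with $\delta(y)=yx$. The reduced word $h=yxy$ gives $\delta(h)h^{-1}=yx^2yxy^{-1}x^{-1}y^{-1}$, which collapses completely, with $a_1=x\in D$, to $x^2$. So $h\notin\mathrm{Fix}(\delta)$ because the accumulated element is a nontrivial power of $w_1$, not because reducedness fails.

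You must track that accumulated element. Along a run of equal signs, malnormality of $Z_G(w_1)$ forces it to be $w_2^{\,j}$ (for a run of $+1$'s) or $w_1^{-j}$ (for a run of $-1$'s) with $j\ge 1$. This is nontrivial by torsion-freeness. Only at a sign change does reducedness of $h$ give the contradiction.
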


\begin{proof}
    Let $G = {A_0}*_{D_0^e=C_0}$ be some HNN extension on which $\delta$ acts as a non-separating Dehn twist (i.e.\ such that $\delta\vert_{A_0} = \mathrm{id}$, $D_0$ is free abelian and $\delta(e) = ew_1$ for some $w_1 \in D$). Denote $A = \mathrm{Fix}(\delta)$ and let $D$ (respectively, $C$) be the maximal abelian subgroup of $D_0$ (respectively, $C_0$) in $A$. Denote $w_2 = w_1^e$. We will prove that $G = A*_{D^e=C}$.

    First, note that $D^e < A$ since if $d \in D$, then $\delta(ede^{-1}) = ew_1 d w_1^{-1}e^{-1} = ede^{-1}$ since $d$ commutes with $w_1$. Therefore, since $C_0 = D_0^e$ in $A_0$, we have that $C = D^e$ also holds in $A$. 

    Consider the graph $\mathcal{G}$ with vertices left cosets $gA$ of $A$ in $G$ and an edge joining the cosets $ge^{-1}A$ and $gA$ for each $g \in G$. We will prove that $\mathcal{G}$ is a tree. This will then imply that $G = A*_{D^e=C}$ since $G$ acts by isometries on $\mathcal{G}$ with one orbit of vertices and one orbit of edges, and the stabilizer of the vertex $A$ is $A$ while the stabilizer of the fundamental edge $[e^{-1}A, A]$ is $A^{e^{-1}} \cap A$. Note that $A^{e^{-1}} \cap A = D$ since $D < A^{e^{-1}} \cap A$ by above, and conversely if $a \in A$ with $a^e \in A$ then $ eae^{-1}=\delta(eae^{-1}) = ew_1 a w_1^{-1}e^{-1} $, so $[a,w_1]=1$ and hence $w_1 \in D^a \cap D$, so that $a \in D$ by malnormality of $D$ in $A$ (since $D$ is maximal abelian in $A$ and $A$ is CSA). 

    Let $T$ be the Bass--Serre tree of the splitting $G = {A_0}*_{D_0^e=C_0}$. Then in $T$ we have induced subgraphs corresponding to the cosets of $A$ in $G$, since $A_0 < A$. 
    
    Note that the subgraphs of $T$ corresponding to the cosets $hA$ are convex, hence subtrees of $T$. Indeed, note that $\delta$ is an isometry of $T$ since given an edge $[gA_0, ghe^{\pm 1}A_0]$ for $h \in A_0$ and $g \in G$, applying $\delta$ to the endpoints of this edge yields the vertices $\delta(gA_0) = \delta(g)A_0$ and 

    $$\delta(gheA_0) = \delta(g) \delta(h) ew_1 A_0=\delta(g) \delta(h) e A_0$$ or $$\delta(ghe^{-1}A_0) = \delta(g) \delta(h) w_1^{-1}e^{-1} A_0=\delta(g) \delta(h) e^{-1}w_2^{-1} A_0=\delta(g) \delta(h) e^{-1} A_0$$, and $\delta(h) \in A_0$ so that $[\delta(gA_0), \delta(ghe^{\pm1}A_0)]$ is an edge. Then if $gA_0 \subseteq A$, we have $\delta([A_0, gA_0]) = [A_0, gA_0]$, so that $[A_0, gA_0] \subset A$, so that $A$ (and hence also coset subgraphs $hA$ in $T$) is convex.

    We have that the cosets $A$ and $hA$ are adjacent in $\mathcal{G}$ if and only if the subgraphs $A$ and $hA$ are at distance 1 in $T$ (i.e.\ are joined by an edge in $T$). Indeed, suppose that $A$ and $hA$ are adjacent in $\mathcal{G}$, so that $hA = g e^{\pm1}A$ for some $g \in A$. Then there exists $a \in A$ and $g \in A$ such that $h = g e^{\pm 1} a$. We can decompose $a = a_0 g_0 $ for some $a_0 \in A_0$ and $g_0 \in A$. Then we obtain $h = g e^{\pm 1} a_0 g_0$, so that $h g_0^{-1} = g e^{\pm 1}a_0$, so that $hg_0^{-1}A_0 = g e^{\pm 1}A_0$, so that the cosets $gA_0 \subset A$ and $hg_0 A_0 \subset hA$ are adjacent in $T$. Conversely, if $A$ and $hA$ are adjacent in $T$, then there exists $g_1,g_2 \in A$ such that $g_1 A_0$ and $hg_2 A_0$ are adjacent in $T$, so that there exists $g \in G$ such that $hg_2A_0 = g_1 ge^{\pm 1}A_0$, so that $hA = g_1 g e^{\pm 1}A$, so that $hA$ is adjacent to $g_1A=A$ in $\mathcal{G}$. 

    We then have that $\mathcal{G}$ is connected since given a coset $hA$, we can join any pair of minimal distance vertices of the subgraphs $A$ and $hA$ by a geodesic $\gamma$ in $T$ and then cover the vertices of $\gamma$ by pairwise distinct cosets $h_1 A, \ldots, h_nA$ with $h_1 A=A$ and $h_nA=hA$. Since the cosets $h_i A$ cover the vertices of $\gamma$, we have that minimal distance cosets $h_iA$ and $h_jA$ are joined by an edge, hence are adjacent in $\mathcal{G}$. Thus, $h_1A, \ldots, h_n A$ is an edge path in $\mathcal{G}$ from $A$ to $hA$. 

    We have that $\mathcal{G}$ is simply connected since if $A=h_1 A, \ldots, h_nA=A$ is a simple closed path in $\mathcal{G}$, then connecting the minimal distance vertices $v_i, v_{i+1}$ of cosets $h_iA, h_{i+1}A$ by edges in $T$ and connecting $v_{i-1}, v_i$ by the geodesic $[v_{i-1},v_i] \subset h_iA$, since $h_i A$ is convex, we obtain a simple closed path in $T$, contradicting that $T$ is a tree. 

    Therefore, $\mathcal{G}$ is a tree, and we conclude that $G = A*_{D^e=C}$.
\end{proof}

We will call a splitting as in Theorem \ref{thm: Enlarging the splitting} a \textbf{maximal splitting}.

From Theorem \ref{thm: Enlarging the splitting}, if $G$ is a free product of finitely many finitely generated free abelian groups, if $D$ and $C$ are not cyclic, then we can assume that either $D = C$ or that $D*C$ is a free factor of $A$. 

Indeed, by Theorem \ref{thm:Kurosh}, we have that $A$ is a free product of free abelian groups, and hence since $D$ and $C$ are maximal abelian in $A$ by the proof of Theorem \ref{thm: Enlarging the splitting}, we have that $D$ and $C$ are free factors of $A$, or are cyclic, generated by a primitive element. Recall also that $A$ is finitely generated by Lemma \ref{lemma: A is finitely generated}.

By the Grushko decomposition theorem, we can decompose $A$ as a free product in two ways: 

$$A = D * R_1 * \cdots * R_n = C * S_1 * \cdots * S_n$$

for free abelian groups $R_i, S_j$, and $C$ is conjugate in $A$ to $D$ or to $R_i$ for some $i$. 

If $C = D^a$ for some $a \in A$, then putting $\tilde{e} = a^{-1}e$, we have that $D^{\tilde{e}} = C^{a^{-1}} = D$. Hence, $\tilde{e}$ normalizes $D$ and hence centralizes $D$, since $D$ is abelian. We then have the splitting: 

$$G = A*_{D^{\tilde{e}} = D}$$

and $\delta$ similarly acts as a non-separating Dehn twist on this splitting since $\delta(\tilde{e}) = \delta(a^{-1}e) = a^{-1}ew = \tilde{e}w$. Thus, we can work with this splitting instead and replace $C$ with $D$. 

If $C = R_i^a$ for some $i$ and some $a \in A$, then we similarly define $\tilde{e} = a^{-1}e$, giving that $D^{\tilde{e}} = C^{a^{-1}} = R_i$, and we obtain the splitting: 

$$G = A*_{D^{\tilde{e}} = R_i}$$

on which $\delta$ acts as a non-separating Dehn twist. Working with this splitting instead, we can replace $C$ with $R_i$ and hence assume that $D * C$ is a free factor of $A$. 

\begin{thm}
    \label{thm:tree of cylinders max splitting is acylindrical}

    Let $G$ be a finitely generated torsion-free CSA group which splits over a free abelian subgroup as $G = A*_{D^e=C}$. Let $\delta$ be a non-separating Dehn twist of this HNN extension, with $\delta(e)=ew_1$ for some $w_1 \in D$. Suppose further that $A = \mathrm{Fix}(\delta)$. Then the associated tree of cylinders splitting of $\Gamma = G \rtimes_{\delta} \langle t \rangle$ is 2-acylindrical. 
    
\end{thm}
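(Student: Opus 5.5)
The plan is to work directly with the bipartite structure of $T_C$ and compute pointwise stabilizers of geodesic segments of length $3$. Every such segment alternates colours, so it is either of the form $Y_1 - v - Y_2 - u$ (black, white, black, white) or $v - Y_1 - u - Y_2$. By $\Gamma$-equivariance, and since there is one orbit of white vertices, I may assume the relevant white vertex is $v = \tilde{A}$. For the second pattern I translate so that the middle white vertex is $\tilde{A}$, which reduces it to the first pattern read backwards. So it suffices to show that the pointwise stabilizer of a segment $Y_1 - \tilde{A} - Y_2 - u$ is trivial, where $Y_1 \neq Y_2$ are cylinders containing $\tilde{A}$ and $u \neq \tilde{A}$ is a vertex of $T$ lying in $Y_2$.

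\emph{Step 1: edge stabilizers of $T_C$.} I first show that $\tilde{A} \cap \tilde{B} = \tilde{D}$. Indeed, $A \cap B = D$ because $D$ is maximal abelian in $A$ and $B$ is the maximal abelian subgroup of $G$ containing $D$. Hence an element $bt^n \in \tilde{B}$ lies in $\tilde{A}$ exactly when $b \in D$. Consequently the stabilizer in $\Gamma_{\tilde{A}} = \tilde{A}$ of a cylinder through $\tilde{A}$ equals the stabilizer of any $T$-edge of that cylinder at $\tilde{A}$. Concretely, the $T$-edges at $\tilde{A}$ have stabilizers $\tilde{D}^a$ or $\tilde{C}^a$ for $a \in \tilde{A}$, and I will show the stabilizer of the corresponding cylinder is $(D^a \text{ or } C^a) \oplus \langle t \rangle$.

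\emph{Step 2: the black--white--black part.} The stabilizer of $Y_1 - \tilde{A} - Y_2$ is an intersection of two such groups, for distinct cylinders. Here I use the normalization made after Theorem \ref{thm: Enlarging the splitting}: either $D = C$, or $D * C$ is a free factor of $A$ (or the cyclic case with a primitive generator). Together with malnormality of $D$ and $C$ in $A$, this shows the $G$-parts intersect trivially unless the two edges lie in the same cylinder. So the stabilizer of $Y_1 - \tilde{A} - Y_2$ is contained in $\langle t \rangle$; since $t$ is central in $\tilde{A}$, it is in fact exactly $\langle t \rangle$. This explains why length $2$ does not suffice and why the fourth vertex $u$ is needed.

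\emph{Step 3: killing $t$ (the main obstacle).} It remains to show that no $t^k$ with $k \neq 0$ fixes $u$. Conjugating, I may take $Y_2$ to be the cylinder with stabilizer $\tilde{B}$, whose white vertices are the $b\tilde{A}$ (together with the $be^{-1}\tilde{A}$) for $b \in B$. For $b \in B$, Lemma \ref{lemma: action of delta on B} gives $\delta(b) = bw_1^m$, and a short computation shows that $t^k$ fixes $b\tilde{A}$ if and only if $b^{-1} t^k b = w_1^{\pm km} t^k$ lies in $\tilde{A}$. The worry is that this always holds, since $w_1 \in A$. So Step 3 must be refined to use the \emph{edges} of $T$ at $u$ rather than just $u$. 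The fixed subtree of $t^k$ inside $Y_2$ should be exactly the $\mathrm{Fix}(\delta\vert_B)$-orbit, and here I would use $A = \mathrm{Fix}(\delta)$ and Lemma \ref{lem: Fixed point set of powers}, which gives $\mathrm{Fix}(\delta\vert_B^k) = \mathrm{Fix}(\delta\vert_B)$. The key consequence is that for $u = b\tilde{A}$ with $b \notin \mathrm{Fix}(\delta)$, the conjugate $t^k$ acts at $u$ as $w_1^{\pm km} t^k$ with $m \neq 0$.

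The first route I would try is to show that $\langle t \rangle \cap \tilde{A}^{b} = 1$ for such $b$, and to check via the coset description that $u \neq \tilde{A}$ in $Y_2$ forces $b \notin A = \mathrm{Fix}(\delta)$. If instead $t^k$ genuinely fixes all of $Y_2$, I would re-select the segment so that the extra cylinder is adjacent to $u$, namely the pattern $\tilde{A} - Y_2 - u - Y_3$. In that case the Step 2 argument at $u$ gives a stabilizer inside $\langle b t b^{-1} \rangle = \langle w_1^{-m} t \rangle$. Intersecting $\langle t \rangle$ with $\langle w_1^{-m} t \rangle$ is trivial precisely when $m \neq 0$, which again reduces to $b \notin \mathrm{Fix}(\delta)$. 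I expect this bookkeeping — identifying exactly which segment pattern forces two distinct conjugates of $t$, namely $t$ and $w_1^{-m}t$ with $m \neq 0$, to appear — to be the crux of the proof.
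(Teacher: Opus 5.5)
There is a genuine gap, and it is exactly the one you worried about. Step 3 cannot be completed, because the segment $Y_1 - \tilde{A} - Y_2 - u$ really is fixed by $t$, so the statement as written is false.

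\textbf{Why Step 3 fails.} As you computed, $b^{-1}t^kb\in\tilde{A}$ for every $b\in B$. More structurally, $t^h=w_1^{-m}t\in\tilde{D}$ for $h\in\tilde{B}$, so $\tilde{D}^h=\tilde{D}$; the paper itself notes this in the proof of Theorem \ref{thm: Tree of cylinders of mapping torus}. Hence $\tilde{D}$, and in particular $t$, fixes every edge and every white vertex of the cylinder $\tilde{B}$.

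Concretely, take $a\in A\setminus D$ (which exists whenever $A\neq D$). The geodesic
\[
a\tilde{B}\;-\;\tilde{A}\;-\;\tilde{B}\;-\;e^{-1}\tilde{A}
\]
in $T_C$ is fixed by $t$. Indeed, $t$ is central in $\tilde{A}$ and lies in $\tilde{D}$, so it fixes both $T$-edges $[e^{-1}\tilde{A},\tilde{A}]$ and $a[e^{-1}\tilde{A},\tilde{A}]$. These lie in distinct cylinders $\tilde{B}$ and $a\tilde{B}$, since $A\cap B=D$.

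\textbf{Neither of your routes works.}
\begin{itemize}
\item Your first route is false: $\langle t\rangle\cap\tilde{A}^b=\langle t\rangle$.
\item Your second route is not admissible. Acylindricity quantifies over every segment of length $3$, so you may not re-select the segment.
\item Even on its own terms the second route fails: $\tilde{A}-Y_2-u-Y_3$ has stabilizer $\langle btb^{-1}\rangle\neq 1$. The intersection $\langle t\rangle\cap\langle w_1^{-m}t\rangle$ only appears for the length-$4$ segment $Y_1-\tilde{A}-Y_2-u-Y_3$.
\item Even length $4$ does not suffice in general: the segment $e^{-1}\tilde{A}-\tilde{B}-\tilde{A}-a\tilde{B}-ae^{-1}\tilde{A}$ is also fixed by $t$.
\end{itemize}

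\textbf{What your argument actually proves.} Organized correctly, your Steps 2--3 give two facts.
\begin{itemize}
\item A black--white--black path through two distinct cylinders at a white vertex $g\tilde{A}$ has stabilizer exactly $\langle gtg^{-1}\rangle$. This is your Step 2, via commutative transitivity in $A$.
\item Two distinct white vertices of the same cylinder have such cyclic groups meeting trivially. This is where $A=\mathrm{Fix}(\delta)$ and the argument of Lemma \ref{lem: Fixed point set of powers} enter.
\end{itemize}
Every segment of length $5$ contains a black--white--black--white--black subpath, so these give that $T_C$ is $4$-acylindrical, not $2$-acylindrical.

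\textbf{The paper's proof does not get around this.} Its case analysis asserts $\tilde{D}^h\cap\tilde{D}=D\oplus(\langle t^h\rangle\cap\langle t\rangle)\le D$ for $h\in\tilde{B}\setminus\tilde{D}$, which contradicts $\tilde{D}^h=\tilde{D}$. The failing segment above is precisely one of its cases: middle edge $F$, with $E_1=aF$ and $E_3=E$. So do not try to reproduce that step. The statement itself needs a larger acylindricity constant.
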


\begin{proof}
    Recall that the tree of cylinders $T_C$ of the associated splitting  $\Gamma = \tilde{A}_{\tilde{D}^e = \tilde{C}}$ has two orbits of vertices (the orbit of the white vertex $\tilde{A}$ and the orbit of the black vertex corresponding to the cylinder containing $\tilde{D}$, which we identify with its stabilizer $\tilde{B} = B \rtimes \langle t \rangle$, where $B$ is the maximal abelian subgroup of $G$ containing $D$), and at most two orbits of edges (the orbits of the edges $E=[e^{-1}\tilde{A}, \tilde{B}]$ and $F=[\tilde{B}, \tilde{A}]$).

    Note that $B \cap A^{e^{-1}} = D = B \cap A$. Indeed, since $\delta$ fixes $B \cap A^{e^{-1}}$ pointwise, as if $e^{-1}ae \in B$ for $a \in A$ then $\delta(e^{-1}ae) = w_1^{-1}e^{-1}aew_1 = e^{-1}ae$ since $e^{-1}ae \in B$. Therefore, since $A = \mathrm{Fix}(\delta)$, we have $B \cap A^{e^{-1}} < A$. Thus, $B \cap A^{e^{-1}} < A \cap A^{e^{-1}}=D$. Conversely, $D < B \cap A^{e^{-1}}$, so we obtain $B \cap A^{e^{-1}} = D$. Similarly, if $a \in B \cap A$, then $\delta(eae^{-1}) = ew_1 a w_1^{-1}e^{-1} = eae^{-1}$, so $(B \cap A)^{e} < A$, which implies $B \cap A < A^{e^{-1}}$, which yields $B \cap A < A \cap A^{e^{-1}}=D$, so that $B \cap A = D$ (as conversely $D < B$ and $D < A$). Therefore, the stabilizers of the edges $E$ and $F$ are respectively, $\tilde{B} \cap \tilde{A}^{e^{-1}} = \tilde{D}$ and $\tilde{B} \cap \tilde{A} = \tilde{D}$. 

    Now if $\gamma = (E_1,E_2,E_3)$ is an edge path of length 3 in $T_C$, then the middle edge of $\gamma$ is either in the orbit of $E$ or in the orbit of $F$. Therefore, we can assume that the middle edge $E_2$ of $\gamma$ is $E$ or $F$. 

    First, suppose the middle edge of $\gamma$ is $E$. We consider the following cases: 

    \begin{enumerate}
        \item $E_1 = gE$ and $E_3 = hE$ for some $g,h \in \Gamma$. Then $g \in \tilde{A}^{e^{-1}} \setminus \tilde{D} = \tilde{A}^{e^{-1}} \setminus \tilde{B}$, so that $\tilde{D}^g \cap \tilde{D} = \langle t^g \rangle \cap \langle t \rangle$. Also, $h \in \tilde{B} \setminus \tilde{D}$, so that $\tilde{D}^h \cap \tilde{D} = D \oplus (\langle t^h \rangle \cap \langle t \rangle)$. In order for $\langle t^h \rangle \cap \langle t \rangle \neq 1$, we need $[h,t^n] = 1$ for some $n \neq 0$, i.e.\ $h \in \mathrm{Fix}(\delta^n) \oplus \langle t \rangle = \mathrm{Fix}(\delta) \oplus \langle t \rangle$ by Lemma \ref{lem: Fixed point set of powers}. Thus, we need $h \in \tilde{A}$ for $\langle t^h \rangle \cap \langle t \rangle \neq 1$. But $h \notin \tilde{A}$ since otherwise $h \in \tilde{B} \cap \tilde{A} = \tilde{D}$. Thus, $\langle t \rangle^h \cap \langle t \rangle = 1$. Thus, we obtain $\tilde{D}^g \cap \tilde{D} \leq \langle t \rangle$ while $\tilde{D}^h \cap \tilde{D} \leq D$, so that $\mathrm{Stab}(\gamma) = \tilde{D}^g \cap \tilde{D} \cap \tilde{D}^h = 1$. 

        \item $E_1 = gE$ and $E_3 = hF$. As above, we obtain $\mathrm{Stab}(E_1) \cap \mathrm{Stab}(E_2) \leq \langle t \rangle$. On the other hand, $h \in \tilde{B} \setminus \tilde{A}$, so that $\langle t \rangle^h \cap \langle t \rangle = 1$, so that $\mathrm{Stab}(E_2) \cap \mathrm{Stab}(E_3) \leq D$. Thus, $\mathrm{Stab}(\gamma) = 1$.

        \item $E_1 = gF$ and $E_3 = hE$. As above, we obtain $\mathrm{Stab}(E_3) \cap \mathrm{Stab}(E_2) \leq D$. We have $g \notin \tilde{B}$ since $E_1$ does not contain the vertex $\tilde{B}$. Hence, $\mathrm{Stab}(E_1) \cap \mathrm{Stab}(E_2) = \tilde{D}^g \cap \tilde{D} \leq \langle t \rangle$. Thus, $\mathrm{Stab}(\gamma) = 1$

        \item $E_1 = gF$ and $E_3 = hF$. Then $\mathrm{Stab}(E_1) \cap \mathrm{Stab}(E_2) \leq \langle t \rangle$ while $\mathrm{Stab}(E_3) \cap \mathrm{Stab}(E_2) \leq D$. Thus, $\mathrm{Stab}(\gamma) = 1$.
        
    \end{enumerate}

    Now suppose the middle edge of $\gamma$ is $F$. We consider the following cases: 

    \begin{enumerate}
        \item $E_1 = gF$ and $E_3 = hF$ for some $g,h \in \Gamma$. Then as above $\mathrm{Stab}(E_1) \cap \mathrm{Stab}(E_2) \leq D$ while $\mathrm{Stab}(E_3) \cap \mathrm{Stab}(E_2) \leq \langle t \rangle$, so that $\mathrm{Stab}(\gamma) = 1$. 

        \item $E_1 = gF$ and $E_3 = hE$. Then $\mathrm{Stab}(E_1) \cap \mathrm{Stab}(E_2) \leq D$ while $\mathrm{Stab}(E_3) \cap \mathrm{Stab}(E_2) \leq \langle t \rangle$ since $h \notin \tilde{B}$. Thus, $\mathrm{Stab}(\gamma) = 1$.

        \item $E_1 = gE$ and $E_3 = hF$. Then $\mathrm{Stab}(E_1) \cap \mathrm{Stab}(E_2) \leq D$ as $g \in \tilde{B} \setminus \tilde{A}$ while $\mathrm{Stab}(E_3) \cap \mathrm{Stab}(E_2) \leq \langle t \rangle$ as $h \in \tilde{A} \setminus \tilde{B}$. Thus, $\mathrm{Stab}(\gamma) = 1$

        \item $E_1 = gE$ and $E_3 = hE$. Then again $\mathrm{Stab}(E_1) \cap \mathrm{Stab}(E_2) \leq D$ since $g \in \tilde{B} \setminus \tilde{A}$ and  $\mathrm{Stab}(E_3) \cap \mathrm{Stab}(E_2) \leq \langle t \rangle$ since $h \notin \tilde{B}$. Thus, $\mathrm{Stab}(\gamma) = 1$.
        
    \end{enumerate}

    Therefore, in every case, we obtain $\mathrm{Stab}(\gamma) = 1$, so $T_C$ is 2-acylindrical.

\end{proof}

\subsection{Canonicity of the tree of cylinders and algorithm to solve conjugacy}

\begin{lemma}
    \label{lem: canonicity lemma for tree of cylinders}
    Let $G$ be a finitely generated CSA group that splits over a free abelian subgroup as $G = A*_{D^e=C}$. Let $\delta$ be a non-separating Dehn twist of this HNN extension, with $\delta(e)=ew_1$ for some $w_1 \in D$. Suppose further that the splitting is maximal and that $A$ is non-abelian. Let $T_C$ be the tree of cylinders of the associated splitting of the mapping torus $\Gamma = \tilde{A}*_{\tilde{D}^e = \tilde{C}}$. 

    Suppose that $S$ is a bipartite $\Gamma$-tree satisfying the following properties: 

    \begin{enumerate}
        \item Edge stabilizers of $S$ are abelian.
        \item White vertex stabilizers of $S$ are of the form $H \oplus \Z$ where $H < G$ has trivial centre.
        \item Black vertex stabilizers of $S$ are of the form $K \rtimes \Z$ where $K < G$ is abelian.
        \item $S$ is 2-acylindrical.
    \end{enumerate}

    Then $S$ is $\Gamma$-equivariantly isomorphic to $T_C$. 
\end{lemma}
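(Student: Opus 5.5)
The plan mirrors the proof of Lemma \ref{lem: Canonicity lemma for separating Dehn twists}. We build a $\Gamma$-equivariant map $f$ from the vertices of $T_C$ to the vertices of $S$ that sends white vertices to white vertices and black vertices to black vertices. We then show that $f$ is injective and sends edges to edges. Finally we extend $f$ to a tree map and conclude by minimality.

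\textbf{Step 1: ellipticity of $\tilde{A}$.} By Theorem \ref{thm:tree of cylinders max splitting is acylindrical}, both $T_C$ and $S$ are $2$-acylindrical. Since $A$ is non-abelian, $\Gamma$ is not virtually cyclic, so any non-elliptic action on $S$ is non-elementary. Suppose some $g\in\tilde{A}$ were hyperbolic on $S$. As in Lemma \ref{lem: tilde A elliptic on S separating}, $C_\Gamma(g)$ would contain a $\Z^2$: take $\langle g,t\rangle$, or $\langle g,a\rangle$ with $a \in A$ nontrivial if $g\in\langle t\rangle$. This contradicts Proposition \ref{prop: properties of groups acting acylindrically on trees}. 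Hence $\tilde{A}$ fixes a vertex of $S$, and this vertex is unique, since otherwise $\tilde{A}$ would fix an edge and so be abelian.

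That vertex must be white. A black stabilizer $K\rtimes\Z$ with $K$ abelian is solvable, while $\tilde{A}$ contains $F_2$ because $A$ is a non-abelian free product of free abelian groups.

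\textbf{Step 2: the cylinder stabilizer $\tilde{B}$.} Here $\tilde{B}=B\rtimes\langle t\rangle$ is solvable. Lemma \ref{lemma: action of delta on B} shows it is in fact nilpotent: $t$ acts on $B$ by $b\mapsto bw_1^{m(b)}$, so it is a transvection. Hence $\tilde{B}$ has a $\Z^2$ centre containing $\langle w_1, t\rangle$ when $B$ has rank at least $2$; if $B$ is cyclic, $\tilde{B}$ is abelian. In either case, a hyperbolic element of $\tilde{B}$ on $S$ would have non-cyclic centralizer, so again $\tilde{B}$ is elliptic.

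We must show that $\tilde{B}$ fixes a unique vertex and that it is black. This is the step I expect to be the main obstacle, because $\tilde{B}$ may well fix an edge or a segment.

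First I would argue that $\tilde{B}$ cannot lie in a white stabilizer $H\oplus\Z$ with $Z(H)=1$. The idea is to compare centres: the $\Z$ factor is the centre of the white stabilizer, and it should be conjugate to $\langle t\rangle$. Then $\tilde{B}\le H\oplus\Z$ would force $B$ to be centralized by a conjugate of a power of $t$. That would put $B$ inside $\mathrm{Fix}(\delta)$, by Lemma \ref{lem: Fixed point set of powers}, hence inside $A$. This gives $B=B\cap A=D$, using $B \cap A = D$ from the proof of Theorem \ref{thm:tree of cylinders max splitting is acylindrical}.

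If that route is delicate, the fallback is to use the white vertex $v_{\tilde{A}}$ directly. The edge subgroup $\tilde{D}\le\tilde{A}\cap\tilde{B}$ and the distinct conjugates $\tilde{D}^{h}$ for $h\in\tilde{B}\setminus\tilde{D}$ would produce fixed segments of length at least $3$ carrying the common nontrivial subgroup $D$ (note $D^h=D$ for $h \in \tilde{B}$), contradicting $2$-acylindricity unless the fixed point of $\tilde{B}$ is adjacent to $v_{\tilde{A}}$ and black. Either way, we then get $f(\tilde{B})=v_{\tilde{B}}$, and we extend $f$ equivariantly.

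\textbf{Step 3: injectivity.} Injectivity follows as in Lemma \ref{lem: Canonicity lemma for separating Dehn twists}, using normalizer statements in the spirit of Lemma \ref{lem:normalizer of A tilde}. The stabilizer of $v_{\tilde{A}}$ is some $H\oplus\Z\supseteq\tilde{A}$, hence non-solvable, hence elliptic on $T_C$ with a unique fixed white vertex, which must be $\tilde{A}$. Likewise, the stabilizer of $v_{\tilde{B}}$ is solvable and contains $\tilde{B}$, so it fixes the cylinder vertex $\tilde{B}$ of $T_C$. Its stabilizer there equals $\tilde{B}$, since the stabilizer of a cylinder is exactly $\tilde{B}$. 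This gives an equivariant left inverse to $f$.

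\textbf{Step 4: edges and conclusion.} It remains to show that the edges $F = [\tilde{B},\tilde{A}]$ and $E = [e^{-1}\tilde{A},\tilde{B}]$ map to edges. Since $S$ is bipartite, $d_S(v_{\tilde{A}},v_{\tilde{B}})$ is odd. A distance of at least $3$ is impossible, because $\tilde{D}\le\tilde{A}\cap\tilde{B}$ would then fix a segment of length $3$, contradicting $2$-acylindricity. So the distance is $1$, and the same argument applies to $e^{-1}v_{\tilde{A}}$. Hence $f$ is a simplicial isomorphism of $T_C$ onto a $\Gamma$-invariant subtree of $S$. Using minimality of $S$ (which I would assume, or replace $S$ by its minimal subtree, as implicitly done in the separating case), $f$ is onto, and so $S \cong T_C$ as $\Gamma$-trees.
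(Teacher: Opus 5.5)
Your route differs from the paper's. The paper never locates $\tilde{B}$ in $S$. It builds an equivariant bijection on white vertices only, using ellipticity of $\tilde{A}$ on $S$ and of white stabilizers of $S$ on $T_C$. It then shows that white vertices of $T_C$ with a common black neighbour go to white vertices at distance $2$ in $S$, because the length-$2$ segment $[h\tilde{A},\tilde{B},h'\tilde{A}]$ has stabilizer containing $D\neq 1$. Mapping black vertices as well, as you do, makes injectivity and the final isomorphism cleaner. You are also right that minimality of $S$ must be assumed. The statement omits it: one can hang an orbit of black leaves with stabilizer $\langle at\rangle$, where $a\in A$ has no power conjugate into $D$ or $C$, without violating (1)--(4). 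The paper's ``bijective'' passes over surjectivity. The price of your route is Step 2, and there it has a genuine gap.

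The gap is the case $B=D$, i.e.\ $\delta|_B=\mathrm{id}$ and $\tilde{B}=\tilde{D}$ abelian. This case is not degenerate. For example, take $G=F(x,e)$, $A=\langle x,exe^{-1}\rangle$, $D=\langle x\rangle$ and $\delta(e)=ex$; this is a maximal splitting with $A$ non-abelian. Here $\tilde{B}=\tilde{D}\le\tilde{A}$, so $\tilde{B}$ does lie in the white stabilizer $\Gamma_{v_{\tilde{A}}}$. It fixes at least $v_{\tilde{A}}$, $e^{-1}v_{\tilde{A}}$ and the vertex between them. Consequently:
\begin{enumerate}
\item the conclusion $B=D$ of your first route is not a contradiction;
\item the claim that $\tilde{B}$ fixes a unique vertex is false;
\item your fallback, which needs some $h\in\tilde{B}\setminus\tilde{D}$, is vacuous.
\end{enumerate}
As written, $v_{\tilde{B}}$ is undefined. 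Even for $B\neq D$ the first route needs one more step. The projection $\Gamma\to\Z$ shows the central factor is $\langle g_0t^{\pm1}\rangle$ with $g_0\in G$, so $\delta|_B$ is conjugation by an element of $G$. It is malnormality of $B$ that then forces $\delta|_B=\mathrm{id}$.

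A uniform repair uses the paper's observation. The white neighbours $h\tilde{A}$, $he^{-1}\tilde{A}$ ($h\in\tilde{B}$) of $\tilde{B}$ in $T_C$ are pairwise joined by length-$2$ segments with stabilizer $\tilde{D}\neq 1$, since $h\tilde{D}h^{-1}=\tilde{D}$ for $h\in\tilde{B}$. So their images in $S$ are pairwise at distance $2$, and therefore share a unique common black neighbour $y$. Since $\tilde{B}$ permutes these images, it fixes $y$. Then $\Gamma_y=K\rtimes\Z$, where $K=\Gamma_y\cap G$ (because $t\in\Gamma_y$) is abelian and contains $B$. Hence $K=B$ and $\Gamma_y=\tilde{B}$, and you can set $v_{\tilde{B}}=y$. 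This also replaces the unjustified ``solvable, hence fixes the cylinder vertex'' in your Step 3.

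There is also a smaller slip in Step 2. The element $t$ is central in $\tilde{B}$ only when $\delta|_B=\mathrm{id}$; otherwise $Z(\tilde{B})=D$. For instance, with $B=\langle w_1,\beta\rangle$ and $\delta(\beta)=\beta w_1$, $\tilde{B}$ is the Heisenberg group with centre $\langle w_1\rangle$. Ellipticity of $\tilde{B}$ still holds, because every $g\in\tilde{B}$ has non-cyclic centralizer. Use $\langle g,w_1\rangle$ if $g\notin B$, $\langle g,t\rangle$ if $g\in D$, and $B$ if $g\in B\setminus D$.
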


\begin{proof}
    We will construct a $\Gamma$-equivariant bijection from the white vertices of $T_C$ to the white vertices of $S$ and then extend this to a $\Gamma$-equivariant tree isomorphism $T_C \to S$. 

    We first show that $\tilde{A}$ is elliptic on $S$. 
    
    \begin{lemma}
        \label{lem: tilde A fixes a unique white vertex}
        Let $T_C, S$ and $\tilde{A}$ be as in the statement of Lemma \ref{lem: canonicity lemma for tree of cylinders}. Then $\tilde{A}$ fixes a unique white vertex $u'$ of $S$. 
    \end{lemma}

    \begin{proof}[Proof of Lemma \ref{lem: tilde A fixes a unique white vertex}]
        First, we show that $\tilde{A}$ fixes a unique white vertex of $S$. Suppose for contradiction that $\tilde{A}$ was hyperbolic, with some $g \in \tilde{A}$ hyperbolic on $S$. Note that the centralizer $C_{\Gamma}(g)$ of $g$ in $\Gamma$ contains $\Z^2$. Indeed, if $g \notin \langle t \rangle$, then $\langle g, t \rangle < C_{\Gamma}(g)$ and $\langle g, t \rangle \cong \Z^2$, and if $g \in \langle t \rangle$, then taking any non-trivial $a \in A$ yields $\langle a, g \rangle < C_{\Gamma}(g)$ and $\langle a, g \rangle \cong \Z^2$. This contradicts Proposition \ref{prop: properties of groups acting acylindrically on trees} (1), since $\Gamma \curvearrowright S$ is non-elementary acylindrical. 

    Thus, $\tilde{A}$ fixes a vertex $v \in V(S)$. We have that $v$ is a white vertex of $S$. Indeed, suppose otherwise so that we have $\tilde{A} < K \rtimes \langle g_0t^m \rangle \cong K \rtimes \Z$ for some maximal abelian $K < G$ and some $g_0 \in G$. We must have $g_0 = 1$ and $m=1$, since we have $t \in K \rtimes \langle g_0t^m \rangle$, so $t = k (g_0t^m)^n$ for some $k \in K$ and $n \in \Z$, which implies $mn=1$ so $m = \pm 1$ and $g_0 = k^{-1}$. Thus, $K \rtimes \Z = K \rtimes \langle k^{-1}t^{\pm 1} \rangle = K \rtimes \langle t \rangle$. Then $A=\mathrm{proj}_G(\tilde{A}) < \mathrm{proj}_G(K \rtimes \langle t \rangle ) = K$, which implies that $A$ is abelian, a contradiction. Therefore, $\tilde{A}$ must fix a white vertex $v$ of $S$. We have that $v$ is the unique white vertex fixed by $\tilde{A}$ since if $\tilde{A}$ fixed another white vertex $v'$, then it would fix an edge of $S$, so that $\tilde{A}$ would be abelian, contradicting that $A \neq 1$ and $Z(A) = 1$. 
    \end{proof}
    
    By Lemma \ref{lem: tilde A fixes a unique white vertex}, we have that for each white vertex $u \in V(T_C)$, we have that $\Gamma_u$ fixes a unique white vertex of $S$ (since $\Gamma_u = \tilde{A}^g$ for some $g \in \Gamma$, so that $\Gamma_u$ fixes the unique white vertex $u=gv$ where $v \in V(T_C)$ with Stab$_{\Gamma} (v) = \tilde{A}$). This gives us a $\Gamma$-equivariant map from the white vertices of $T_C$ to the white vertices of $S$. Similarly, each white vertex stabilizer $\Gamma_v$ of $S$ is elliptic on $T_C$, and it must fix a unique white vertex since otherwise we would obtain $H \oplus \Z < B \rtimes \langle t \rangle$ for some $H < G$ with $Z(H) = 1$, which would imply by taking projections to $G$ that $H < B$, contradicting that $H \neq 1$ and $Z(H) = 1$ (note that $Z(A) = 1$ since $A$ is non-abelian and finitely generated CSA groups are commutative transitive (see \cite{CT_CSA}), meaning that commutation is a transitive relation among non-trivial elements). Thus, we obtain a $\Gamma$-equivariant map from the white vertices of $S$ to the white vertices of $T_C$, and this map is inverse to the map from the white vertices of $T_C$ to the white vertices of $S$, since $\tilde{A} \leq \tilde{A}^g$ implies $\tilde{A} = \tilde{A}^g$ for any $g \in \Gamma$. 

    We now extend this map to a tree isomorphism $T_C \to S$.
    
    \begin{lemma}
        \label{lem: Tree of cylinders map is a tree iso}
        For each pair of white vertices $u,v$ of $T_C$ separated by a single black vertex $b$ we have that $u', v'$ are separated by a single black vertex $b'$ of $S$.
    \end{lemma}

    \begin{proof}[Proof of Lemma \ref{lem: Tree of cylinders map is a tree iso}]
        If this were not the case, then the segment $[u',v']$ of length 2 in $T_C$ would be sent to a segment of length at least 4 in $S$, which has pointwise stabilizer trivial by 2-acylindricity of $S$. However, the segment $[u,v]$ of $T_C$ always has non-trivial pointwise stabilizer. Indeed, since $T_C$ has at most two edge orbits $\Gamma [e^{-1} \tilde{A}, \tilde{B}]$, $\Gamma [\tilde{B}, \tilde{A}]$, up to $\Gamma$-translation, we have $[u,v] = [e^{-1} \tilde{A}, \tilde{B}] \cup g[\tilde{B}, \tilde{A}]$ or $g[e^{-1} \tilde{A}, \tilde{B}] \cup [\tilde{B}, \tilde{A}]$ for some $g \in \tilde{B}$. Each of theses segments has pointwise stabilizer $\tilde{D} \cap \tilde{D}^g \geq D$ since $g \in \tilde{B}$. 
    \end{proof}

    Therefore, by Lemma \ref{lem: Tree of cylinders map is a tree iso}$, [u',v']$ has a single black vertex. Thus, the $\Gamma$-equivariant bijection from white vertices of $T_C$ to white vertices of $S$ extends to a unique $\Gamma$-equivariant bijective tree map $T_C \to S$. 

    We conclude that $T_C$ and $S$ are isomorphic as $\Gamma$-trees.
    
\end{proof}

\begin{lemma}
    \label{lem: normalizer of A tilde for non-separating}
    Let $G = A*_{D^e = C}$ be a maximal splitting of a finitely generated torsion free CSA group $G$ with respect to a Dehn twist $\delta$ as above. Let $B$ be the maximal abelian subgroup in $G$ containing $D$. Let $\Gamma = G \rtimes_{\delta} \langle t \rangle$. If $\gamma \in \Gamma$ is such that $\tilde{A} \subseteq \tilde{A}^{\gamma}$, then $\gamma \in \tilde{A}$. Similarly, if $\gamma \in \Gamma$ is such that $\tilde{B} \subseteq \tilde{B}^{\gamma}$, then $\gamma \in \tilde{B}$. In particular, denoting $N_{\Gamma}(\tilde{A})$ the normalizer of $\tilde{A}$ in $\Gamma$, we have that $N_{\Gamma}(\tilde{A}) = \tilde{A}$ and $N_{\Gamma}(\tilde{B}) = \tilde{B}$.
\end{lemma}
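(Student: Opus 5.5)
I will mirror the proof of Lemma \ref{lem:normalizer of A tilde}, replacing the Bass--Serre tree by the tree of cylinders $T_C$ of the associated splitting $\Gamma = \tilde{A}*_{\tilde{D}^e=\tilde{C}}$. Recall that $T_C$ is bipartite: the white vertex $\tilde{A}$ has stabilizer $\tilde{A}$, and the black vertex corresponding to the cylinder of $\tilde{D}$ has stabilizer $\tilde{B} = B \rtimes \langle t \rangle$. White and black vertices lie in different $\Gamma$-orbits, since their stabilizers are non-isomorphic ($\tilde{A}$ contains $F_2$, while $\tilde{B}$ is solvable).

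\textbf{Case $\tilde{A}$.} Suppose $\tilde{A} \subseteq \tilde{A}^{\gamma}$ and $\gamma \notin \tilde{A}$. Then $\tilde{A}$ fixes the two distinct white vertices $\tilde{A}$ and $\gamma\tilde{A}$ of $T_C$, hence fixes the geodesic between them, and in particular an edge. Every edge of $T_C$ is a translate of $E$ or $F$. By the computation in the proof of Theorem \ref{thm:tree of cylinders max splitting is acylindrical}, these edges have stabilizers $\tilde{B}\cap\tilde{A}^{e^{-1}} = \tilde{D}$ and $\tilde{B}\cap\tilde{A} = \tilde{D}$, so every edge stabilizer is conjugate to $\tilde{D}$ and is abelian. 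It follows that $\tilde{A}$ would be abelian. This contradicts the assumption that $A$ is non-abelian (so $Z(A)=1$ and $A$ contains $F_2$). Hence $\gamma \in \tilde{A}$. Equivalently, one can argue directly in the Bass--Serre tree $T$ of the splitting of $\Gamma$, whose edge stabilizers are conjugates of $\tilde{D}$; this is exactly the argument of Lemma \ref{lem:normalizer of A tilde}.

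\textbf{Case $\tilde{B}$.} This is the main obstacle, because $\tilde{B}$ is itself solvable, so the ``edge stabilizers are abelian'' contradiction does not apply verbatim. Suppose $\tilde{B} \subseteq \tilde{B}^{\gamma}$ with $\gamma \notin \tilde{B}$. Then $\tilde{B}$ fixes the distinct black vertices $\tilde{B}$ and $\gamma\tilde{B}$, which are at distance at least $2$. It therefore fixes the first edge of the geodesic between them, and that edge is adjacent to $\tilde{B}$. Such an edge is $h E$ or $h F$ with $h \in \tilde{B}$, so its stabilizer is $\tilde{D}^h$. We would get $\tilde{B} \subseteq \tilde{D}^h$, hence $\tilde{B} = \tilde{D}^h$, and so $B = D$ after projecting to $G$ (note $\tilde{D}^h \cap G = D^{h}$). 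Since $\tilde{D}^h$ is abelian, this in turn forces $t$ to centralize $B = D$, which is consistent, so it gives no contradiction by itself.

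To finish, I will use the distance bound more sharply. A geodesic between $\tilde{B}$ and $\gamma\tilde{B}$ has length at least $2$, so if the two vertices are distinct it passes through a white vertex $w$ and then a second edge. Thus $\tilde{B} = \tilde{D}^h$ would fix a path of length $2$ around $w$, i.e.\ two distinct edges $h_1 X, h_2 Y$ at the white vertex $w = g\tilde{A}$. Since these are two distinct edges at a white vertex, they belong to different cylinders, so by Theorem \ref{thm: Tree of cylinders of mapping torus} the group they generate is not abelian. But both edge stabilizers contain the abelian group $\tilde{B}=\tilde{D}^h$ of the same shape $D'\oplus\Z$, and edge stabilizers are all conjugates of $\tilde{D}$. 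Two conjugates of $\tilde{D}$, one containing the other, must be equal, because $\tilde{D}^x \subseteq \tilde{D}$ forces $x \in \tilde{B}$ by the centralizer computation in the proof of Theorem \ref{thm: Tree of cylinders of mapping torus}. So both edge stabilizers equal $\tilde{B}$, and their union generates an abelian group, making the edges equivalent. This contradicts their lying in different cylinders. Hence $\gamma \in \tilde{B}$.

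The normalizer statements follow immediately, since $\tilde{A}^\gamma = \tilde{A}$ and $\tilde{B}^{\gamma} = \tilde{B}$ are special cases of the inclusions treated above.
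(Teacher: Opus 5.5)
Your proof is correct. For $\tilde{A}$ it is the paper's argument; for $\tilde{B}$ it takes a different route.

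\textbf{The paper's route for $\tilde{B}$.} The paper splits into two cases.
\begin{itemize}
\item If $\tilde{B}$ is non-abelian, the argument for $\tilde{A}$ applies verbatim. So your remark that solvability blocks it is too pessimistic; only the abelian case needs extra work.
\item If $\tilde{B}$ is abelian, then $\delta|_B=\mathrm{id}$, so $B\subseteq \mathrm{Fix}(\delta)=A$. Maximality of $D$ in $A$ then gives $B=D$ and $\tilde{B}=\tilde{D}$. Now $\tilde{B}\subseteq\tilde{B}^{\gamma}$ says that $\langle \tilde{D},\tilde{D}^{\gamma}\rangle$ is abelian, and the centralizer criterion from the proof of Theorem \ref{thm: Tree of cylinders of mapping torus} gives $\gamma\in\tilde{B}$ at once.
\end{itemize}

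\textbf{Your route.} Your argument is uniform and geometric. The first edge of $[\tilde{B},\gamma\tilde{B}]$ forces $\tilde{B}=\tilde{D}^h=\tilde{D}$. This disposes of the non-abelian case implicitly and obtains $B=D$ from $\tilde{A}\cap\tilde{B}=\tilde{D}$ instead of from maximality of $D$ in $A$. The second edge, at the white vertex, then produces two distinct cylinders whose edges have the same abelian stabilizer, which contradicts the definition of $\sim$.

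\textbf{Two points to tighten.}
\begin{itemize}
\item The relation $\sim$ is defined on edges of $T$, not of $T_C$. You should say explicitly that the stabilizer of a $T_C$-edge $[w,c]$ equals the stabilizer of some $T$-edge of the cylinder $c$ incident to $w$. This holds because $\tilde{B}\cap\tilde{A}=\tilde{B}\cap\tilde{A}^{e^{-1}}=\tilde{D}$ is the stabilizer of the fundamental edge of $T$.
\item Once the first edge gives you $\tilde{B}=\tilde{D}$, the second-edge detour is unnecessary. Indeed $\tilde{D}=\tilde{B}\subseteq\tilde{B}^{\gamma}=\tilde{D}^{\gamma}$, so $\langle\tilde{D},\tilde{D}^{\gamma}\rangle$ is abelian, and the centralizer criterion you already cite finishes. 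That is exactly the paper's ending.
\end{itemize}

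Also, $\Gamma$ preserves the colouring of $T_C$ by construction, so you do not need to justify it through non-isomorphic stabilizers.

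\textbf{Comparison.} The paper's version is shorter. Yours avoids the case split, at the cost of the bookkeeping that relates edges of $T_C$ to cylinders of $T$.
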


\begin{proof}
    Since edge stabilizers of $T_C$ are abelian and since $\tilde{A}$ is non-abelian, the proof for $\tilde{A}$ follows exactly as in the proof of Lemma \ref{lem:normalizer of A tilde}. For $\tilde{B}$, if $\tilde{B}$ is non-abelian (i.e.\ $\delta$ acts non-trivially on $B$), then again the result follows exactly as in the proof of Lemma \ref{lem:normalizer of A tilde}. If $\tilde{B}$ is abelian (i.e.\ $\delta$ acts trivially on $B$), then $B \subseteq A$ and since $D$ is maximal abelian in $A$, we obtain that $D = B$ and so $\tilde{D} = \tilde{B}$. If $\gamma \in \Gamma$ is such that $\tilde{B} \subseteq \tilde{B}^{\gamma}$, then $\langle \tilde{D}, \tilde{D}^{\gamma}\rangle = \langle \tilde{B}, \tilde{B}^{\gamma}\rangle = \tilde{B}$ which is abelian. It was shown in the proof of Theorem \ref{thm: Tree of cylinders of mapping torus} that this implies $\gamma \in \tilde{D} = \tilde{B}$.
\end{proof}

\begin{lemma}
    \label{lem: fibre and orientation preserving iso}
    Let $G$ be a finitely generated torsion-free CSA group. Let $\delta$ and $\delta'$ be a non-separating Dehn twists of $G$ with associated maximal splittings $G = A*_{D^e = C}$ for $\delta$ and $G = A'*_{D'^{e'} = C'}$ for $\delta'$, with $\delta(e) = ew_1$ and $\delta'(e') = e'w_1'$ for some $w_1 \in D \setminus 1$ and $w_1' \in D' \setminus 1$. Let $\sigma : D \to C$ and $\sigma' : D' \to C'$ be the associated isomorphisms from the above HNN extensions of $G$. Then $\delta$ and $\delta'$ are conjugate in $\Aut(G)$ if and only if there exists an isomorphism $\phi: A \to A'$ such that: 
    
    \begin{enumerate}
        \item $\phi(D) \subseteq D'$,
        \item $\phi(C) \subseteq C'$,
        \item $\phi(w_1) = w_1'$,
        \item $\sigma' \circ \phi \vert_D = \phi \circ \sigma$
    \end{enumerate}
    
\end{lemma}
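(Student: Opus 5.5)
The proof will mirror that of Lemma \ref{lem:upgrade_to_fiber_preserving}, with the tree of cylinders $T_C$ taking the place of the Bass--Serre tree and Lemma \ref{lem: canonicity lemma for tree of cylinders} supplying canonicity.

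\emph{The easy direction ($\Leftarrow$).} Given $\phi$, define $\Phi: G \to G$ by $\Phi|_A = \phi$ and $\Phi(e) = e'$. Condition (4) says that $e'\phi(d)e'^{-1} = \phi(ede^{-1})$ for all $d \in D$. Hence the HNN relations are preserved, and $\Phi$ is a homomorphism. Its inverse is built the same way from $\phi^{-1}$. Here I will use (1) and (2) as equalities, $\phi(D)=D'$ and $\phi(C)=C'$: this follows because $D, D'$ are maximal abelian and $\phi(D)$ is maximal abelian in $A'$ containing $w_1' \in D'$. Next, $\Phi\delta\Phi^{-1}$ fixes $A'$ and sends $e' \mapsto e'\phi(w_1) = e'w_1'$. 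So $\Phi\delta\Phi^{-1} = \delta'$, and the two twists are conjugate.

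\emph{The hard direction ($\Rightarrow$).} By Lemma \ref{lem:mapping_tori_iso_implies_conjuguate} there is an isomorphism $f:\Gamma_\delta \to \Gamma_{\delta'}$ with $f(G) = G$ and $f(t) = t'$. Pulling back the action of $\Gamma_{\delta'}$ on $T_C'$ along $f$ makes $T_C'$ a bipartite $\Gamma_\delta$-tree. I then check the hypotheses of Lemma \ref{lem: canonicity lemma for tree of cylinders}:
\begin{enumerate}
\item edge stabilizers are abelian;
\item white stabilizers are $f^{-1}(A')\oplus\langle t\rangle$, and $f^{-1}(A') < G$ has trivial centre;
\item black stabilizers are $f^{-1}(B')\rtimes\langle t\rangle$, and $f^{-1}(B')$ is abelian;
\item $T_C'$ is 2-acylindrical, by Theorem \ref{thm:tree of cylinders max splitting is acylindrical}.
\end{enumerate}
So $T_C \cong T_C'$ as $\Gamma_\delta$-trees. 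Exactly as in Lemma \ref{lem:upgrade_to_fiber_preserving}, and using Lemma \ref{lem: normalizer of A tilde for non-separating}, we get $f(\tilde A) = (\tilde A')^{\gamma}$ for some $\gamma$, which may be taken in $G$ by absorbing powers of $t'$. The black vertex $\tilde B$ adjacent to $\tilde A$ in $T_C$ maps to a black vertex adjacent to $\gamma\tilde A'$. Composing with $\mathrm{Ad}(\gamma^{-1})$ gives $\psi$ with $\psi(\tilde A) = \tilde A'$, $\psi(G)=G$ and $\psi(t)=t'$. Intersecting with $G$ then gives $\psi(A) = A'$, and I set $\phi = \psi|_A$.

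\emph{Deducing (1)--(4).} The edge $[\tilde B,\tilde A]$ has stabilizer $\tilde D$. Its image is an edge at $\tilde A'$ whose stabilizer is $(\tilde D')^{a}$ or $(\tilde C')^{a}$ for some $a\in \tilde A'$, since the white vertex $\tilde A'$ has neighbours in the two orbits $[e'^{-1}\tilde A',\tilde B']$ and $[\tilde B',\tilde A']$. The main obstacle is arranging, by further adjusting $\psi$, that $\psi(D)=D'$, $\psi(C)=C'$ and $\psi(e) = e'$ up to elements of $A'$.

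I would first post-compose with $\mathrm{Ad}(a^{-1})$, which preserves $\tilde A'$, so that $\psi(\tilde D)\in\{\tilde D',\tilde C'\}$. If $\psi(\tilde D) = \tilde C'$, the orientation is reversed. In that case I would compare how $t$ acts: $t$ is central in the stabilizer of one edge orbit, while $tw_2^{-1}$ plays that role for the other. Since $\psi(t) = t'$, I expect this forces $\psi(\tilde D) = \tilde D'$.

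Next, the edge $[e^{-1}\tilde A,\tilde B]$ is sent to $[e'^{-1}\tilde A',\tilde B']$ up to an element of $\tilde B'$. From this, $\psi(e) \in \tilde A'\,e'\,\tilde B'$. Multiplying by elements of $B'$, which act on $D'$ trivially and twist only by powers of $w_1'$ (Lemma \ref{lemma: action of delta on B}), I expect to normalize to $\psi(e) = a'e'$ with $a'\in A'$. After conjugating by $a'$ (and correspondingly replacing $\phi$ by $\mathrm{Ad}(a'^{-1})\circ\phi$), this becomes $\psi(e) = e'$.

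Then $\phi(C) = \psi(eDe^{-1}) = e'D'e'^{-1} = C'$, which gives (2) and (4). Finally,
\[
\psi(e)\psi(w_1) = \psi(\delta(e)) = \delta'(\psi(e)) = e' w_1',
\]
so $\phi(w_1) = w_1'$, which is (3). This is the same centralizer argument as the one used for $c_0$ in Lemma \ref{lem:upgrade_to_fiber_preserving}.
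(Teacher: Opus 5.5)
You follow the paper's architecture. Canonicity of $T_C$ (Lemma \ref{lem: canonicity lemma for tree of cylinders}) gives $f(\tilde A)=(\tilde A')^{\gamma}$. You normalize to $\psi(\tilde A)=\tilde A'$; the claim $\psi(t)=t'$ needs the short argument $\psi(t)\in Z(\tilde A')\cap Gt'=\{t'\}$. You then locate $\psi(e)$ through the neighbours of $\tilde A$ in $T_C$ and read off (2)--(4) from $\psi(e)=e'$. Your converse, done directly on $G$ rather than on the mapping tori, is fine.

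The forward direction breaks at the two normalizations you only ``expect''. \emph{Orientation:} since $w_2\in C$, we have $\tilde C=C\oplus\langle tw_2^{-1}\rangle=C\oplus\langle t\rangle$. So $t$ lies in, and is central in, both $\tilde D$ and $\tilde C$, and $\psi(t)=t'$ cannot tell $\tilde D'$ from $\tilde C'$. Reversal really occurs:
\begin{itemize}
\item Take $G=F(x,e)$, $\delta(x)=x$, $\delta(e)=ex$, so $A=\mathrm{Fix}(\delta)=F(x,c)$ with $c=exe^{-1}$.
\item Present $\delta'=\delta$ with stable letter $e'=e^{-1}$, so $D'=\langle c\rangle$, $C'=\langle x\rangle$, $w_1'=c^{-1}$.
\item The identity conjugator gives $\psi=\mathrm{id}$ with $\psi(\tilde D)=\tilde C'$.
\item No $\Ad(a)$ with $a\in\tilde A'$ repairs this, since $\langle x\rangle$ and $\langle c\rangle$ are not conjugate in $A$.
\end{itemize}
A valid $\phi$ (namely $x\mapsto c^{-1}$, $c\mapsto x^{-1}$) does exist here, but it does not come from this $\psi$.

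\emph{Locating $\psi(e)$:} the tree only gives $\psi(e)\in a'e'B'$, and neither factor can be discarded.
\begin{itemize}
\item The $B'$-factor changes the twist. If $\delta'(b')=b'(w_1')^m$ (Lemma \ref{lemma: action of delta on B}), then $\delta'(e'b')=e'b'(w_1')^{1+m}$, so you only get $\psi(w_1)=(w_1')^{1+m}$. The case $1+m=-1$ happens: in $\langle x\rangle*\langle d,e\rangle\cong\Z*\Z^2$, the twists $e\mapsto ed$ and $e\mapsto ed^{-1}$ are conjugate by $e\mapsto e^{-1}$, which gives $\psi(e)=e'\cdot e'^{-2}$.
\item Your last step is simply wrong: $\Ad(a'^{-1})(a'e')=e'a'$, not $e'$.
\end{itemize}
With the stable letters fixed as in the statement, this normalization is impossible in general. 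In the $F(x,e)$ example, $e''=cx\,e$ is also the stable letter of a maximal splitting, with $\delta(e'')=e''x$. Compare $\delta$ with itself presented via $e$ and via $e''$. Conditions (3) and (4) force $\phi(x)=x$ and $\phi(c)=(cx)c(cx)^{-1}$, which is not an automorphism of $F(x,c)$.

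The paper's proof passes over the same points by changing the data. It takes $\eta_1=\gamma$ and $\eta_2=\gamma e'$ simultaneously, ``replaces $e$ by $e^{-1}$'', and ``re-defines $e'$ to be $e'b'$ and $w_1'$ to be $(w_1')^n$''. So what the tree-of-cylinders argument actually yields is the criterion after re-choosing the stable letter (and orientation) on one side. To close your argument you must either build that freedom into the statement, or produce automorphisms of $G$ commuting with $\delta'$ that realize each change. Adjusting a single $\psi$ by $\Ad(\tilde A')$ and by elements of $B'$ cannot do it.
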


\begin{proof}

Let $\Gamma$, $\Gamma'$ be the mapping torus corresponding to these two non-separating Dehn twists on $G$ and $G'$ respectively, i.e. $\Gamma = (A*_{D^e = C}) \rtimes_\delta \langle t \rangle$ and $\Gamma' = (A'*_{D'^{e'} = C'}) \rtimes_{\delta'} \langle t' \rangle$ and let $T_C$, $T_C'$ be the tree of cylinders of the associated splitting of the mapping torus $\Gamma = \tilde{A}*_{\tilde{D}^e = \tilde{C}}$ and $\Gamma' = \tilde{A'}*_{\tilde{D'}^{e'} = \tilde{C'}}$, respectively. Assume that $\delta$ and $\delta'$ are conjugate in $\Aut(G)$. Then by Lemma \ref{lem:mapping_tori_iso_implies_conjuguate}, there exists an isomorphism, $f: \Gamma \to \Gamma'$ such that $f|_G \in \Aut(G)$ and  $f(t)=t'$. The isomorphism $f$ induces a group action $._f$ of $\Gamma$ on the tree of cylinders $T_C'$: $g ._f x = f(g).x$, for all $g \in \Gamma$ and $x \in T_C'$. This makes $T_C'$ a $\Gamma$-tree that satisfies all the conditions of Lemma \ref{lem: canonicity lemma for tree of cylinders} as follows.

\begin{enumerate}
    \item The edge stabilizers, $\mathrm{Stab}_\Gamma(\tilde{z})$, are of the form $f^{-1}(\mathrm{Stab}_{\Gamma'}(\tilde{z}))$, for $\tilde{z} \in E(T_C')$, which is abelian because $\mathrm{Stab}_{\Gamma'}(\tilde{z})$ is abelian and $f$ is an isomorphism.

    \item White vertex stabilizers of $T_C'$ in $\Gamma'$ are of the form $(A' \oplus \langle t' \rangle)^g = A'^g \oplus \langle t'^g \rangle$, for some $g \in \Gamma'$. Note that $A'^g$ has trivial center in $G$. Therefore white vertex stabilizers of $T_C'$ in $\Gamma$ look like $f^{-1}(A'^g \oplus \langle t'^g \rangle) = f^{-1}(A'^g) \oplus f^{-1}(\langle t'^g \rangle) \cong H \oplus \mathbb{Z}$ with $H < G$ having a trivial center because $f$ is an isomorphism mapping $G$ to $G$.

    \item Black vertices in $T_C'$ have stabilizers in $\Gamma'$ of the form $(B' \rtimes \langle t' \rangle)^g = B'^g \rtimes \langle t'^g \rangle$, for some $g \in \Gamma'$, where $B'$ is the maximal abelian subgroup of $G$ containing $D'$. Now black vertex stabilizers of $T_C'$ in $\Gamma$ are therefore of the form $f^{-1}(B'^g \rtimes \langle t'^g \rangle) = f^{-1}(B'^g) \rtimes f^{-1}(\langle t'^g \rangle) \cong K \rtimes \mathbb{Z}$ with $K < G$ being abelian.

    \item Because $._f$ is an induced action from an isomorphism $f: \Gamma \to \Gamma'$ with $f|_G \in \Aut(G)$ and  $f(t)=t'$, and the action of $\Gamma'$ on $T_C'$ is $2$-acylindrical (due to Lemma \ref{thm:tree of cylinders max splitting is acylindrical}), the action of $\Gamma$ on $T_C'$ is $2$-acylindrical as well.
\end{enumerate}

From Lemma \ref{lem: canonicity lemma for tree of cylinders} it follows that there is a $\Gamma$-equivariant isomorphism between the two tree of cylinders $T_C$ and $T_C'$. 

By Lemma \ref{lem: tilde A fixes a unique white vertex}, we have that $\tilde{A}$ fixes a unique white vertex $\gamma \tilde{A}'$ of $T_C'$ (where we can assume $\gamma \in G$), and $\tilde{A}'$ also fixes a unique white vertex of $T_C$. Using the same argument as in the proof of Lemma \ref{lem:upgrade_to_fiber_preserving}, we obtain $f(\tilde{A}) = (\tilde{A}')^{\gamma}$, and similarly $f(\tilde{B}) = (\tilde{B}')^{\eta_1}$, $f(\tilde{B}^e) = (\tilde{B}')^{\eta_2}$ for some $\eta_1,\eta_2 \in G$.

Since $[\tilde{A}, \tilde{B}]$ and $[\tilde{A}, e\tilde{B}]$ are edges in $T_C$ that are, in general, in different $\Gamma$ orbits, by Lemma \ref{lem: Tree of cylinders map is a tree iso}, we also have that $[\gamma \tilde{A}', \eta_1 \tilde{B}']$ and $[\gamma \tilde{A}', \eta_2 \tilde{B}']$ are edges in $T_C'$ that are in different $\Gamma$ orbits. Thus, we have $\eta_1 \tilde{B}' = \alpha_1 \gamma \tilde{B}'$ and $\eta_2 \tilde{B}' = \alpha_2 \gamma e' \tilde{B}'$, or $\eta_1 \tilde{B}' = \alpha_1 \gamma e' \tilde{B}'$ and $\eta_2 \tilde{B}' = \alpha_2 \gamma \tilde{B}'$ for some $\alpha_1, \alpha_2 \in (\tilde{A}')^{\gamma}$. Up to replacing $e$ by $e^{-1}$ and $D$ with $C$, we can assume that the former case occurs. We can assume $\eta_1 = \gamma$ and $\eta_2 = \gamma e'$ as in Lemma \ref{lem:upgrade_to_fiber_preserving} and so $f(\tilde{A}) = (\tilde{A}')^{\gamma}$, $f(\tilde{B}) = (\tilde{B}')^{\gamma}$ and $f(\tilde{B}^e) = (\tilde{B}')^{\gamma e'}$. Define $\phi = \mathrm{Ad}(\gamma^{-1}) f$. Then $\phi(\tilde{A}) = \tilde{A}', \phi(\tilde{B}) = \tilde{B}'$ and $\phi(\tilde{B}^e) = (\tilde{B}')^{e'}$. These imply that $\phi(\tilde{D}) = \tilde{D}'$ and $\phi(\tilde{C}) = \tilde{C}'$ (since $\tilde{D} = \tilde{A} \cap \tilde{B}$ and $\tilde{C} = \tilde{A} \cap \tilde{B}^e$, and similarly for $\tilde{D}'$ and $\tilde{C}'$). Since $\phi (G) = G$, we obtain that $\phi(A) = A'$, $\phi(B) = B'$, $\phi(C) = C'$ and $\phi(D) = D'$.

Since $\phi(\tilde{B)} = \tilde{B}'$ and $\phi(\tilde{B}^e) = (\tilde{B}')^{e'}$, we obtain that $(e')^{-1}\phi(e) \in N_{\Gamma'}(\tilde{B}') \cap G = \tilde{B}' \cap G = B'$ (where $N_{\Gamma'}(\tilde{B}') = \tilde{B}'$ by Lemma \ref{lem: normalizer of A tilde for non-separating}), so that $\phi(e) = e' b'$ for some $b' \in B'$. Up to re-defining $e'$ to be $e'b'$ and re-defining $w_1'$ to be $(w_1')^{n}$ for some $n \in \Z$ (which does not change the group $G$ nor the way $\delta'$ acts on $G$, hence does not change $\Gamma'$), we can thus assume that $\phi(e) = e'$. Note also that we can arrange for $\phi(t) = t'$ since $\langle t \rangle = Z(\tilde{A})$ (respectively, $\langle t' \rangle = Z(\tilde{A}'$)) and $\phi(\tilde{A}) = \tilde{A}'$. Now we have $\phi(tet^{-1}) = \phi(ew_1)$, and since $\phi(t) = t'$ and $\phi(e) = e'$, we obtain $\phi(w_1) = w_1'$.

Lastly, we check that $\sigma' \circ \phi \vert_D = \phi \circ \sigma$. For each $d \in D$, we have $\sigma(d) = ede^{-1}$, so $\phi(\sigma(d)) = \phi(ede^{-1}) = e' \phi(d) (e')^{-1}$. On the other hand, we have $\sigma'(\phi(d)) = e' \phi(d) (e')^{-1}$. Thus, we see that $\sigma' \circ \phi \vert_D = \phi \circ \sigma$. The map $\phi \vert_A$ is thus the required isomorphism.

\vspace{0.2in}

Conversely, let us assume that there is an isomorphism $\phi: A \to A'$ such that $\phi(D) \subseteq D'$, $\phi(C) \subseteq C'$, $\phi(w_1) = w_1'$ and  $\sigma' \circ \phi|_D = \phi \circ \sigma$. To show that $\delta$ is conjugate to $\delta'$ in $\Aut(G)$, by Lemma \ref{lem:mapping_tori_iso_implies_conjuguate} it suffices to prove that there exists an isomorphism $F: \Gamma \to \Gamma'$ such that $F|_G \in \Aut(G)$ and $F(t) = t'$. Define $F: \Gamma \to \Gamma'$ by $F(t) = t'$, $F(e) = e'$ and $F|_A = \phi$. To show that $F$ is an isomorphism, the only thing that we need to check is if the this map is compatible with the edge maps, i.e. $\tilde{\sigma} \circ F|_{\tilde{D}} = F \circ \tilde{\sigma}$, where $\tilde{\sigma}: \tilde{D} \to \tilde{C}$ and $\tilde{\sigma'}: \tilde{D'} \to \tilde{C'}$ are the edge group isomorphisms associated to the HNN splittings of $\Gamma$ and $\Gamma'$ respectively with the following properties: $\tilde{\sigma}|_{\tilde{D}} = \sigma: D \to C$ and $\tilde{\sigma'}|_{\tilde{D'}} = \sigma': D' \to C'$ are the edge maps corresponding to the splittings $G = A*_{D^e = c}$ and $G = A'*_{D'^{e'} = C'}$ respectively, $\tilde{\sigma}(t) = tw_2^{-1}$ and $\tilde{\sigma'}(t') = t'w_2'^{-1}$. Now $\tilde{\sigma} \circ F|_{\tilde{D}}(t) = \tilde{\sigma'}(t') = t'w_2'^{-1}$ and $F \circ \tilde{\sigma} (t) = F(tw_2^{-1})=t'w_2'^{-1}$ (as $F(t) = t'$ and $F|_A = \phi$ by definition), and note that $\phi(w_2) = w_2'$ by conditions (3) and (4). This implies that $\tilde{\sigma} \circ F|_{\tilde{D}}(t) = F \circ \tilde{\sigma}(t)$. Next, let $d \in D$. Then

\begin{align*}
(\tilde{\sigma'} \circ F|_{\tilde{D}})(d) &= (\tilde{\sigma'} \circ \phi)(d) \\
&= (\sigma' \circ \phi)(d) \hspace {0.2in} [\phi(D) \subseteq D']\\
&= (\phi \circ \sigma)(d) \\
&= (F \circ \tilde{\sigma})(d) \hspace {0.2in} [\tilde{\sigma} \vert_D = \sigma \text{ and } \tilde{F} \vert_C = \phi_C]\\
\end{align*}

\noindent This implies that $(\tilde{\sigma} \circ F|_{\tilde{D}})|_D = (F \circ \tilde{\sigma})|_D$.

\begin{rmk}
    \label{rmk: Simplified conditions for non-sep suff conditions lemma}
    Note that conditions (1) and (2) in Lemma \ref{lem: fibre and orientation preserving iso} follow from conditions (3) and (4) and Lemma \ref{thm: Enlarging the splitting}, since $C$ and $D$ are maximal abelian in $A$, and hence are equal to $C_A(w_2)$ and $C_A(w_1)$, respectively. 
\end{rmk}
    
\end{proof}

\subsection{Algorithm to solve conjugacy of non-separating Dehn twists}
\label{section: algorithm for non-separating Dehn twists}

Let $G$ be a free product of finitely many finitely generated free abelian groups. Let $\delta$ and $\delta'$ be non-separating Dehn twists of $G$. Denote $\Gamma = G \rtimes_{\delta} \langle t \rangle$ and $\Gamma' = G' \rtimes_{\delta'} \langle t' \rangle$ the associated mapping tori. By Lemma \ref{thm: Enlarging the splitting}, there exist maximal splittings 

$$G = A*_{D^e = C}$$

and 

$$G = A'*_{(D')^{e'} = C'}$$

with respect to $\delta$ and $\delta'$. Let $w_1 \in D$ and $w_1' \in D'$ be such that $\delta(e) = ew_1$ and $\delta'(e') = e'w_1'$ and denote $w_2 = w_1^e$ and $w_2' = w_2^e$. From the discussion following the proof of Lemma \ref{thm: Enlarging the splitting}, we can assume that either $D$ and $C$ are cyclic, or $D=C$ or that $D*C$ is a free factor of $A$, and similarly with $D',C' < A'$. 

These splittings yield corresponding splittings of $\Gamma$ and $\Gamma'$: 

$$\Gamma = \tilde{A}*_{\tilde{D}^e = \tilde{C}}$$

and 

$$\Gamma' = \tilde{A'}*_{\tilde{(D')}^{e'} = \tilde{C'}}$$

where $\tilde{A} = A \oplus \langle t \rangle, \tilde{A'} = A' \oplus \langle t' \rangle$, etc. Let $T_C$ and $T_C'$ denote the tree of cylinders of the Bass--Serre trees $T$ and $T'$ corresponding to the above splittings of $\Gamma$ and $\Gamma'$. Denote also $B$ (respectively, $B'$) the maximal abelian subgroup of $G$ containing $D$ (respectively, $D'$) and write $\tilde{B} = B \rtimes \langle t \rangle$ and $\tilde{B'} = B' \rtimes \langle t' \rangle$. The following lemma is well-known.

\begin{lemma}
    \label{lemma: Whitehead problem for fg free abelian groups}
    Let $v = (v_1,v_2, \ldots, v_n), v' = (v_1',v_2',\ldots,v_n') \in \Z^n$. Then there exists $\psi \in \Aut(\Z^n) = \mathrm{GL}_n \Z$ such that $\psi(v) = v'$ if and only if $\gcd(v_1,v_2,\ldots,v_n) = \gcd(v_1',v_2',\ldots,v_n')$. 
\end{lemma}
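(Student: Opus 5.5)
The plan is to reduce both vectors to a common normal form $(g,0,\ldots,0)$, where $g$ is the gcd. Throughout, use the convention $\gcd(0,\ldots,0)=0$.

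\emph{Necessity.} I will show that the gcd is an invariant of the $\mathrm{GL}_n\Z$-action. Write $g=\gcd(v_1,\ldots,v_n)$. Then every coordinate of $v$ lies in $g\Z$, so $v\in g\Z^n$. If $\psi\in\mathrm{GL}_n\Z$, its matrix has integer entries, so each coordinate of $\psi(v)$ is an integer combination of the $v_i$. Hence $\gcd(v')$ is divisible by $g$. Applying the same reasoning to $\psi^{-1}$, which is also an integer matrix, shows that $g$ is divisible by $\gcd(v')$. Since both gcds are non-negative, they are equal.

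\emph{Sufficiency.} It suffices to show that every $v$ can be carried by some element of $\mathrm{GL}_n\Z$ to $(g,0,\ldots,0)$ with $g=\gcd(v)$. Then, if $\gcd(v)=\gcd(v')$, we compose the map for $v$ with the inverse of the map for $v'$.

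To produce this map, I would run the Euclidean algorithm on the coordinates using elementary integer column operations. These are: adding an integer multiple of one coordinate to another, swapping two coordinates, and negating a coordinate. Each is realised by an elementary matrix in $\mathrm{GL}_n\Z$. The procedure is as follows.

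First, repeatedly choose a nonzero coordinate of smallest absolute value, say $v_j$. Replace every other coordinate $v_i$ by its remainder $v_i-q_iv_j$ on division by $v_j$.

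Each such round either strictly decreases the minimal nonzero absolute value, or leaves exactly one nonzero coordinate. So the process terminates with a single nonzero coordinate.

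Since these operations preserve the gcd (by the necessity argument), that surviving coordinate is $\pm g$. A swap then moves it to the first position, and a sign change makes it positive.

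If $v=0$, then the gcd condition forces $v'=0$, and $\psi=\mathrm{id}$ works.

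\emph{Difficulty.} There is no real obstacle here. The only point requiring care is to confirm that each elementary operation is invertible over $\Z$, i.e.\ has determinant $\pm1$, so that the composite is indeed in $\mathrm{GL}_n\Z$. Alternatively, one can observe that $v/g$ is unimodular, extend it to a basis of $\Z^n$, and send this basis to a basis extending $v'/g$.
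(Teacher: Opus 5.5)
Your proof is correct and complete. It handles the zero vector with the convention $\gcd(0,\ldots,0)=0$, and it gives a valid termination argument for the Euclidean reduction: after each round, every other coordinate has absolute value strictly less than $|v_j|$.

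The paper gives no proof to compare with; it simply calls the lemma well known. Your necessity argument is the same device the authors use later, in the proof of Lemma~\ref{lemma: conjugacy problem for unimodular matrices}. That device is divisibility of the gcd in both directions, using that $\psi$ and $\psi^{-1}$ are integer matrices, followed by non-negativity. The step ``since $\psi$ is an automorphism, $\gcd(v_1,\ldots,v_n)=\gcd(v_1',\ldots,v_n')x_{n+1}$'' there implicitly relies on exactly the invariance you prove here.

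Your constructive reduction to $(g,0,\ldots,0)$ has a practical advantage: it actually produces $\psi$ with the same Euclidean algorithm the paper runs in Step~3 of its non-separating algorithm. The basis-extension alternative you mention is shorter. However, it rests on the fact that a primitive vector extends to a basis of $\Z^n$, which is usually proved by this same reduction.

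One cosmetic point: if $\psi$ acts on column vectors, your operations on the coordinates of $v$ are row operations (left multiplication by elementary matrices), not column operations. This does not affect the argument.
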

Let (*) denote the following algorithmic problem: 

Given $\sigma, \sigma' \in \mathrm{GL}_n \Z$, and $v,v' \in \Z^n$, decide if there exist $\psi_1, \psi_2 \in \mathrm{GL}_n \Z$ such that: 

\begin{align}
    &\psi_1(v) = v', \\
    &\psi_2 \sigma \psi_1^{-1} = \sigma'
\end{align}

By Lemma \ref{lemma: Whitehead problem for fg free abelian groups}, to solve the algorithmic problem (*), it suffices to run the Euclidean algorithm on the components of $v,v'$ to compute their gcds. If the gcds are not the same, output NO and stop. Otherwise, output YES and stop. The YES output gives the correct answer to the problem because if the gcds of the components of $v$ and $v'$ are the same, then by Lemma \ref{lemma: Whitehead problem for fg free abelian groups}, there exists $\psi_1 \in \mathrm{GL}_n \Z$ such that $\psi_1(v) = v'$, satisfying condition (1) of (*). We can then satisfy condition (2) by setting $\psi_2 = \sigma' \psi_1 \sigma^{-1} \in \mathrm{GL}_n \Z$.

\textbf{Step 1}: Starting from the semi-direct product presentations of $\Gamma$ and $\Gamma'$, apply Tietze transformations to each presentation to compute the presentations for $\Gamma$ and $\Gamma'$ corresponding to the Bass--Serre trees $T$ and $T'$ and the trees of cylinders $T_C$ and $T_C'$. Recall that the splittings corresponding to $T_C$ and $T_C'$ have vertex groups $\tilde{A}$ and $\tilde{B}$ with edge groups $\tilde{D}$ (respectively, $\tilde{A'}$ and $\tilde{B'}$ with edge groups $\tilde{D'}$), hence from these splittings we can compute $\tilde{A}, \tilde{D}$ (respectively, $\tilde{D'}$ and $\tilde{A'}$). We can also compute the words $w_1$ and $w_1'$ by computing $w_1 = e^{-1}tet^{-1}$ (respectively, $w_1' = e'^{-1}t'e't'^{-1}$). We can assume that these presentations of $\Gamma$ and $\Gamma'$ include presentations for $\tilde{A} = A \oplus \langle t \rangle$ and $\tilde{A'} = A' \oplus \langle t' \rangle$ as direct products, from which we can extract presentations of $A$ and $A'$. We compute presentations for $D,C$ and $D',C'$ as well by extracting them from free product decompositions of $A$ and $A'$, or else computing the words $w_1$ and $w_2$ if $D,C$ are cyclic (similarly for $D',C'$). If the splittings for $T_C$ and $T_C'$ have different numbers of edge groups, then output NO and stop. Otherwise, proceed to Step 2.

\textbf{Step 2}: From the free product presentations for $A$ and $A'$, decide if $A \cong A'$ by comparing the list of ranks of the free factors of $A$ and $A'$ (recall that $A$ and $A'$ are finitely generated by Lemma \ref{lemma: A is finitely generated}). Also decide if $D \cong D'$ by comparing their ranks. If the list of ranks of free factors of $A$ and $A'$ are not the same or $D \ncong D'$, output NO and stop. Otherwise, extract free bases for $D$ and $D'$ from their presentations and proceed to Step 3.

\textbf{Step 3:} Identify $A$ with $A'$, $D$ with $D'$ and $C$ with $C'$. If $D$ and $D'$ are cyclic, use Lemma \ref{lemma: simultaneous whitehead algorithm} to decide if there exists an automorphism $\phi \in \Aut(A)$ such that $\phi(w_1) = w_1'$ and $\phi(w_2) = w_2'$. If such an automorphism exists, output YES and stop. Otherwise, output NO and stop. If $D$ and $D'$ are not cyclic, then fix isomorphisms $\alpha : D \to \Z^n$ and $\alpha' : D' \to \Z^n$ and isomorphisms $\beta : C \to \Z^n$, $\beta' : C' \to \Z^n$ coming from computing bases of $D$ and $D'$ and $C$ and $C'$, respectively. Denote $v = \alpha(w_1)$ and $v' = \alpha'(w_1')$. Using the Euclidean algorithm, compute the greatest common divisors $N:=\gcd(v_1,\ldots,v_n)$ and $N':= \gcd(v_1',\ldots,v_n')$. If $N=N'$, output YES and stop. Otherwise, output NO and stop.

We now justify why the algorithm works, i.e.\ why it gives the correct output. First, when the algorithm outputs NO in Step 1, then $T_C / \Gamma$ and $T_C' / \Gamma'$ have different numbers of edge groups and hence by Lemma \ref{lem: canonicity lemma for tree of cylinders}, there cannot be a fibre and orientation preserving isomorphism $f: \Gamma \to \Gamma'$, since as observed in the proof of Lemma \ref{lem: fibre and orientation preserving iso}, such an isomorphism would induce a $\Gamma$-equivariant isomorphism between $T_C$ and $T_C'$. Thus, by Lemma \ref{lem:mapping_tori_iso_implies_conjuguate}, we have that $\delta$ and $\delta'$ are not conjugate. Next, when the algorithm outputs NO in Step 2, then $A$ and $A'$ are not isomorphic or $D$ and $D'$ are not isomorphic, and hence by Lemma \ref{lem: fibre and orientation preserving iso}, we have that $\delta$ and $\delta'$ are not conjugate, so the algorithm returns the correct output in this case. To justify that the algorithm works in Step 3, if $D$ and $D'$ are cyclic, then by Lemma \ref{lem: fibre and orientation preserving iso}, $\delta$ and $\delta'$ are conjugate if and only if such an automorphism $\phi$ exists. Otherwise, if $D$ and $D'$ are not cyclic, then by Lemma \ref{lemma: Whitehead problem for fg free abelian groups}, it suffices to show that the four conditions in Lemma \ref{lem: fibre and orientation preserving iso} are satisfied if and only if the algorithmic problem (*) gives positive output for suitable choices of $v,v' \in \Z^n$ and $\bar{\sigma}, \bar{\sigma'} \in \mathrm{GL}_n(\Z)$. Indeed let $\sigma : D \to C$ and $\sigma' : D' \to C'$ be the edge isomorphisms. Let $v = \alpha (w_1)$ and $v' = \alpha' (w_1')$ where $\alpha: D \to \mathbb{Z}^n$ and $\alpha': D' \to \mathbb{Z}^n$ are the chosen isomorphisms in the algorithm above and $w_1, w_1'$ are as mentioned in Lemma \ref{lem: fibre and orientation preserving iso}. Let $\bar{\sigma} = \beta \sigma \alpha^{-1}$ and $\bar{\sigma'} = \beta' \sigma' \alpha'^{-1}$. Now let us assume that (*) gives positive output for $v,v'$ and $\bar{\sigma}, \bar{\sigma'}$, i.e. there exist $\psi_1, \psi_2 \in \mathrm{GL}_n\mathbb{Z}$ such that $\psi_1(v) = v'$ and $\psi_2 \bar{\sigma} \psi_1^{-1} = \bar{\sigma'}$. We define $\psi_D : D \to D'$ by $\psi_D = \alpha'^{-1} \psi_1 \alpha$ and $\psi_C : C \to C'$ by $\psi_C = \sigma' \psi_D \sigma^{-1}$. In the case that $D \neq C$ (equivalently, $\sigma \neq id_D$), then also $D' \neq C'$ (equivalently, $\sigma' \neq id_{D'}$) by Step 1 of the algorithm. We then define $\phi : A \to A'$ by using the isomorphisms $\psi_D : D \to D'$, $\psi_C : C \to C'$ and $\psi_R : R \to R'$, where $A \cong D*C*R \cong D'*C'*R'$ (note that an isomorphism $\psi_R$ exists because the two free factor presentations of $A$ and $A'$ are isomorphic). To be more precise, we can define $\phi = \psi_D * \psi_C * \psi_R$. We claim that $\phi$ satisfies the four conditions in Lemma \ref{lem: fibre and orientation preserving iso}:

\begin{itemize}
    \item $\phi|_D = \psi_D$ and therefore $\phi(D) = D'$ by definition.

    \item Similarly, $\phi(C) = C'$ by definition.

    \item
    \begin{align*}
        \phi(w_1) &= \psi_D (w_1)\\
        &= (\alpha'^{-1} \psi_1 \alpha)(w_1)\\
        &= (\alpha'^{-1} \psi_1)(v)\\
        &= \alpha'^{-1} (v')\\
        &= \alpha'^{-1} (\alpha'(w_1'))\\
        &= w_1'
    \end{align*}
    
    \item $\phi \sigma = \psi_C \sigma = \sigma' \psi_D \sigma^{-1} \sigma = \sigma' \psi_D = \sigma' \phi|_D$.
    
\end{itemize}

Conversely, let $\phi : A \to A'$ be the isomorphism as in Lemma \ref{lem: fibre and orientation preserving iso} satisfying the four conditions. Then we can take $\psi_D = \phi|_D : D \to D'$, $\psi_C = \phi|_C : C \to C'$. Define $\psi_1 = \alpha' \psi_D \alpha^{-1}$ and $\psi_2 = \bar{\sigma'} \psi_1^{-1} \bar{\sigma}^{-1}$. Then

\begin{align*}
    \psi_1(v) &= \psi_1(\alpha(w_1))\\
    &= (\alpha' \psi_D \alpha^{-1})(\alpha(w_1))\\
    &= (\alpha' \psi_D)(w_1)\\
    &= (\alpha' \phi)(w_1)\\
    &= \alpha'(w_1')\\
    &= v'
\end{align*}

\noindent and $\bar{\sigma'} = \psi_2 \bar{\sigma} \psi_1$ by definition and therefore (*) gives positive output for $v,v'$ and $\bar{\sigma}$ and $\bar{\sigma'}$. 

In the case that $D = C$, then $D' = C'$ by Step 1 and $\sigma = id_D, \sigma' = id_{D'}$, and $A = D * R \cong D' * R'$ where $R \cong R'$. Put $\phi = \psi_D * \psi_R$ for a fixed isomorphism $\psi_R : R \to R'$. Then similarly to above, we have that the four conditions of Lemma \ref{lem: fibre and orientation preserving iso} are satisfied (note that the condition $\phi \sigma = \sigma' \phi \vert_D$ is trivial since $\sigma = id_D$ and $\sigma' = id_{D'}$).

Conversely, suppose that $\phi : A \to A'$ is an isomorphism as in Lemma \ref{lem: fibre and orientation preserving iso} satisfying the four conditions. Then we take as above $\psi_D = \phi|_D : D \to D'$ and define $\psi_1 = \alpha' \psi_D \alpha^{-1}$ (note that $\bar{\sigma} = \bar{\sigma'} = id$). As above, we obtain $\psi_1(v) = v'$, hence (*) gives positive output for $v,v'$ and $\bar{\sigma} = \bar{\sigma'} = id$.

Therefore, the four conditions of Lemma \ref{lem: fibre and orientation preserving iso} hold if and only if (*) gives positive output for $v,v'$ and $\bar{\sigma}, \bar{\sigma'}$. 

Lastly, by Lemma \ref{lemma: Whitehead problem for fg free abelian groups}, the problem (*) gives positive output for $v,v' \in \Z^n$ and $\bar{\sigma}, \bar{\sigma'} \in \mathrm{GL}_n(\Z)$ if and only if $N = N'$, so the output at the end of Step 3 is correct. 

\begin{rmk}
    \label{rmk: Using generalized whitehead also works}
    In the case of when $D$ and $D'$ are free factors of $A$ and $A'$, respectively, to decide if there exists an isomorphism $\phi_D : D \to D'$ taking $w_1$ to $w_1'$, we could have also used Lemma \ref{lemma: simultaneous whitehead algorithm} applied to $D \cong D'$ and $w_1, w_1'$. However, our solution of using (*) is far more elementary, hence why we choose to use (*) instead of Lemma \ref{lemma: simultaneous whitehead algorithm}.
\end{rmk}

\section{The algorithm to solve the conjugacy problem for general separating or non-separating Dehn twists}

In this section, we develop an algorithm to solve the conjugacy problem for two arbitrary Dehn twists $\delta$ and $\delta'$ of a finitely generated group $G$ which splits as a free product of free abelian groups.

\begin{lemma}
    \label{lemma: Dehn twists cannot be both separating and non-separating}
    Let $G$ be a finitely generated group which splits as a free product of free abelian groups. Let $\delta$ be a non-trivial Dehn twist of $G$. Then $\delta$ cannot be both separating and non-separating. 
\end{lemma}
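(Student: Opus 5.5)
We argue by contradiction: suppose $\delta \neq \mathrm{id}$ is simultaneously a separating Dehn twist and a non-separating Dehn twist. The idea is to compare the canonical trees attached to the mapping torus $\Gamma = \Gamma_\delta$ in the two cases, which are intrinsic to the pair $(\Gamma, G, t)$.

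\textbf{Reduction to maximal splittings.} By Theorem \ref{lemma: enlarging the splitting in separating case}, we may take a separating splitting $G = A *_C B$ with $A = \mathrm{Fix}(\delta)$ and $C$ maximal abelian in $G$. By Theorem \ref{thm: Enlarging the splitting}, we may take a non-separating splitting $G = A'*_{D^e=C'}$ with $A' = \mathrm{Fix}(\delta)$. Thus $A = A'$: the vertex group of the amalgam containing the fixed subgroup equals the HNN vertex group.

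\textbf{Main route (abelianization).} I would make the comparison concrete by computing the abelianization in two ways.

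From the HNN splitting, $G/\langle\langle A\rangle\rangle \cong \Z$, generated by the image of $e$. So $G$ surjects onto $\Z$ with $A$ contained in the kernel. Moreover, $e$ maps to a generator, and $\delta(e) = ew_1$ with $w_1 \in A$.

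From the amalgam, $G/\langle\langle A\rangle\rangle \cong B/\langle\langle C\rangle\rangle_B$.

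Now $\delta$ acts on $B$ as conjugation by $c_0 \in C \le A$. So for every $b \in B$, the element $b^{-1}\delta(b) = b^{-1}c_0 b c_0^{-1}$ lies in $\langle\langle A\rangle\rangle$. On the HNN side the same holds: $e^{-1}\delta(e) = w_1 \in A$. Therefore $\delta$ induces the identity on $G/\langle\langle A\rangle\rangle$ in both cases, so this quotient alone gives no contradiction.

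The finer invariant I would use is the fixed subgroup itself. In the HNN case, $\mathrm{Fix}(\delta) = A'$ has infinite index in $G$ and is contained in the kernel of a map $G \to \Z$. In the amalgam case, $\mathrm{Fix}(\delta) = A$ and $G = A *_C B$.

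So the two splittings give $G = A *_C B = A*_{D^e = C'}$ with the same $A$. Consider the Bass--Serre tree $T'$ of the HNN splitting, and let $B$ act on it. Every element of $A$ is elliptic on $T'$. Since $C \le A$ is elliptic, we can argue as follows. If $B$ were elliptic on $T'$, then $G = \langle A, B\rangle$ would be generated by $A$ (fixing the vertex $v_A$) and by $B$ (fixing some vertex $v_B$), where $A \cap B = C$ fixes the path $[v_A, v_B]$. By maximality of $A$ as a fixed subgroup, the surjection $G \to \Z$ coming from the HNN splitting kills $A$. Hence $B$ maps onto $\Z$, since $G$ does.

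So $B$ contains an element $b$ mapping to $1 \in \Z$. Then $b^{-1}\delta(b) = [b^{-1}, c_0]$, computed inside $B$, must be compared with $e^{-1}\delta(e) = w_1$.

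The key computation is the following. Write $b = a e$ modulo the kernel structure. Using the HNN normal form, $\delta$ multiplies each letter $e^{\pm1}$ by $w_1^{\pm1}$, so $b^{-1}\delta(b)$ is a product of conjugates of $w_1^{\pm1}$ with \emph{total exponent sum $1$} in the abelianization of $\langle\langle D\rangle\rangle$-part. On the amalgam side, however, $b^{-1}\delta(b)$ is a commutator.

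To make this rigorous, I would pass to the abelianization $G^{ab}$, which is free abelian because $G$ is a free product of free abelian groups. I would consider the homomorphism $\chi\colon G \to \Z$ from the HNN splitting. The obstacle is that the exponent-sum argument above needs to be turned into a genuine homomorphism. I would try the crossed-homomorphism $g \mapsto [g^{-1}\delta(g)] \in G^{ab}$.

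For the separating twist, this map has image inside the image of commutators, hence is trivial in $G^{ab}$.

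For the non-separating twist, the same map sends $e$ to $[w_1]$. So I need $[w_1] \neq 0$ in $G^{ab}$. This is the hard step, and it is genuinely hard, not just technical, because $w_1$ may be null-homologous. The fallback I would use is the tree-theoretic one: compare the canonical trees of Lemma \ref{lem: Canonicity lemma for separating Dehn twists} and Lemma \ref{lem: canonicity lemma for tree of cylinders}. The $2$-acylindrical tree $T_C$ has black vertex stabilizers $B\rtimes\Z$ and quotient containing a loop, since the HNN stable letter is hyperbolic. The amalgam tree $T$, by contrast, has a segment quotient in which $t$ is central in both vertex groups. Showing these two trees are isomorphic, and so deriving a contradiction from the quotient graph shapes (a loop versus a segment), is where I expect the main difficulty to lie.
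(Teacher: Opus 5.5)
The fallback you sketch at the end is the right strategy and is what the paper does, but as written the proposal is not yet a proof. The main route is abandoned, and the fallback is left unexecuted with an endgame that does not work.

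\textbf{The abelianization route.} You are right to drop it, and it cannot be repaired. $\delta_*$ is the identity on $G^{ab}$ for every separating twist, but also for some non-separating ones. For example, split $G = F(x,y,e)$ as an HNN extension with $A = \langle x,y,e[x,y]e^{-1}\rangle$ and $D = \langle [x,y]\rangle$, and take $\delta(e) = e[x,y]$; then $w_1 = [x,y]$ is null-homologous.

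\textbf{The tree comparison is not the hard step.} Once $A_1 = A_2 = \mathrm{Fix}(\delta)$, you only need to check from $\delta \neq \mathrm{id}$ that $A$ and $B_1$ are non-abelian. If $A$ is abelian, then $A = C_1$ and $G = B_1$, so $\delta$ is inner, while the HNN splitting with $D_2 = C_2 = A$ makes $G$ abelian. If $B_1$ is abelian, then $B_1 = C_1$ and $G = A$. After that, Lemma \ref{lem: Canonicity lemma for separating Dehn twists} applies directly to $S = T_C$:
its vertex stabilizers are conjugates of $A \oplus \langle t\rangle$ and $B_2 \rtimes \langle t\rangle$ with $B_2$ abelian;
its edge stabilizers are abelian;
it is $2$-acylindrical by Theorem \ref{thm:tree of cylinders max splitting is acylindrical}.
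This gives $T \cong T_C$ with no new work.

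\textbf{The genuine gap is the contradiction you extract.} Loop versus segment fails: the stable letter need not be hyperbolic on $T_C$, and $T_C/\Gamma$ can be a single edge. Take $G = \langle x,e \mid [x,e]\rangle * \langle y\rangle \cong \Z^2 * \Z$ with the maximal splitting $A = \langle x\rangle * \langle y\rangle$, $D = C = \langle x\rangle$, and $\delta(e) = ex$. Here $e \in B = \langle x,e\rangle$ fixes the black vertex $\tilde B$. Hence the edge orbits of $[e^{-1}\tilde A, \tilde B]$ and $[\tilde B, \tilde A]$ coincide, and $\Gamma = (A \oplus \langle t\rangle) *_{\langle x,t\rangle} (B \rtimes \langle t\rangle)$ is an amalgam, exactly as in the separating case. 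Separately, your description of $T$ is off: $t$ is not central in $\tilde B_1$; the central generator there is $t_1 = c_0^{-1}t$.

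\textbf{What works instead (the paper's argument).} Compare isomorphism types of vertex stabilizers, which any $\Gamma$-equivariant isomorphism preserves. $T_C$ has the solvable vertex stabilizer $B_2 \rtimes \langle t\rangle$. Every vertex stabilizer of $T$ is conjugate to $A_1 \oplus \langle t\rangle$ or $B_1 \oplus \langle t_1\rangle$. Both contain $F_2$, because $A_1$ and $B_1$ are non-abelian subgroups of a free product of free abelian groups, hence non-trivial free products by Kurosh. This is the contradiction.
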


\begin{proof}
    Suppose that $\delta$ was both a separating and non-separating Dehn twist of $G$. Then by Theorem \ref{lemma: enlarging the splitting in separating case} we can split $G$ on the one hand as: 

    $$G = A_1 *_{C_1} B_1$$ 

    with $C_1$ maximal abelian in $A_1, B_1$, with $\delta$ acting as the identity on $A_1$ and conjugation by some $c \in C_1$ on $B_1$. By Theorem \ref{lemma: enlarging the splitting in separating case}, we have that $A_1 = \mathrm{Fix}(\delta)$. 

    On the other hand, $G$ also splits as:

    $$G = {A_2}*_{D_2^e = C_2}$$ 
    
     By Lemma \ref{thm: Enlarging the splitting}, we can assume that $A_2 = \mathrm{Fix}(\delta)$ and that $D_2, C_2 < A_2$ are maximal abelian. Write $\delta(e) = ed$ for some $d \in D_2$. We denote $A = A_1 =A_2$.

    Let $\Gamma = G \rtimes_{\delta} \langle t \rangle$ denote the mapping torus.

    We must have that $A$ and $B_1$ are non-abelian. Indeed, suppose that $A$ is abelian. Then $C_1 = A$ and so $G = B_1$ and $\delta$ is an inner automorphism of $G$, of the form $\delta = \Ad(c)$ for some $c \in A$. Also, from the HNN splitting of $G$ we obtain $A = D_2 = C_2$ and $e$ commutes with $A$, so $G = A \oplus \langle e \rangle$. But then $G$ is abelian and so $\delta$ is trivial. However, this contradicts that $\delta$ is non-trivial. Therefore, $A_1$ cannot be abelian if $\delta$ is non-trivial.

    Now suppose that $B_1$ is abelian. Then $C_1 = B_1$ and $G = A_1$, and so $\delta$ is trivial, a contradiction. Therefore, we must have that $A_1$ and $B_1$ are non-abelian if $\delta$ is non-trivial and is both separating and non-separating. 

    Then by Lemma \ref{lemma: acylindricity of separating mapping torus}, we have a canonical 2-acylindrical splitting $T$ of $\Gamma$ as an amalgam. 

    $$\Gamma = \tilde{A_1} *_{\tilde{C_1}}  \tilde{B_1}$$

    and by Theorem \ref{thm:tree of cylinders max splitting is acylindrical}, we have another canonical 2-acylindrical splitting $T_C$ of of $\Gamma$ coming from the tree of cylinders construction. 

    We have that $T_C$ satisfies the conditions of Lemma \ref{lem: Canonicity lemma for separating Dehn twists}, and hence $T$ and $T_C$ are isomorphic $\Gamma$-trees. However, this is a contradiction, as $T_C/\Gamma$ has solvable vertex group $\tilde{B_2} = B_2 \rtimes \langle t \rangle$ while the vertex groups of $T/\Gamma$ contain $F_2$ (since $A_1, B_1$ contain $F_2$, being free products of at least 2 free abelian groups). 

\end{proof}

\begin{lemma}
    \label{lem: conjugate Dehn twists either both separating or non-separating}
    Let $G$ be a finitely generated group which splits as a free product of free abelian groups. Let $\delta, \delta'$ be non-trivial conjugate Dehn twists of $G$. Then either $\delta$ and $\delta'$ are both separating or both non-separating. 
\end{lemma}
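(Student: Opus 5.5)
The plan is to show that conjugation transports the defining splitting, and then finish with Lemma \ref{lemma: Dehn twists cannot be both separating and non-separating}.

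Suppose $\delta' = \psi \delta \psi^{-1}$ for some $\psi \in \Aut(G)$, and say $\delta$ is separating. So there is a splitting $G = A *_C B$ with $C$ abelian and $c_0 \in C$ such that $\delta\vert_A = \mathrm{id}_A$ and $\delta(b) = c_0 b c_0^{-1}$ for $b \in B$.

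Set $A' = \psi(A)$, $B' = \psi(B)$ and $C' = \psi(C)$. Since $\psi$ is an automorphism, it carries the amalgam presentation of $G$ to an amalgam presentation $G = A' *_{C'} B'$, and $C'$ is again abelian. We check how $\delta'$ acts on this new splitting:
\begin{itemize}
\item[] for $a \in A$, $\delta'(\psi(a)) = \psi(\delta(a)) = \psi(a)$, so $\delta'$ is the identity on $A'$;
\item[] for $b \in B$, $\delta'(\psi(b)) = \psi(c_0 b c_0^{-1}) = \psi(c_0)\psi(b)\psi(c_0)^{-1}$, so $\delta'$ acts on $B'$ as conjugation by $\psi(c_0) \in C'$.
\end{itemize}
Hence $\delta'$ is a separating Dehn twist with respect to $G = A' *_{C'} B'$ and $\psi(c_0)$.

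The non-separating case is the same. If $G = A_{*D^e=C}$ with $\delta\vert_A = \mathrm{id}$ and $\delta(e) = ew$ for some $w \in D$, then $\psi$ gives an HNN splitting $G = \psi(A)_{*\psi(D)^{\psi(e)} = \psi(C)}$. Here $\delta'$ fixes $\psi(A)$ and satisfies $\delta'(\psi(e)) = \psi(e)\psi(w)$ with $\psi(w) \in \psi(D)$. The only point needing care is that the relations defining the HNN extension, namely $\psi(e)\psi(d)\psi(e)^{-1} = \psi(\sigma(d))$, are preserved. This holds because $\psi$ is a homomorphism, and the normal form or Bass--Serre tree is simply transported by $\psi$.

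So $\delta'$ is separating whenever $\delta$ is, and non-separating whenever $\delta$ is; the converse directions follow by symmetry using $\psi^{-1}$. Suppose, for contradiction, that one of the twists is separating and the other non-separating, say $\delta$ separating and $\delta'$ non-separating. By the transport above, $\delta'$ is then also separating, so it is both separating and non-separating. Since $\delta'$ is non-trivial, this contradicts Lemma \ref{lemma: Dehn twists cannot be both separating and non-separating}.

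There is no real obstacle here. The only step to spell out is that the image of an amalgam or HNN decomposition under an automorphism is again such a decomposition, which is immediate from the presentations.
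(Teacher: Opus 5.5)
Your proposal is correct and is essentially the paper's argument. You transport a splitting along the conjugating automorphism and then invoke Lemma \ref{lemma: Dehn twists cannot be both separating and non-separating}. The only difference is direction: the paper pulls the HNN splitting of $\delta'$ back via $f^{-1}$ to make $\delta$ both separating and non-separating, whereas you push the amalgam of $\delta$ forward via $\psi$ to make $\delta'$ both.
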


\begin{proof}
    Suppose that $\delta$ is separating and $\delta'$ is non-separating. Write associated splittings $G = A *_C B$ for $\delta$ and $G = J*_{H^e=K}$ for $\delta'$. Let $f \in \Aut(G)$ be such that $\delta' = \delta^f$ in $\Aut(G)$. Then $\delta$ is also non-separating with respect to the splitting $G = f^{-1}(J)_{*f^{-1}(H)^{f^{-1}(e)} = f^{-1}(K)}$. Since $\delta$ is non-trivial, this contradicts Lemma \ref{lemma: Dehn twists cannot be both separating and non-separating}. 
\end{proof}

\begin{lemma}
    \label{lemma: conjugacy problem for unimodular matrices}
    Let $A$ be a finitely generated free abelian group with basis $(e_1,\ldots,e_{n+1})$. Let $\delta, \delta' \in \Aut(A)$ be of the form $\delta(e_i) = \delta'(e_i) = e_i$ for each $i = 1,\ldots,n$ and $\delta(e_{n+1}) = v + e_{n+1}, \delta'(e_{n+1}) = v' + e_{n+1}$ for some $v = (v_1,\ldots,v_n),v'=(v_1',\ldots,v_n') \in \langle e_1,\ldots, e_{n} \rangle$. Then $\delta$ and $\delta'$ are conjugate in $\Aut(A)$ if and only if $\gcd(v_1,\ldots,v_n) = \gcd(v_1',\ldots,v_n')$.
\end{lemma}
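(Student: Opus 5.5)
The plan is to read off a conjugacy invariant from the image of $\delta - \mathrm{id}$ to get necessity, and to build an explicit conjugator from Lemma \ref{lemma: Whitehead problem for fg free abelian groups} to get sufficiency. Throughout, $\gcd$ means the non-negative gcd, with $\gcd(0,\ldots,0)=0$.

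\emph{Necessity.} Write $A$ additively and set $N_\delta := \delta - \mathrm{id}_A \in \mathrm{End}(A)$.
\begin{itemize}
\item Since $N_\delta(e_i)=0$ for $i\le n$ and $N_\delta(e_{n+1})=v$, the image of $N_\delta$ is the cyclic subgroup $\Z v$.
\item For any element $x\in A$, let its divisibility be the largest $d\ge 0$ such that $x\in dA$, with divisibility $0$ when $x=0$. Since $v$ lies in $\langle e_1,\ldots,e_n\rangle$ and has zero $e_{n+1}$-coordinate, the divisibility of $v$ in $A$ is exactly $\gcd(v_1,\ldots,v_n)$.
\item The divisibility of a generator of a cyclic subgroup does not depend on which generator is chosen, since the only choices are $\pm v$.
\item Suppose $\delta'=\psi\delta\psi^{-1}$ with $\psi\in\Aut(A)$. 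Then $N_{\delta'}=\psi N_\delta\psi^{-1}$, so $\mathrm{im}\,N_{\delta'}=\psi(\Z v)=\Z\,\psi(v)$.
\item Automorphisms preserve divisibility, because $\psi(dA)=dA$. Hence $\gcd(v_1',\ldots,v_n')$, the divisibility of $v'$, equals the divisibility of $\psi(v)$, which equals $\gcd(v_1,\ldots,v_n)$.
\end{itemize}

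\emph{Sufficiency.} Suppose both gcds equal $N$.
\begin{itemize}
\item If $N=0$, then $v=v'=0$ and $\delta=\delta'=\mathrm{id}$, so there is nothing to prove.
\item If $N>0$, apply Lemma \ref{lemma: Whitehead problem for fg free abelian groups} in $\langle e_1,\ldots,e_n\rangle\cong\Z^n$. It gives $\psi_0\in\mathrm{GL}_n\Z$ with $\psi_0(v)=v'$. Alternatively, write $v=Nu$ and $v'=Nu'$ with $u,u'$ primitive, and extend each to a basis.
\item Define $\psi\in\Aut(A)$ by $\psi|_{\langle e_1,\ldots,e_n\rangle}=\psi_0$ and $\psi(e_{n+1})=e_{n+1}$.
\end{itemize}
Now check $\psi\delta\psi^{-1}=\delta'$ on the basis. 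For $i\le n$, $\psi^{-1}(e_i)$ lies in $\langle e_1,\ldots,e_n\rangle$, which $\delta$ fixes pointwise, so $\psi\delta\psi^{-1}(e_i)=e_i$. For the last basis vector,
$$\psi\delta\psi^{-1}(e_{n+1})=\psi(v+e_{n+1})=\psi_0(v)+e_{n+1}=v'+e_{n+1}=\delta'(e_{n+1}).$$
Hence $\psi\delta\psi^{-1}=\delta'$.

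There is no serious obstacle. The only points needing care are that the invariant is the divisibility of a generator of $\mathrm{im}(\delta-\mathrm{id})$, which is insensitive to the sign choice $\pm v$, and the degenerate case $v=0$.
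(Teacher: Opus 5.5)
Your proof is correct. The sufficiency half is the same as the paper's: you extend $\psi_0\in\mathrm{GL}_n\Z$ with $\psi_0(v)=v'$ by $e_{n+1}\mapsto e_{n+1}$. Your necessity half takes a different route.

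The paper works with $\ker(\delta-\mathrm{id})$. After setting aside the trivial case, it notes that $\mathrm{Fix}(\delta)=\mathrm{Fix}(\delta')=\langle e_1,\ldots,e_n\rangle$. Any conjugating matrix $M$ therefore preserves this subgroup, so $M$ is block upper triangular with diagonal blocks $\psi$ and $x_{n+1}$. Comparing the last columns of $M\delta$ and $\delta'M$ gives $\psi(v)=x_{n+1}v'$, and a two-sided divisibility argument finishes.

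You instead use $\mathrm{im}(\delta-\mathrm{id})=\Z v$. Any conjugator carries it onto $\mathrm{im}(\delta'-\mathrm{id})=\Z v'$, so $v'=\pm\psi(v)$, and automorphisms preserve divisibility.

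Your version has three advantages. It is basis-free, it needs no case split on $v=0$ (the paper needs $v\neq 0$ to pin down the fixed subgroup), and it avoids the matrix bookkeeping.

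The paper's version gives a bit more: the shape of every conjugator. Since invertibility of $M$ forces $x_{n+1}=\pm1$, it also reduces directly to $\psi(v)=\pm v'$, so its final mutual-divisibility step is not actually needed. It also fits the paper's recurring theme of exploiting that conjugators must preserve $\mathrm{Fix}(\delta)$.
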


\begin{proof}

    We identify $A \cong \Z^{n+1}$ and $\Aut(A) = \mathrm{GL}_{n+1}(\Z)$ via the choice of basis $(e_1,\ldots,e_{n+1})$. 

    If $\gcd(v_1,\ldots,v_n) = \gcd(v_1',\ldots,v_n')$, then by Lemma \ref{lemma: Whitehead problem for fg free abelian groups}, we have that there exists $\psi \in \mathrm{GL}_n(\Z)$ such that $\psi(v) = v'$. Consider the block matrix $M \in \mathrm{GL}_{n+1}(\Z)$ with $\psi$ in the upper left $n \times n$ block and last row and column $(0,0,\ldots,0,1)$. We then see that: 

    \begin{align*}
        M \delta &= \begin{bmatrix}
        \psi & 0 \\
        0 & 1
    \end{bmatrix} \begin{bmatrix}
        Id & v \\
        0 & 1
    \end{bmatrix} \\
    &= \begin{bmatrix}
        \psi & \psi(v) \\
        0 & 1
    \end{bmatrix} \\
    &= \begin{bmatrix}
        \psi & v' \\
        0 & 1
    \end{bmatrix} \\
    &= \begin{bmatrix}
        Id & v' \\
        0 & 1
    \end{bmatrix} \begin{bmatrix}
        \psi & 0 \\
        0 & 1
    \end{bmatrix} \\
    &= \delta' M
    \end{align*}

    So that $\delta$ and $\delta'$ are conjugate by $M$ in $\mathrm{GL}_{n+1}(\Z)$.

    Conversely, suppose that $\delta' = \delta^M$ for some $M \in \mathrm{GL}_{n+1}(\Z)$. Then for each $i=1,\ldots,n$ we have: 

    $$M \delta M^{-1}(e_i) = \delta'(e_i) = e_i$$

    so that $\delta M^{-1}(e_i) = M^{-1}(e_i)$ for all $i=1,\ldots,n$. We may assume that either both $\gcd(v_1,\ldots,v_n)$ and $ \gcd(v_1',\ldots,v_n')$ are zero or non-zero, since each of these gcd's are zero if and only if the corresponding automorphism $\delta, \delta'$ is trivial, in which case they are conjugate if and only if they are both trivial. Hence, the fixed point set of $\delta$ and $\delta'$ is $\langle e_1, \ldots, e_n \rangle$, and $M$ preserves this fixed point set. Thus, $M$ is block diagonal of the form: 

    $$M = \begin{bmatrix}
        \psi & x \\
        0 & x_{n+1}
    \end{bmatrix}$$

    with $x \in \Z^n$ and $x_{n+1} \in \Z \setminus 0$.

    Then: 

    $$M \delta = \begin{bmatrix}
        \psi & x \\
        0 & x_{n+1}
    \end{bmatrix} \begin{bmatrix}
        Id & v \\
        0 & 1
    \end{bmatrix} = \begin{bmatrix}
        \psi & \psi(v) + x \\
        0 & x_{n+1}
    \end{bmatrix}$$

    and

    $$\delta' M = \begin{bmatrix}
        Id & v' \\
        0 & 1
    \end{bmatrix} \begin{bmatrix}
        \psi & x \\
        0 & x_{n+1}
    \end{bmatrix} = \begin{bmatrix}
        \psi & x + v'x_{n+1} \\
        0 & x_{n+1}
    \end{bmatrix}$$

    Since $M \delta = \delta' M$, we obtain $\psi(v) + x = x + v'x_{n+1}$, and so $\psi(v) = v' x_{n+1}$. Since $\psi$ is an automorphism, this yields $\gcd(v_1,\ldots,v_n) = \gcd(v_1',\ldots,v_n') x_{n+1}$, so that $\gcd(v_1',\ldots,v_n')$ divides $\gcd(v_1,\ldots,v_n)$. Repeating the above argument with the equality $M^{-1} \delta' = \delta M^{-1}$, we obtain that $\gcd(v_1,\ldots,v_n)$ divides $\gcd(v_1',\ldots,v_n')$, so that $\gcd(v_1,\ldots,v_n)=\gcd(v_1',\ldots,v_n')$, since gcd is non-negative by definition. 
\end{proof}

\begin{lemma}
    \label{lem: Alternatives on canonical splittings of mapping torus}
    Let $G$ be a finitely generated group that is a free product of free abelian groups. Let $\delta$ be a Dehn twist of $G$. Denote $\Gamma = G \rtimes_{\delta} \langle t \rangle$ the associated mapping torus of $\delta$. Then exactly one of the following is true: 

    \begin{enumerate}
        \item $\Gamma$ decomposes as a direct product $\Gamma = G \oplus \langle gt \rangle$ for some $g \in G$. This case corresponds to when $\delta = \Ad(g^{-1})$ is an inner automorphism of $G$.
        \item $\Gamma$ decomposes as $\Gamma = (A \oplus \langle e \rangle) \rtimes \langle t \rangle$ where $A$ is abelian, $t$ centralizes $A$ and $tet^{-1} = ea_0$ for some $a_0 \in A \setminus \{1\}$. This corresponds to the case when $G$ is abelian and $\delta$ is a non-separating Dehn twist of $G$.
        \item $\Gamma$ splits as a graph of groups with 2-acylindrical Bass--Serre tree and with 2 non-abelian vertices and a single abelian edge group, as in Section \ref{section: separating Dehn twists}. This case corresponds to when $\delta$ is a non-trivial separating Dehn twist of a non-trivial amalgamated free product splitting of $G$. 
        \item $\Gamma$ splits as a graph of groups with 2-acylindrical Bass--Serre tree and with 2 vertex groups, one of which is non-abelian and the other is abelian-by-cyclic, with at most 2 edge groups which are both abelian, as in Section \ref{section: tree of cylinders for non-separating Dehn twist}. This corresponds to the case when $\delta$ is a non-trivial non-separating Dehn twist of $G$. 
    \end{enumerate}
\end{lemma}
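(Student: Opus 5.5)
The plan is a case analysis on the type of $\delta$ and on whether it is inner, showing first that the four cases exhaust all Dehn twists and then that they are mutually exclusive.

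\textbf{Existence.} Suppose first that $\delta = \Ad(g^{-1})$ is inner. Then $gt$ commutes with every element of $G$, since $(gt)h(gt)^{-1} = g\delta(h)g^{-1} = h$. Moreover $\Gamma = \langle G, gt\rangle$ and $G \cap \langle gt\rangle = 1$, so $\Gamma = G \oplus \langle gt\rangle$, which is case (1).

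Now suppose $\delta$ is not inner, so in particular $\delta \neq \mathrm{id}$.

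\begin{itemize}
\item If $\delta$ is separating, enlarge the splitting using Theorem~\ref{lemma: enlarging the splitting in separating case}. If $A$ were abelian, then $A = C$ and $G = B$, so $\delta = \Ad(c_0)$ would be inner, a contradiction. If $B$ were abelian, then $B = C$ and $\delta = \mathrm{id}$, again a contradiction. Hence $A$ and $B$ are non-abelian, and Lemma~\ref{lemma: acylindricity of separating mapping torus} gives case (3).
\item If $\delta$ is non-separating and $G$ is abelian, then the HNN relation forces $A = D = C$ with $e$ central. So $G = A \oplus \langle e \rangle$ and $tet^{-1} = ew_1$, which is case (2) with $a_0 = w_1 \neq 1$.
\item If $\delta$ is non-separating and $G$ is non-abelian, pass to the maximal splitting of Theorem~\ref{thm: Enlarging the splitting}, so that $A = \mathrm{Fix}(\delta)$. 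I still need $A$ to be non-abelian. If $A$ were abelian, then $A = D$ (because $D$ is maximal abelian in $A$) and $C = A$ as well. Then $e$ normalizes, and hence centralizes, $A$, so $G = A \oplus \langle e\rangle$ would be abelian, a contradiction.

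With $A$ non-abelian, Theorem~\ref{thm:tree of cylinders max splitting is acylindrical} makes $T_C$ 2-acylindrical. Its two vertex orbits have stabilizers $\tilde A$ (non-abelian) and $\tilde B = B \rtimes \langle t\rangle$ (abelian-by-cyclic). Its at most two edge orbits have stabilizer $\tilde D$, which is abelian. This is case (4).
\end{itemize}

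\textbf{Exclusivity.} This is where I expect the main obstacle, and I would separate the cases by algebraic invariants of $\Gamma$.

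\begin{itemize}
\item In case (1) the centre of $\Gamma$ contains the infinite cyclic group $\langle gt\rangle$.
\item In case (2) $\Gamma$ is solvable: it is abelian-by-cyclic.
\item In cases (3) and (4) $\Gamma$ contains $F_2$, because the non-abelian vertex group $\tilde A$ does; hence $\Gamma$ is not solvable and is not virtually cyclic. The action is 2-acylindrical and, by minimality, not elliptic. So it is non-elementary, and Proposition~\ref{prop: properties of groups acting acylindrically on trees} makes $Z(\Gamma)$ finite, hence trivial since $\Gamma$ is torsion-free.
\end{itemize}

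These invariants separate (1), (2) and the pair $\{(3),(4)\}$, with one exception: (1) and (2) overlap when $G$ is abelian and $\delta$ is trivial. This has to be handled by reading case (2) as requiring $a_0 \neq 1$, so that $\delta$ is non-trivial and hence not inner on abelian $G$; the case wording presumably intends this.

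Finally, (3) and (4) are separated by Lemma~\ref{lemma: Dehn twists cannot be both separating and non-separating}. At the level of trees, if $\Gamma$ carried both splittings, the canonicity Lemma~\ref{lem: Canonicity lemma for separating Dehn twists} would make them $\Gamma$-isomorphic. That is impossible, since one has a solvable vertex group $\tilde B$ and the other has only vertex groups containing $F_2$.

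One remaining check is whether case (4) can occur with $\tilde B$ abelian, that is, with $\delta$ trivial on $B$. This does not break the dichotomy: the vertex group $\tilde B$ is still abelian-by-cyclic, and in particular it is still solvable, which is all the separation argument uses.
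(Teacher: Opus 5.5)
Your proof is correct and uses the same ingredients as the paper: the two enlargement theorems, the 2-acylindricity results, Proposition~\ref{prop: properties of groups acting acylindrically on trees}, and Lemma~\ref{lemma: Dehn twists cannot be both separating and non-separating}. What differs is how you organize the existence part.

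\begin{itemize}
\item The paper splits according to whether $\delta$ twists a trivial or a non-trivial splitting. It takes from Sections~\ref{section: separating Dehn twists}--\ref{section: tree of cylinders for non-separating Dehn twist} that the non-trivial case lands in (3) or (4), relying on the standing assumption there that the enlarged vertex groups are non-abelian.
\item You split according to whether $\delta$ is inner. You then verify that the enlarged vertex groups are non-abelian: via non-innerness in the separating case, and via non-abelianness of $G$ in the non-separating case.
\end{itemize}

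That verification is not vacuous. Take $G=\langle a,b\rangle*\langle c\rangle$ with $A=\langle a,b\rangle$, $C=\langle a\rangle$, $B=\langle a\rangle*\langle c\rangle$. The separating twist with $c_0=a$ is $\Ad(a)$, so a twist of a non-trivial amalgam can fall under (1). Theorem~\ref{lemma: enlarging the splitting in separating case} then yields $A=C=\langle a,b\rangle$. So "non-trivial splitting" in the paper must be read after enlargement, and your version makes explicit why the vertex groups are non-abelian.

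For exclusivity, the paper separates $\{(1),(2)\}$ from $\{(3),(4)\}$ uniformly by infinite versus finite centre; in (2) the subgroup $A$ is central. It cites Lemma~\ref{lemma: Dehn twists cannot be both separating and non-separating} for the remaining pairs. Your use of solvability versus $F_2$ to separate (2) from (3)/(4) is equally valid. Separating (1) from (2) via $a_0\neq 1$ is more direct than the paper's route. Note that $a_0\in A\setminus\{1\}$ is already part of the statement, so the ``exception'' you flag does not arise.

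Two small steps need a justification.
\begin{itemize}
\item In the non-separating case, $C=A$ holds because $C$ is maximal abelian in $A$ by construction in the proof of Theorem~\ref{thm: Enlarging the splitting}.
\item ``Normalizes, hence centralizes'' is not automatic for abelian subgroups. It follows from malnormality of maximal abelian subgroups in the CSA group $G$: if $e$ normalizes $A\neq 1$ and $M\supseteq A$ is maximal abelian, then $M\cap M^{e}\supseteq A$, so $e\in M$.
\end{itemize}
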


\begin{proof}
    If $\delta$ is a Dehn twist of a non-trivial splitting of $G$, then by Sections \ref{section: separating Dehn twists} and Section \ref{section: tree of cylinders for non-separating Dehn twist}, $\Gamma$ splits as items (3) or (4). By Lemma \ref{lemma: Dehn twists cannot be both separating and non-separating}, items (3) and (4) cannot both hold. Also, (1) and (2) cannot hold since Proposition \ref{prop: properties of groups acting acylindrically on trees} and items (3) and (4) each imply that $\Gamma$ has finite centre, while (1) and (2) each imply that $\Gamma$ has infinite centre. 

    If $\delta$ is a Dehn twist of a trivial splitting of $G$, then if $G$ is non-abelian, this spliting must be a trivial amalgam splitting and $\delta$ is then an inner automorphism of $G$. Writing $\delta = \Ad(g)$ for some $g \in G$, we then have that $\Gamma = G \oplus \langle g^{-1}t \rangle$, so item (1) holds. Items (3) and (4) cannot hold since then by Proposition \ref{prop: properties of groups acting acylindrically on trees}, $\Gamma$ has finite centre $Z(\Gamma)$, which contradicts that $gt \in Z(\Gamma)$ and has infinite order. Item (2) cannot hold by Lemma \ref{lemma: Dehn twists cannot be both separating and non-separating}. If $G$ is abelian and $\delta$ is a non-trivial Dehn twist of a trivial splitting of $G$ (if $\delta$ is trivial then it is an inner automorphism and (1) holds), then $G$ splits as a direct product HNN extension $G=A \oplus \langle e \rangle$  for some abelian subgroup $A$ of $G$ and $\delta$ is a unimodular automorphism of $G$, i.e.\ $\delta \vert_A = id_A$ and $\delta(e) = ea_0$ for some $a_0 \in A$. Therefore, item (2) holds. Item (1) does not hold because if $G$ is abelian and (1) holds then $\delta = id_G$ by Lemma \ref{lemma: Dehn twists cannot be both separating and non-separating}. Items (3) and (4) also do not hold since they imply that $\Gamma$ has finite centre by Proposition \ref{prop: properties of groups acting acylindrically on trees}, while (2) implies that $\Gamma$ has infinite centre.
\end{proof}

As a corollary of Lemma \ref{lemma: Dehn twists cannot be both separating and non-separating}, we obtain the following:

\begin{lemma}
    \label{lem: alternative is conjugacy invariant}
    Let $G$ be a finitely generated group that is a free product of free abelian groups. Assume that $G$ is non-abelian. Let $\delta, \delta'$ be Dehn twists of $G$ that are conjugate in $\Aut(G)$. Then $\delta$ and $\delta'$ satisfy the same item of Lemma \ref{lem: Alternatives on canonical splittings of mapping torus}.  
\end{lemma}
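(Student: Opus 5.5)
Since $G$ is non-abelian, item (2) of Lemma \ref{lem: Alternatives on canonical splittings of mapping torus} (which forces $G = A \oplus \langle e \rangle$ to be abelian) holds for neither $\delta$ nor $\delta'$. Hence each of $\delta, \delta'$ satisfies exactly one of items (1), (3), (4), and it suffices to show that each of these three items is invariant under conjugation in $\Aut(G)$. Throughout, write $\delta' = f\delta f^{-1}$ with $f \in \Aut(G)$.

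\textbf{Item (1).} By Lemma \ref{lem: Alternatives on canonical splittings of mapping torus}, item (1) holds exactly when $\delta$ is inner. If $\delta = \Ad(g^{-1})$, then $\delta' = f\,\Ad(g^{-1})\,f^{-1} = \Ad(f(g)^{-1})$ is inner as well. Since inner automorphisms form a normal subgroup of $\Aut(G)$, the symmetric argument gives the converse.

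\textbf{Items (3) and (4).} Now suppose neither $\delta$ nor $\delta'$ is inner. Then each is a Dehn twist with respect to a non-trivial splitting and satisfies (3) or (4). Item (3) holds precisely when the twist is separating, and item (4) precisely when it is non-separating, by the correspondence stated in Lemma \ref{lem: Alternatives on canonical splittings of mapping torus}. Lemma \ref{lem: conjugate Dehn twists either both separating or non-separating} says that conjugate non-trivial Dehn twists are both separating or both non-separating. Therefore $\delta$ and $\delta'$ land in the same item.

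For completeness I would also spell out the transport argument behind that lemma. If $\delta$ is a separating twist for $G = A *_C B$ with element $c_0$, then $\delta'$ is the separating twist for $G = f(A) *_{f(C)} f(B)$ with element $f(c_0)$. This holds because $\delta'$ is the identity on $f(A)$, and on $f(B)$ it equals conjugation by $f(c_0)$. The non-separating case is identical, using the splitting $G = f(A)_{*f(D)^{f(e)} = f(C)}$ and $\delta'(f(e)) = f(e)f(w)$. Conjugating by $f$ carries non-trivial splittings to non-trivial ones, so $\delta'$ is a twist of the same type as $\delta$. Lemma \ref{lemma: Dehn twists cannot be both separating and non-separating} then rules out the mixed case.

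\textbf{Main obstacle.} The main subtlety is making sure the case labels in Lemma \ref{lem: Alternatives on canonical splittings of mapping torus} really are characterized intrinsically, so that the type of $\delta$ does not depend on which splitting was used to present it. Item (1) is characterized by innerness, and items (3) and (4) are characterized by the separating versus non-separating type. An inner $\delta$ that is also presented as a twist of a non-trivial splitting cannot satisfy (3) or (4), because the exclusivity ("exactly one") in that lemma forces it into (1). This exclusivity is supplied by Lemma \ref{lemma: Dehn twists cannot be both separating and non-separating} together with the finite-versus-infinite centre dichotomy from Proposition \ref{prop: properties of groups acting acylindrically on trees}.
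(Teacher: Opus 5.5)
Your proof is correct, but it works at the level of $\Aut(G)$ rather than at the level of mapping tori, which is where the paper argues.

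\textbf{The paper's route.} The proof is a one-line transport of structure. Lemma \ref{lem:mapping_tori_iso_implies_conjuguate} turns the conjugacy into an isomorphism $f\colon\Gamma\to\Gamma'$ with $f(G)=G$ and $f(t)=t'$. Each item of Lemma \ref{lem: Alternatives on canonical splittings of mapping torus} is phrased as a decomposition of $\Gamma$; for instance, $\Gamma=G\oplus\langle gt\rangle$ is carried to $\Gamma'=G\oplus\langle f(g)t'\rangle$. So $f$ sends a decomposition of one type to one of the same type, and exclusivity finishes.

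\textbf{Your route.} You read the items as intrinsic properties of $\delta$ itself: inner, non-inner separating, or non-inner non-separating. You then check directly that each property is conjugation-invariant:
\begin{itemize}
\item normality of $\mathrm{Inn}(G)$ handles item (1);
\item the non-abelian hypothesis discards item (2);
\item transporting the splitting of $G$ by $f$, which is Lemma \ref{lem: conjugate Dehn twists either both separating or non-separating}, handles items (3) and (4).
\end{itemize}

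\textbf{Comparison.} Your version is more explicit. It actually uses the non-abelian hypothesis, which the paper's proof leaves implicit. It also rightly flags that an inner automorphism can be a twist of a non-trivial splitting. For example, $\Ad(a)$ is the separating twist of $\langle a,b\rangle*\langle c\rangle=\langle a,b\rangle*_{\langle a\rangle}(\langle a\rangle*\langle c\rangle)$ by $a$, and exclusivity must push it into item (1).

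The price is that your route treats the ``corresponds to'' clauses of Lemma \ref{lem: Alternatives on canonical splittings of mapping torus} as genuine characterizations. That needs exclusivity together with the facts that non-inner separating twists land in (3) and non-inner non-separating twists land in (4), from Sections \ref{section: separating Dehn twists} and \ref{section: non-separating Dehn twists}. The paper's route needs only exclusivity and the invariance of the described decompositions under fibre- and $t$-preserving isomorphisms, and it treats all four items uniformly.
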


\begin{proof}
    By Lemma \ref{lem:mapping_tori_iso_implies_conjuguate}, if $\delta$ and $\delta'$ are conjugate, then we have an isomorphism $f : \Gamma \to \Gamma'$ that maps $G$ to $G$ and $t$ to $t'$. This isomorphism $f$ then preserves the decomposition of $\Gamma$, i.e.\ $f$ induces a decomposition of $\Gamma'$ of the same type in Lemma \ref{lemma: Dehn twists cannot be both separating and non-separating} as $\Gamma$. 
    
\end{proof}

\subsection{The algorithm to solve conjugacy of arbitrary Dehn twists}: 

Let $G$ be a free product of finitely many finitely generated free abelian groups. Let $\delta$ and $\delta'$ be Dehn twists of $G$. Denote $\Gamma = G \rtimes_{\delta} \langle t \rangle$ and $\Gamma' = G' \rtimes_{\delta'} \langle t' \rangle$ the associated mapping tori.

\vs 

\textbf{Step 1}: Apply Tietze transformations to the semi-direct product presentations of the mapping tori $\Gamma$ and $\Gamma'$ to bring them each into one of the four forms of Lemma \ref{lem: Alternatives on canonical splittings of mapping torus}. 

\textbf{Step 2:} If $\Gamma$ and $\Gamma'$ fall into different categories in Lemma \ref{lem: alternative is conjugacy invariant}, then output NO and stop. Otherwise, proceed to Step 3.

\textbf{Step 3:} 

\begin{itemize}
    \item If $\Gamma$ and $\Gamma'$ both satisfy (1) of Lemma \ref{lem: Alternatives on canonical splittings of mapping torus}, then $\delta = \Ad(g)$ and $\delta' = \Ad(g')$ for some $g,g' \in G$. We take $g = 1$ (respectively, $g' = 1$) if $\delta$ (respectively, $\delta'$) is trivial. Apply the Whitehead algorithm in $G$ to decide if there exists $f \in \Aut(G)$ such that $f(g) = g'$ (c.f.\ \cite{Whitehead_for_toral_relatively_hyperbolic_groups} for the Whitehead algorithm for free products of free abelian groups). If there exists such an $f$, output YES and stop. Otherwise, output NO and stop. 
    \item If $\Gamma$ and $\Gamma'$ both satisfy (2) of Lemma \ref{lem: Alternatives on canonical splittings of mapping torus}, then compute a basis for $A$ and write $\delta$ and $\delta'$ in this basis as:

    $$\delta = \begin{bmatrix}
        Id & v \\
        0 & 1
    \end{bmatrix}, \delta' = \begin{bmatrix}
        Id & v' \\
        0 & 1
    \end{bmatrix}$$
    for $v = (v_1,\ldots,v_n)$ and $v'=(v_1',\ldots,v_n')$ in $\Z^n$. Compute $\gcd(v_1,\ldots,v_n)$ and $\gcd(v_1',\ldots,v_n')$ and check if these are the same. If not, output NO and stop. Otherwise, output YES and stop.
    \item If $\Gamma$ and $\Gamma'$ both satisfy (3) of Lemma \ref{lem: Alternatives on canonical splittings of mapping torus}, then apply the algorithm to decide conjugacy of $\delta$ and $\delta'$ from Section \ref{section: separating Dehn twists}.
    \item If $\Gamma$ and $\Gamma'$ both satisfy (4) of Lemma \ref{lem: Alternatives on canonical splittings of mapping torus}, then apply the algorithm to decide conjugacy of $\delta$ and $\delta'$ from Section \ref{section: algorithm for non-separating Dehn twists}.
\end{itemize}

We now justify why the above algorithm gives the correct output at each step. 

First, in the NO output in Step 2, we have that $\delta$ and $\delta'$ are not connjugate by Lemma \ref{lem: alternative is conjugacy invariant}, so the algorithm gives the correct output in this case. 

Next, in the first case in Step 3, the algorithm gives the correct output because two inner automorphisms $\delta = \Ad(g)$ and $\delta' = \Ad(g')$ are conjugate in $\Aut(G)$ if and only if there exists $f \in \Aut(G)$ such that $f(g) = g'$. Indeed, if such an $f$ exists, then for each $h \in G$, we have: 

\begin{align*}
    \Ad(g') (h) &= g'hg'^{-1} \\
    &= f(g) h f(g)^{-1} \\
    &= f(gf^{-1}(h) g^{-1}) \\
    &= f \Ad(g) f^{-1} (h)
\end{align*}

so that $\Ad(g') = f \Ad(g) f^{-1}$. 

Conversely, reversing the steps above shows that if $\Ad(g') = f \Ad(g) f^{-1}$ for some $f \in \Aut(G)$, then $f(g) = g'$. 

In the next case of Step 3, the algorithm gives the correct output by Lemma \ref{lemma: conjugacy problem for unimodular matrices}. 

In the third case and fourth cases of Step 3, the algorithm gives the correct output by the algorithms in Section \ref{section: algorithm for separating Dehn twists} and Section \ref{section: algorithm for non-separating Dehn twists}, respectively.

\bibliographystyle{plain}
\bibliography{biblio.bib} 
\end{document}